\documentclass[a4paper,12pt,times,preprint]{elsarticle}
\usepackage{amsmath,amssymb,amsthm,subfig,bm,units}
\usepackage[mathscr]{euscript}
\usepackage[referable]{threeparttablex}
\usepackage{fullpage}
\usepackage{grffile}
\usepackage{tikz}
\usetikzlibrary{arrows}
\usepackage{mdframed,booktabs,comment,rotating,multirow}
\usepackage[title,toc,titletoc,page]{appendix}
\usepackage[suffix=]{epstopdf}
\DeclareGraphicsExtensions{.png,.pdf}
\graphicspath{{pic/}}

\newcommand{\dd}{\mathrm d}
\newcommand\pdd[1]{\dfrac{\partial}{\partial {#1}}}
\newcommand\pd[2]{\dfrac{\partial {#1}}{\partial {#2}}}
\newcommand\od[2]{\dfrac{\mathrm{d} {#1}}{\mathrm{d} {#2}}}

\DeclareMathOperator{\tr}{tr}

\DeclareMathOperator{\sgn}{sgn}

\theoremstyle{definition}
\newtheorem{lemma}{Lemma}
\newtheorem{theorem}{Theorem}

\newtheorem{remark}{Remark}
\newcommand{\an}{\text{~~and~~}}

\title{A Robust Riemann Solver for Multiple Hydro-Elastoplastic Solid
  Mediums}

\author[pkuextra]{Ruo Li}
\ead{rli@math.pku.edu.cn}
\author[pkucoe]{Yanli Wang}
\ead{wangyanliwyl@gmail.com}
\author[pkumath,nint]{Chengbao Yao\corref{cor}}
\ead{yaocheng@pku.edu.cn}
\cortext[cor]{Corresponding author}
\address[pkuextra]{HEDPS \& CAPT, LMAM \& 
	School of Mathematical Sciences,
	Peking University, Beijing, China}
\address[pkucoe]{College of Engineering,
	Peking University, Beijing, China}
\address[pkumath]{School of Mathematical Sciences,
	Peking University, Beijing, China}
\address[nint]{Northwest Institute of Nuclear Technology, 
        Xi'an, China}

\begin{document}
\begin{abstract}
We propose a robust approximate solver for the hydro-elastoplastic solid
material, a general constitutive law extensively applied in explosion and 
high speed impact dynamics, and provide a natural transformation between the 
fluid and solid in the case of phase transitions. The hydrostatic components of 
the solid is described by a family of general Mie-Gr{\"u}neisen equation of 
state (EOS), while the deviatoric component includes the elastic phase, 
linearly hardened plastic phase and fluid phase. The approximate solver 
provides the interface stress and normal velocity by an iterative method. The 
well-posedness and convergence of our solver are proved with mild assumptions 
on the equations of state. The proposed solver is applied in computing the 
numerical flux at the phase interface for our compressible multi-medium flow 
simulation on Eulerian girds. Several numerical examples, including Riemann 
problems, shock-bubble interactions, implosions and high speed impact 
applications, are presented to validate the approximate solver.
 
\begin{keyword}
{Riemann solver, Mie-Gr{\"u}neisen, Hydro-elastoplastic solid, Multi-medium  
flow }
\end{keyword}
\end{abstract}

\maketitle

\section{Introduction}
Significant interest has arisen in the modeling and simulation of dynamic 
events that involve high-load conditions and large deformations, such as 
shock-driven motions, high-speed impacts, implosions, and so on. The numerical 
analysis of these problems demands the implementation of very specific 
capabilities that enable the simulation of multiple mediums and their 
interactions through accurate descriptions of boundary conditions and 
high-resolution shock and wave capturing. 

There are two typical frameworks to describe the motion of multi-medium flows
\cite{Benson1992}, that is, the Lagrangian framework and the Eulerian 
framework. In the Lagrangian framework, the equations for mass, momentum and 
energy conservations are solved using a computational mesh that conforms to the 
material boundaries and moves with particles \cite{Camacho1997adaptive, 
Bessette2003modeling}, which benefits from its simplicity and natural 
description of deformation, but suffers from mesh distortion when dealing with 
large deformation problems. In Eulerian framework the mesh is fixed in space, 
which makes these methods very suitable for flows with large deformations, such 
as Udaykumar \textit{et al.} \cite{Udaykumar1997, Tran2004, Udaykumar2003, 
Sambasivan2011, Sambasivan2013, Kapahi2013}, Liu \textit{et al.} \cite{Wang2009, 
Wang2007, Wang2010, Chen2012, Chen2010, Hua2011}, Mehmandoust \textit{et al.} 
\cite{Mehmandoust2009}, Sijoy \textit{et al.} \cite{Sijoy2015}, and so on. A 
typical procedure of multi-medium interaction in Eulerian grids mainly consists 
of two steps. The first step is the interface capture, including the diffuse 
interface method (DIM) \cite{Abgrall1996, Abgrall2001, Saurel1999, Saurel2009, 
Petitpas2009, Ansari2013}, and the sharp interface method (SIM), such as the 
volume of fluid (VOF) method \cite{Scardovelli1999, Noh1976}, level set method 
\cite{Sethian2001, Sussman1994}, moment of fluid (MOF) method \cite{Ahn2007, 
Dyadechko2008, Anbarlooei2009} and front-tracking method \cite{Glimm1998, 
Tryggvason2001}. The second step is the accurate prediction of the interface 
states, which can be used to stabilize the numerical diffusion in diffuse 
interface methods, and to compute the numerical flux and interface motion in 
sharp interface methods. One common approach is to solve a multi-medium Riemann 
problem which contains the fundamentally physical and mathematical properties of 
the governing equations and plays a key role in designing the numerical flux.

The solution of a multi-medium Riemann problem depends not only on the initial
states at each side of the interface, but also on the forms of constitutive 
relations. There exist some difficulties in the cases of real materials due to 
the high nonlinearity of the equation of state and non-conservation of the 
deviatoric evolution. A variety of methods to solve the corresponding Riemann 
problems have then been proposed. For example, Yadav \cite{Yadav1982converging} 
analyzed spherical shocks in metals by employing a hydrostatic Mie-Gr{\"u}neisen 
equation of state that does not consider the effects of shear deformation. Shyue 
\cite{Shyue2001} developed a Roe's approximate Riemann solver for the 
Mie-Gr{\"u}neisen EOS with variable Gr{\"u}neisen coefficient. Arienti 
\textit{et al.} \cite{Arienti2004} applied a Roe-Glaster solver to compute the 
equations combining the Euler equations involving chemical reaction with the 
Mie-Gr{\"u}neisen EOS. Lee \textit{et al.} \cite{Lee2013} developed an exact 
Riemann solver for the Mie-Gr{\"u}neisen EOS with constant Gr{\"u}neisen 
coefficient, where the integral terms are evaluated using an iterative Romberg 
algorithm. Banks \cite{Banks2010} and Kamm \cite{Kamm2015} developed a Riemann 
solver for the convex Mie-Gr{\"u}neisen EOS by solving a nonlinear equation for 
the density increment involved in the numerical integration of rarefaction 
curves. Unlike the fluid, there may exist more than one nonlinear wave in a 
solid when it undergoes an elastoplastic deformation, which will increase the 
difficulty to obtain the exact solution of the Riemann problem. Kaboudian 
\textit{et al.} \cite{Kaboudian2014} analyzed the elastic Riemann problem in the 
Lagrangian framework, and established the corresponding Riemann solver according 
to the characteristic theory. Xiao \textit{et al.} \cite{Xiao1996} raised an 
iterative procedure to solve the Riemann problem approximately by linearizing 
the Riemann invariants. Tang \textit{et al.} \cite{Tang1999} put forward a 
nearly exact Riemann solver for the perfectly elastoplastic solid based on the 
physical observation, where the Murnagham EOS and perfectly plastic model were 
chosen for the hydrostatic pressure and deviatoric stress respectively. 
Abouziarov \textit{et al.} \cite{Abouziarov2000} and Bazhenov \textit{et al.} 
\cite{Bazhenov2002} analyzed the structures of shock waves and rarefaction waves 
in an elastoplastic material on the assumption of barotropy, without taking into 
account the internal energy equation. Cheng \textit{et al.} \cite{Chen2012, 
Chen2010} analyzed the wave structures of one-dimensional elastoplastic flows 
and developed a two-rarefaction approximate Riemann solver. Menshov \textit{et 
al.} \cite{Menshov2014} provided an analysis of the Riemann problem in a 
complete statement for the perfect plasticity on the assumption of 
one-dimensional motion and uniaxial strain. Liu \textit{et al.} \cite{Liu2008, 
Liu2011}, Feng \textit{et al.} \cite{Feng2017} and Gao \textit{et al.} 
\cite{Gao2017, Gao2018} analyzed the exact solution of the elastic-perfectly 
plastic solid with the Murnagham EOS and stiffened gas EOS, and combined it with 
the modified ghost fluid method to solve multi-medium problems. Gavirilyuk 
\textit{et al.} \cite{Gavrilyuk2008} constructed a Riemann solver for the 
linearly elastic system of the hyperbolic non-conservative models with 
transverse waves. In addition, the elastic energy was included in the total 
energy, and an extra evolution equation, on the basis of Despres \textit{et al.} 
\cite{Despres2007}, was added in order to make the elastic transformation 
reversible in the absence of shock wave.

In this paper, we propose an approximate multi-medium Riemann solver
with a family of general Mie-Gr{\"u}neisen EOS and hydro-elastoplastic
deviatoric deformation, which can provide a smooth transformation
between the fluid and solid in the case of phase transitions. The
Riemann problem together with its approximate solver in such case,
which has not been well studied in the literature yet, can be applied
in the numerical scheme developed in \cite{Guo2016} conveniently. The
study we carried out here is a further exploration of our previous
work in \cite{Lichen2018}, which is restricted on the fluid-fluid
Riemann solver with Mie-Gr{\"u}neisen EOS. Similar to the solver in
\cite{Lichen2018}, some mild conditions on the coefficients of
Mie-Gr{\"u}neisen EOS are assumed to ensure the convexity of the
equation of state, which guarantees the existence and uniqueness of
the algebraic equation derived from the Riemann problem. The algebraic
equation is derived by a detailed analysis on the structure of the
Riemann fan. Then we solve the algebraic equation by an inexact Newton
method \cite{Dembo1982}, where the function and its derivatives are
evaluated approximately since the analytical expressions are not
available. The approximate evaluations of the function and its
derivatives are quite involved since they depend on the wave structure
and the error estimate in the run time. In spite of its complexity, we
find that the convergence of the inexact Newton iteration can be
achieved, which is significant to the success of large-scale
simulations in engineering applications. To validate the proposed
approximate Riemann solver, we employ it in the computation of
multi-medium compressible flows with Mie-Gr{\"u}neisen EOS and
elastoplastic deformation. The approximate solver developed here
enhances the capacity of the numerical scheme for our multi-medium
compressible fluid flows \cite{Guo2016, Lichen2018}, and allows us to
simulate the problems with highly nonlinear fluids and elastoplastic
solids.

The rest of this paper is arranged as follows. In Section \ref{sec:rp}, a
solution strategy for the multi-medium Riemann problem with Mie-Gr{\"u}neisen
EOS and hydro-elastoplastic deviatoric deformation is presented. In Section 
\ref{sec:aps}, the procedures of our approximate Riemann solver are outlined, 
and the well-posedness and convergence are analyzed. In Section \ref{sec:sch}, 
the application of our Riemann solver in multi-medium compressible flow 
calculations is briefly introduced. In Section \ref{sec:num}, several classical 
Riemann problems and applications for shock-bubble interaction, implosion and 
high speed impact problems are carried out to validate the accuracy and 
robustness of our schemes. Finally, a short conclusion is drawn in Section 
\ref{sec:conclusion}.


\section{Multi-medium Riemann Problem}\label{sec:rp}
The one-dimensional compressible multi-medium Riemann problem, in the absence  
of heat conduction and radiation, can be written as
\begin{equation}
  \dfrac{\partial \bm U}{\partial\tau}
  +\dfrac{\partial \bm F(\bm U)}{\partial\xi}
  =\bm 0, \quad 
 \bm U (\xi,\tau=0)=
   \begin{cases}
    \bm U_l, & \xi<0, \\
    \bm U_r, & \xi>0.
  \end{cases}
\label{system:oneriemann}
\end{equation}
Here $\tau$ is time, $\xi$ is spatial coordinate. $\bm U=[\rho, \rho u, E]^\top$ 
is the vector of conservative variables, and $\bm F(\bm U)=[\rho u, \rho 
u^2-\sigma, (E-\sigma)u]^\top$ is the corresponding flux. $\rho$, $u$ and $E$
are the density, velocity and total energy respectively, and $\sigma$ is the 
normal Cauchy stress of the hydro-elastoplastic solid.

To close the governing equations \eqref{system:oneriemann}, we need an equation 
of state or constitutive law to relate the thermodynamic variables. The 
hydro-elastoplastic model is a general form of nonlinear fluid, elasticity, 
perfect elastoplasticity and linearly hardened elastoplasticity, as a mix and 
match combination of isotropic models.

In the hydro-elastoplastic model, the deformation is decomposed into the 
volumetric deformation and shear deformation, and the Cauchy stress tensor 
$\bm{\sigma}$ is also divided into the hydrostatic pressure and deviatoric 
stress tensor respectively,
\[
\bm{\sigma} = -p{\bf I} + {\bf S},
\]
where $p$ is the hydrostatic pressure, $\bf S$ is the deviatoric stress tensor,  
and $\bf I$ is the unit tensor.

The hydrostatic pressure $p$ is expressed by the Mie-Gr\"uneisen EOS, which may  
be varied independently of the deviatoric response and has the following general 
form 
\begin{equation}
p(\rho, e)=\varGamma(\rho)\rho e + h(\rho),
\label{eq:particulareos}
\end{equation}
where $e$ is the specific internal energy, $\varGamma(\rho)$ is the  
Gr{\"u}neisen coefficient, and $h(\rho)$ is a reference state associated with 
the cold contribution resulting from the interactions of atoms at rest 
\cite{Heuze2012}. For the ease of our analysis, we impose on $\varGamma(\rho)$ 
and $h(\rho)$ the following assumptions

\bigskip
\textbf{(C1)} $\varGamma'(\rho) \le 0,~(\rho\varGamma(\rho))' \ge 
0,~(\rho\varGamma(\rho))''\ge 0$;
\medskip

\textbf{(C2)} $\lim\limits_{\rho\rightarrow+\infty}\varGamma(\rho)=
  \varGamma_\infty>0,~\varGamma(\rho)\le \varGamma_{\infty}+2$;
\medskip

\textbf{(C3)} $h'(\rho)\ge 0,~h''(\rho)\ge 0$,
\bigskip
\\
similar to our previous work in \cite{Lichen2018}. A lot of equations of state  
of our interests fulfill these assumptions. Particularly, we collect some 
equations of state in Appendix A which are used in our numerical tests as 
examples.

The deviatoric stress $\bf S$ has a piecewisely complex constitutive relations, 
which is governed by means of Hooke's law in the elastic region, the linearly 
hardened plastic flow rule during the plastic region, and constant states when the 
plastic limit is violated. The {\it von} Mises criterion is adopted to determine 
whether the material is under elastic region, plastic region or fluid region, 
which can be written in terms of the deviatoric stress
\[
{\mathscr H}(\mathbf S, Y)
=\mathbf S:\mathbf S -\dfrac{2}{3}Y^2, 
\]
where $Y$ is the yield stress limit of the solid material. $Y=Y^{_\mathscr E}$
corresponds to the elastic yield stress, and $Y=Y^{_\mathscr P}$ stands for the
plastic yield stress, respectively. 

The evolution of the deviatoric stress tensor can be written in the following
piecewise expressions 

\[
\pd{\mathbf S}{t}+\bm u\cdot\nabla\mathbf S=
\left\{
\begin{array}{ll}
2\mu^{_{\mathscr E}}\left(\mathbf D-\dfrac{1}{3}\tr(\mathbf D)\mathbf I\right),
& \left|S_{\rm eff}\right| \le Y^{_{\mathscr E}}, \\
2\mu^{_{\mathscr P}}\left(\mathbf D-\dfrac{1}{3}\tr(\mathbf D)\mathbf I\right),
& Y^{_{\mathscr E}} < \left|S_{\rm eff}\right| < Y^{_{\mathscr P}}, \\
\bm 0 , & \left|S_{\rm eff}\right| = Y^{_{\mathscr P}}, \\
\end{array}
\right.
\]
where 
\[
\mathbf D=\dfrac{1}{2}\left(\pd{\bm u}{\bm x}+
\left(\pd{\bm u}{\bm x}\right)^\top\right)
\]
is the rate of the deformation tensor, 
$S_{\rm eff}=\sqrt{\frac{3}{2}\mathbf S:\mathbf S}$ is the effective stress,
and $\mu^{_{\mathscr E}}$ and $\mu^{_{\mathscr P}}$ are the elastic and plastic
shear modulus, respectively. 

Utilizing the continuity equation, we can obtain the following balance law

\[
\pd{\rho S_{ij}}{t}+\pd{\rho u_k S_{ij}}{x_k}=
\left\{
\begin{array}{ll}
\beta^{_{\mathscr E}}\left(\pd{u_i}{x_j}+\pd{u_j}{x_i}\right)
-\dfrac{2}{3}\beta^{_{\mathscr E}}\delta_{ij}\pd{u_k}{x_k}, &
\left|S_{\rm eff}\right| \le Y^{_{\mathscr E}}, \\
\beta^{_{\mathscr P}}\left(\pd{u_i}{x_j}+\pd{u_j}{x_i}\right)
-\dfrac{2}{3}\beta^{_{\mathscr P}}\delta_{ij}\pd{u_k}{x_k}, &
 Y^{_{\mathscr E}}<\left|S_{\rm eff}\right| \le Y^{_{\mathscr P}}, \\
0, & \left|S_{\rm eff}\right| = Y^{_{\mathscr P}},
\end{array}
\right.
\]
where $\beta^{_{\mathscr E}}=\rho\mu^{_{\mathscr E}}$, 
$\beta^{_{\mathscr P}}=\rho\mu^{_{\mathscr P}}$, $\delta_{ij}$ is the Dirac 
function.

\begin{remark}
The hydro-elastoplastic model can degenerate to the elastic model, perfectly 
elastoplastic model, linearly hardened elastoplastic model and fluid model 
naturally. Fox example, it will degenerate to the elastic model when
$\mu^{_{\mathscr E}}=\mu^{_{\mathscr P}}$ and  
$Y^{_{\mathscr E}}=Y^{_{\mathscr P}}=\infty$, to the perfectly elastoplastic
model when $\mu^{_{\mathscr P}}=0$ and $Y^{_{\mathscr P}}=\infty$, to the 
linearly hardened elastoplasticity when 
$\mu^{_{\mathscr P}}<\mu^{_{\mathscr E}}$ and 
$Y^{_{\mathscr P}}=\infty$, and to the fluid model when 
$\mu^{_{\mathscr P}}=\mu^{_{\mathscr E}}=0$ and 
$Y^{_{\mathscr E}}=Y^{_{\mathscr P}}=0$, respectively.
\end{remark}

The model presented above is a conventional Eulerian non-conservative model for 
the elastoplastic behavior, which couples the nonlinear Euler equations of 
compressible fluids with the augmented elastoplastic deformation. In high-rate 
and large deformation region, the volumeric deformation is dominant and the 
deviatoric deformation can be neglected. When the load is removed, the 
elastoplastic effect should be taken into account again. For elastic deviatoric 
response the shear moduli may be taken to be functions of temperature and 
pressure. Plasticity is based on an additive decomposition of the rate of 
deformation tensor into elastic and plastic parts \cite{Trangenstein1991}.

Here we discuss the multi-medium Riemann problem between hydro-elastoplastic 
models, which can be treated in a similar way as the single-medium Riemann 
problem as long as the materials remain immiscible. The Riemann solution
consists of several constant regions separated by the phase interface and
genuinely nonlinear waves. The key of the Riemann problem is to compute the
states in the region adjacent to the phase interface (the so called star
region). To understand the influence of material deformation on the interface
states, the solution structure in each medium should be analyzed with
consideration of the elastoplastic deformation. Without loss of generality, we
take the medium at the right side of the interface as the example, and the left
side can be analyzed in a similar manner.

\subsection{Solution in the elastic phase}
The Riemann solution in the elastic phase consists of two constant states
separated by an elastic acoustic wave, whose speed is given by 
\[
\lambda = u  +\sqrt{c^2+\dfrac{4\mu^{_\mathscr E}}{3\rho}}.
\]
\begin{figure}[htb]
 \begin{minipage}[b]{.49\textwidth}
 \begin{tikzpicture}[scale=.65]
  \tikzset{font={\fontsize{9pt}{12}\selectfont}}
  \fill [lightgray,opacity=.5] (0,0)--(4.4,0) arc(0:110:4.4);
  \draw[very thick,-triangle 45](-5.5,0)--(5.5,0)
   node[right]{\normalsize $\xi$};
  \draw[very thick,-triangle 45] (0,0)--(0,5.5)
node[above] {\normalsize $\tau$};
  \draw (2.7,0.3) node{right initial state $\bm U_r$};
  \draw (0.4,1.8) node{$\bm U_r^*$};
  \draw (3.5,1.2) node{\bf elastic phase};
  \draw (0,0)--(45:5) node[above]{\it right-facing elastic wave};
  \draw[thick] (0,0)--(110:4.8) node[above]{\it interface};
  \end{tikzpicture}
    \caption{Wave structure of the elastic solid phase in the $\xi-\tau$
space.}
  \label{fig:Riemann-elastic}
 \end{minipage}
  \begin{minipage}[b]{.49\textwidth}
  \begin{tikzpicture}[scale=.65]
  \tikzset{font={\fontsize{9pt}{12}\selectfont}}
  \fill [lightgray,opacity=.5] (0,0)--(4.4,0) arc(0:110:4.4);
  \draw[very thick,-triangle 45](-5.5,0)--(5.5,0)node[right]
  {\normalsize $\xi$};
  \draw[very thick,-triangle 45] (0,0)--(0,5.5) node[above]
  {\normalsize $\tau$};
  \draw (2.7,0.3) node{right initial state $\bm U_r$};
  \draw (-.3,1.8) node{$\bm U_r^*$};
  \draw (1.5,1.5) node{$\bm U_r^{\Delta}$};  
  \draw (4.2,1.2) node{\bf elastoplastic phase};
  \draw (0,0)--(30:5) node[above]{\it right-facing elastic wave};
  \draw (0,0)--(60:5) node[above]{\it right-facing plastic wave};
  \draw[thick] (0,0)--(110:4.8) node[above]{\it interface};
 \end{tikzpicture}
   \caption{Wave structure of the elastoplastic solid phase in the $\xi-\tau$ 
space.}
  \label{fig:Riemann-elastoplastic}
 \end{minipage}
\end{figure}

A typical wave structure for the elastic Riemann problem is shown in Fig. 
\ref{fig:Riemann-elastic}. The acoustic wave is genuinely nonlinear, while
the contact wave is linearly degenerate \cite{Gavrilyuk2008}. The jump of the
normal deviatoric stress $S$ across the acoustic wave satisfies
\begin{equation}
S_k^* -S_k=\dfrac{4\beta^{_\mathscr E}_k}{3}
\left(\dfrac{1}{\rho_k^*}-\dfrac{1}{\rho_k}\right),
\label{eq:devjump}
\end{equation}
where the superscript ``*'' stands for the star region state.

\begin{itemize}
\item[-] {\bf Rarefaction wave}

Denote by $q=p-S$ the negative normal component of Cauchy stress tensor on
the interface. If $q_k^* \le q_k$, the acoustic wave is a rarefaction wave. It
can be found that 
\[
u-\int \dfrac{1}{\rho}\sqrt{c^2+\dfrac{4\beta^{_\mathscr E}}{3\rho^2}}
\dd\rho,\quad p-\int c^2\dd\rho,
\]
are Riemann invariants, which yield the relation
\begin{align*}
u^*_k-u_k&=\displaystyle\int_{q_k}^{q^*_k}
  \left(\rho^2c^2+\dfrac{4}{3}\beta^{_\mathscr E}_k\right)^{1/2}\dd q, \\
\rho_k^*-\rho_k&=\displaystyle\int_{p_k}^{p_k^*} \dfrac{\dd p}{c^2}.
\end{align*}

\item[-] {\bf Shock wave}

If $q_k^*>q_k$, then the acoustic wave is a shock wave. Applying the analysis of
non-conservative product \cite{Gavrilyuk2008} we have
\[
\begin{array}{c}
   u^*_k-u_k=\left(\dfrac1{\rho^*_k}-\dfrac1{\rho_k}\right)
   \left(-\dfrac{q^*_k-q_k}
   {1/\rho^*_k-1/\rho_k}\right)^{1/2},\\
   e_k(p^*_k,\rho^*_k)-e_k(p_k,\rho_k)
  +\dfrac{1}{2}(p^*_k+p_k)\left(\dfrac{1}{\rho^*_k}
  -\dfrac{1}{\rho_k}\right)=0.
\end{array}
\]

We define $\varphi^{_\mathscr E}_k(p,\rho)$ to relate $p$ and $\rho$ on the
elastic Hugoniot locus, 
\[
\varphi^{_\mathscr E}_k(p,\rho):
=\varGamma_k(\rho_k)\rho_k (p-h_k(\rho)) -
\varGamma_k(\rho) \rho (p_k-h_k(\rho_k)) 
-\dfrac{1}{2}\varGamma_k(\rho_k)(p+p_k)\varGamma_k(\rho)
(\rho-\rho_k),
\]
and $\varphi^{_\mathscr{SE}}_k(S,\rho)$ to relate $S$ and $\rho$ on
the elastic Hugoniot locus, according to \eqref{eq:devjump}, 
\[
\varphi^{_\mathscr{SE}}_k(S,\rho):=
  \left(\varGamma_k(\rho_k)\rho_k
 -\dfrac{1}{2}\varGamma_k(\rho_k)\varGamma_k(\rho)(\rho-\rho_k)\right)
  \left(S-S_k-\dfrac{4\beta^{_\mathscr{E}}_k}{3}
  \left(\dfrac{1}{\rho}-\dfrac{1}{\rho_k}\right)\right)=0.
\]
Define
\begin{equation}
\begin{aligned}
\varPhi^{_\mathscr{E}}_k(q,\rho):&=
\varphi^{_\mathscr{E}}_k(p,\rho)-\varphi^{_\mathscr{SE}}_k(S,\rho)\\
&=\varGamma_k(\rho_k)\rho_k\left(q+S_k^{_\mathscr E}-h_k(\rho)\right)
-\varGamma_k(\rho) \rho (p_k-h_k(\rho_k)) \\
&-\dfrac{1}{2}\varGamma_k(\rho_k)(q+S_k^{_\mathscr E}+p_k)
\varGamma_k(\rho)(\rho-\rho_k),
\label{eq:phi_se}
\end{aligned}
\end{equation}
where $S_k^{_\mathscr E}=
S_k+\dfrac{4}{3}\left(\dfrac{1}{\rho}-\dfrac{1}{\rho_k}\right)$.

We have the following results on the function
$\varPhi^{_\mathscr{E}}_k(q,\rho)$.
\begin{lemma}\label{thm:phie}
The Hugoniot function $\varPhi^{_\mathscr{E}}_k(q,\rho)$ defined in
\eqref{eq:phi_se} satisfies the following properties: 
1). $\varPhi^{_\mathscr{E}}_k(q,\rho_k)>0$;~
2). $\varPhi^{_\mathscr{E}}_k(q,\rho_{\max})<0$;~
3). ${\partial\varPhi^{_\mathscr{E}}_k}(q,\rho)/{\partial \rho}<0$;~
4). ${\partial^2\varPhi^{_\mathscr{E}}_k}(q,\rho)/{\partial \rho^2}<0$ if 
$h''_k(\rho)\ge (8+2\varGamma_k(\rho))\beta^{_\mathscr E}_k/3\rho^3$.
\end{lemma}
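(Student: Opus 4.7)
The plan is to verify the four properties of $\varPhi^{_\mathscr{E}}_k(q,\rho)$ by exploiting the structural assumptions (C1)--(C3), mirroring the fluid-case Hugoniot analysis of \cite{Lichen2018}, while carefully tracking the extra deviatoric contribution that enters through $S_k^{_\mathscr E}$. For Property~(1) I would simply substitute $\rho=\rho_k$: the third term in \eqref{eq:phi_se} vanishes by the explicit factor $\rho-\rho_k$ and $S_k^{_\mathscr E}|_{\rho=\rho_k}$ collapses to $S_k$, so everything telescopes to $\varGamma_k(\rho_k)\rho_k(q-q_k)$ with $q_k=p_k-S_k$; positivity then follows from the shock-branch condition $q>q_k$.

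For Property~(2) I would identify $\rho_{\max}$ as the upper admissible density at which the Hugoniot relation degenerates, namely the positive root of $2\rho_k-\varGamma_k(\rho)(\rho-\rho_k)=0$. At this $\rho$ the coefficient of $q$ in $\varPhi^{_\mathscr E}_k$ vanishes identically, and the remaining $\rho$-dependent terms are shown to sum to a negative quantity using the monotonicity of $h_k$ from (C3) and the bound on $\varGamma_k$ from (C2). Existence and uniqueness of such a $\rho_{\max}$ follow from (C1) applied to $\rho\varGamma_k(\rho)$.

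Properties~(3) and~(4) reduce to sign checks after differentiating in $\rho$. For~(3), the derivative of the first bracket contributes $-\varGamma_k(\rho_k)\rho_k\bigl(h'_k(\rho)+\partial_\rho S_k^{_\mathscr E}\bigr)$, handled by (C3); the middle term gives $-(\rho\varGamma_k(\rho))'(p_k-h_k(\rho_k))\le 0$ by (C1) together with the physical requirement $p_k\ge h_k(\rho_k)$; and the non-conservative third term expands into pieces involving $[\varGamma_k(\rho)(\rho-\rho_k)]'$, non-negative by (C1). For~(4), one more differentiation yields the sign-favorable contributions $-\varGamma_k(\rho_k)\rho_k h''_k(\rho)$ and $-(\rho\varGamma_k(\rho))''(p_k-h_k(\rho_k))$, non-positive by (C3) and (C1). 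The only potentially positive contribution comes from the elastic correction through $S_k^{_\mathscr E}$ and is of order $(8+2\varGamma_k(\rho))\beta^{_\mathscr E}_k/(3\rho^3)$; the hypothesis $h''_k(\rho)\ge (8+2\varGamma_k(\rho))\beta^{_\mathscr E}_k/(3\rho^3)$ is calibrated precisely so that $-\varGamma_k(\rho_k)\rho_k h''_k(\rho)$ absorbs it.

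The main obstacle I anticipate is Property~(2): the implicit definition of $\rho_{\max}$ through the Hugoniot degeneracy, and the combined invocation of (C1)--(C3) to certify the sign there, since none of the individual assumptions suffices alone. The secondary difficulty is the bookkeeping in Property~(4), where two differentiations of the $1/\rho$ dependence inside $S_k^{_\mathscr E}$ must be tracked carefully through the non-conservative product so that the matching bound stated in the hypothesis emerges cleanly.
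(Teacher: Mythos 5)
Your proposal is correct and follows essentially the same route as the paper: 1) and 2) by direct evaluation at $\rho_k$ and at the degeneracy density $\rho_{\max}$ where $2\rho_k-\varGamma_k(\rho)(\rho-\rho_k)$ vanishes, and 3), 4) by term-by-term differentiation under \textbf{(C1)}--\textbf{(C3)}, with the hypothesis on $h''_k$ calibrated to absorb the deviatoric correction of size $(8+2\varGamma_k(\rho))\varGamma_k(\rho_k)\rho_k\beta^{_\mathscr E}_k/(3\rho^3)$ (the paper merely shortcuts 1)--3) by citing the fluid-case function $\varphi^{_\mathscr E}_k$ from its earlier work and computing only the deviatoric increment). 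One bookkeeping caveat for 3): the contribution of the first bracket is $\varGamma_k(\rho_k)\rho_k\bigl(\partial_\rho S_k^{_\mathscr E}-h'_k(\rho)\bigr)$ with $\partial_\rho S_k^{_\mathscr E}=-4\beta^{_\mathscr E}_k/(3\rho^2)<0$, not $-\varGamma_k(\rho_k)\rho_k\bigl(h'_k(\rho)+\partial_\rho S_k^{_\mathscr E}\bigr)$, and the full deviatoric correction collects to $-\varGamma_k(\rho_k)\bigl(2\rho_k-\varGamma_k(\rho)(\rho-\rho_k)\bigr)\,2\beta^{_\mathscr E}_k/(3\rho^2)$, whose negativity rests on $\rho<\rho_{\max}$ rather than on \textbf{(C1)}/\textbf{(C3)} alone.
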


\begin{proof}
(1). 1), 2) are obvious results from our previous work in \cite{Lichen2018}.
	
(2). The first derivative of $\varPhi^{_\mathscr{E}}_k(q,\rho)$ in the elastic
region with respect to the density is
\begin{align*}
\pd{\varPhi_k^{_\mathscr E}}{\rho}(q,\rho) &= 
\pd{\varphi_k^{_\mathscr E}}{\rho}(p,\rho)-
\pd{\varphi^{_\mathcal S}_k}{\rho}(S,\rho) \\
&= \pd{\varphi_k^{_\mathscr E}}{\rho}(p,\rho)-
\varGamma_k(\rho_k)\left(2\rho_k-
\varGamma_k(\rho)(\rho-\rho_k)\right)
\dfrac{2\beta^{_\mathscr E}_k}{3\rho^2}.
\end{align*}
Since $\pd{\varphi_k^{_\mathscr E}}{\rho}(p,\rho)<0$, and
$2\rho_k-\varGamma_k(\rho)(\rho-\rho_k)>0$, we can conclude that
\[
\pd{\varPhi_k^{_\mathscr E}}{\rho}(q,\rho) < 0.
\]

(3). The second derivative of $\varPhi^{_\mathscr{E}}_k(q,\rho)$ with respect to
the density is
\begin{align*}
\pd{^2\varPhi_k^{_\mathscr E}}{\rho^2}(q,\rho) &= 
\pd{^2\varphi_k^{_\mathscr E}}{\rho^2}(p,\rho)-
\pd{^2\varphi^{_\mathcal S}_k}{\rho^2}(S,\rho) \\
&= \pd{^2\varphi_k^{_\mathscr E}}{\rho^2}(p,\rho)
+\dfrac{2\beta^{_\mathscr E}_k}{3\rho^3}\varGamma_k(\rho_k)
\left(4\rho_k-\rho\varGamma_k(\rho)+2\rho_k\varGamma_k(\rho)\right)\\
&< \pd{^2\varphi_k^{_\mathscr E}}{\rho^2}(p,\rho)
+\dfrac{(8+2\varGamma_k(\rho))\varGamma_k(\rho_k)
\beta^{_\mathscr E}_k\rho_k}{3\rho^3}.
\end{align*}
It is an obvious result that 
$\pd{^2\varPhi_k^{_\mathscr E}}{\rho^2}(q,\rho)<0$ when
$\varGamma_k''(\rho)=0$ and 
$h''_k(\rho)\ge (8+2\varGamma_k(\rho))\beta^{_\mathscr E}_k/3\rho^3$.

This completes the whole proof. 
\end{proof}

The slope of the Hugoniot locus in the elastic solid phase can be found by the
method of implicit differentiation, namely,
\[
\chi_k^{_\mathscr E}(q,\rho) := \left.\pd{q}{\rho}
\right|_{\varPhi_k^{_\mathscr E}}=
-\dfrac{2\partial\varPhi^{_\mathscr E}_k(q,\rho)/\partial\rho}{
  \varGamma_k(\rho_k)(2\rho_k-{\varGamma_k(\rho)}
  (\rho-\rho_k))}>0.
\]
\end{itemize}

\begin{remark}
The wave structure in the linearly hardened region can be treated as a similar
case as the elastic region. And the wave structure in the fluid region will
can be viewed as $\beta_k^{_\mathscr P}=\mu_k^{_\mathscr P}=0$.
\end{remark}

\subsection{Solution in the elastic-plastic phase}
When the solid undergoes an elastoplastic phase transition, the constitutive
model is distinguished by the elastic limit. Due to the discrepancy of the
elastic and plastic wave, there exists a jump in the slope of the rarefaction
curve or Hugoniot locus, which leads to the occurrence of \emph{split wave}.
Since the elastic wave propagates faster than the plastic wave, the acoustic
wave structure, shown in Fig. \ref{fig:Riemann-elastoplastic}, will include a 
\emph{leading elastic wave} which connects the initial state $\bm U_k$ to the 
elastic limit state $\bm U_k^\Delta$, and a \emph{trailing plastic wave} which 
connects the elastic limit state to the star region state $\bm U^*_k$, where the 
superscript ``$\Delta$'' denotes the state at the elastic limit.

\begin{itemize}
\item[-] {\bf Elastic limit state}

Before we discuss the elastoplastic flow, let us introduce the solid
densities at the elastic limit of compression $\rho_{_\mathscr C}$ and tension
$\rho_{_\mathscr T}$ respectively, such that the effective stress 
$\sqrt{\mathbf S:\mathbf S}$ reaches the elastic yield stress limit
$\sqrt{2/3}Y^{_\mathscr E}$.

According to the jump conditions of the deviatoric stress across the acoustic
wave \eqref{eq:devjump}, we can get the corresponding effective stress after the
elastic acoustic wave,
\[
\mathbf S_k^{\Delta}:\mathbf S_k^{\Delta}
=\dfrac{8}{3}\left(\beta^{_\mathscr E}_k\right)^2
\left(\dfrac{1}{\rho_k^\Delta}-\dfrac{1}{\rho_k}\right)^2
+ 4\beta^{_\mathscr E}_k\left(\dfrac{1}{\rho_k^\Delta}
-\dfrac{1}{\rho_k}\right)S_k
+ \mathbf S_k:\mathbf S_k.
\]
where $\mathbf S_k$ is the deviatoric stress tensor in the normal direction of 
the phase interface. Setting $\mathbf S_k^{\Delta}:\mathbf 
S_k^{\Delta}=2\left(Y^{_\mathscr
E}\right)^2/3$ yields the definition of $\rho_{_\mathscr C}$ and
$\rho_{_\mathscr T}$

\begin{equation}
\begin{aligned}
\rho_{_\mathscr C}
&=\left(\dfrac{1}{\rho_k}-\dfrac{3}{4\beta^{_\mathscr E}_k}S_k
 - \dfrac{3}{4\beta^{_\mathscr E}_k}\sqrt{S_k^2+
 \dfrac 49\left(Y^{_\mathscr E}\right)^2
-\dfrac 23\mathbf S_k:\mathbf S_k}\right)^{-1}, \\
\rho_{_\mathscr T}
&=\left(\dfrac{1}{\rho_k}-\dfrac{3}{4\beta^{_\mathscr E}_k}S_k
 + \dfrac{3}{4\beta^{_\mathscr E}_k}
 \sqrt{S_k^2+\dfrac 49\left(Y^{_\mathscr E}\right)^2
 -\dfrac 23\mathbf S_k:\mathbf S_k}\right)^{-1}.
\end{aligned}
\label{eq:elastic_rho}
\end{equation}

Note that for most applications the elastic yield stress limit $Y^{_\mathscr 
E}$ is much smaller than the elastic shear modulus $\mu^{_\mathscr E}$ (about 
$2\sim3$ orders of magnitude smaller). Therefore, both $\rho_{_\mathscr C}$ and
$\rho_{_\mathscr T}$ must be positive. The other relevant quantities can also 
be calculated
\[
p_{_\mathscr C}=\dfrac{2\varGamma_k(\rho_k)\rho_kh_k(\rho_{_\mathscr C})+
2\varGamma_k(\rho_{_\mathscr C})\rho_{_\mathscr C}(p_k-h_k(\rho_k))
+\varGamma_k(\rho_k)\varGamma_k(\rho_{_\mathscr C})
p_k(\rho_{_\mathscr C}-\rho_k)}	
{\varGamma_k(\rho_k)\left(
(2+\varGamma_k(\rho_{_\mathscr C}))\rho_k
-\varGamma_k(\rho_{_\mathscr C})\rho_{_\mathscr C}\right)},
\]
\[
p_{_\mathscr T}=\int_{\rho_k}^{\rho_{_\mathscr T}} c^2 \dd\rho,
\]
\[
S_{_\mathscr C}=S_k+\dfrac{4\beta_k^{_\mathscr E}}{3}
\left(\dfrac{1}{\rho_{_\mathscr C}}-\dfrac{1}{\rho_k}\right),
\]
\[
S_{_\mathscr T}=S_k+\dfrac{4\beta_k^{_\mathscr E}}{3}
\left(\dfrac{1}{\rho_{_\mathscr T}}-\dfrac{1}{\rho_k}\right), 
\]
\[
q_{_\mathscr T}=p_{_\mathscr T}-S_{_\mathscr T}, \quad
q_{_\mathscr C}=p_{_\mathscr C}-S_{_\mathscr C},
\]
where $p_{_\mathscr C}, S_{_\mathscr C}, q_{_\mathscr C}$ and $p_{_\mathscr T},
S_{_\mathscr T}, q_{_\mathscr T}$ are the hydrostatic pressure, the normal
component of deviatoric stress tensor and negative Cauchy stress tensor at the
elastic limit of compression and tension, respectively.

\item[-] {\bf Elastoplastic rarefaction wave}

If $q_k^* \le q_{_\mathscr T} \le q_k$, the acoustic elastic wave and plastic
wave are both rarefaction waves,
\[
\begin{cases}
u^*_k-u_k=\displaystyle\int_{q_k}^{q_{_\mathscr T}}
\left(\rho^2c^2+\dfrac{4\beta^{_\mathscr E}_k}{3}\right)^{-1/2}\dd q +
\displaystyle\int_{q_{_\mathscr T}}^{q_k^*}
\left(\rho^2c^2+\dfrac{4\beta^{_\mathscr P}_k}{3}\right)^{-1/2}\dd q, \\
\rho_k^*-\rho_k=\displaystyle\int_{q_k}^{q_{_\mathscr T}}
\left(c^2+\dfrac{4\beta^{_\mathscr E}_k}{3\rho^2}\right)^{-1}\dd q
+\displaystyle\int_{q_{_\mathscr T}}^{q_k^*}
\left(c^2+\dfrac{4\beta^{_\mathscr P}_k}{3\rho^2}\right)^{-1}\dd q.
\end{cases}
\]

\item[-] {\bf Elastoplastic shock wave}

If $q_k^*>q_{_\mathscr C}>q_k$, then the acoustic elastic wave and plastic wave
are both shock waves,
\[
\begin{cases}
  u^*_k-u_k=\left(-(q_{_\mathscr C}-q_k)\left(\dfrac{1}{\rho_{_\mathscr C}}
          -\dfrac{1}{\rho_k}\right)\right)^{1/2} +
   \left(-(q_k^*-q_{_\mathscr C})\left(\dfrac{1}{\rho}
          -\dfrac{1}{\rho_{_\mathscr C}}\right)\right)^{1/2},\\
   e_k(p^*_k,\rho^*_k)-e_k(p_k,\rho_k)
  +\dfrac{1}{2}(p_k^*+p_{_\mathscr C})\left(\dfrac{1}{\rho^*_k}
  -\dfrac{1}{\rho_{_\mathscr C}}\right)
   +\dfrac{1}{2}(p_{_\mathscr C}+p_k)\left(\dfrac{1}{\rho_{_\mathscr C}}
  -\dfrac{1}{\rho_k}\right)=0.
\end{cases}
\]

Similar to the elastic solid phase, we define $\varPhi^{_\mathscr 
{EP}}_k(q,\rho)$ to relate $q$ and $\rho$ on the elastoplastic Hugoniot locus, 
\begin{equation}
\varPhi^{_\mathscr {EP}}_k(q,\rho):=
\varPhi^{_\mathscr {E}}_k(q_{_\mathscr C},\rho_{_\mathscr C})+
\varPhi^{_\mathscr {P}}_k(q,\rho), \quad q>q_{_\mathscr C}>q_k,
\label{eq:phi_sp}
\end{equation}
where 
\begin{align*}
\varPhi^{_\mathscr E}_k(q_{_\mathscr C},\rho_{_\mathscr C}):&=
\varGamma_k(\rho_k)\rho_k\left(q_{_\mathscr C}+S_{_\mathscr{C}}-
h_k(\rho_{_\mathscr C})\right)
-\varGamma_k(\rho_{_\mathscr C}) \rho_{_\mathscr C}(p_k-h_k(\rho_k)) \\
&-\dfrac{1}{2}\varGamma_k(\rho_k)
(q_{_\mathscr C}+S_{_\mathscr{C}}+p_k)
\varGamma_k(\rho_{_\mathscr C})(\rho_{_\mathscr C}-\rho_k),  \\
\varPhi^{_{\mathscr {P}}}_k(q,\rho) :&=
\varGamma_k(\rho_{_\mathscr C})\rho_{_\mathscr C}
\left(q+S_k^{_\mathscr P}-h_k(\rho)\right)
-\varGamma_k(\rho) \rho (p_{_\mathscr C}-h_k(\rho_{_\mathscr C})) \\
&-\dfrac{1}{2}\varGamma_k(\rho_{_\mathscr C})
(q+S_k^{_\mathscr P}+p_{_\mathscr C})
\varGamma_k(\rho)(\rho-\rho_{_\mathscr C}), \\
S_k^{_\mathscr {P}} &= S_{_\mathscr C} +\dfrac{4}{3}\beta_k^{_\mathscr P}
 \left(\dfrac{1}{\rho}-\dfrac{1}{\rho_{_\mathscr C}}\right).
\end{align*}

We have the following results on the function 
$\varPhi^{_\mathscr {EP}}_k(q,\rho)$ by a similar calculus to the elastic  
case in Lemma \ref{thm:phie}.
\begin{lemma}\label{thm:phiep}
The Hugoniot function $\varPhi^{_\mathscr {EP}}_k(q,\rho)$ defined in
\eqref{eq:phi_sp} satisfies the following properties: 
1). $\varPhi^{_\mathscr {EP}}_k(q,\rho_k)>0$;~
2). $\varPhi^{_\mathscr {EP}}_k(q,\rho_{\max})<0$;~
3). ${\partial\varPhi^{_\mathscr {EP}}_k}(q,\rho)/{\partial \rho}<0$;~
4). ${\partial^2\varPhi^{_\mathscr {EP}}_k}(q,\rho)/{\partial \rho^2}<0$ if 
$h''_k(\rho)\ge (8+2\varGamma_k(\rho))\beta^{_\mathscr P}_k/3\rho^3$.
\end{lemma}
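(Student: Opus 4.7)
The plan is to exploit the additive structure in definition \eqref{eq:phi_sp} and to reduce the four claims to translations of the arguments already carried out for $\varPhi^{_\mathscr E}_k$ in Lemma \ref{thm:phie}. The key observation is that the first summand $\varPhi^{_\mathscr E}_k(q_{_\mathscr C},\rho_{_\mathscr C})$ is constant in $(q,\rho)$; moreover, since $(q_{_\mathscr C},\rho_{_\mathscr C})$ was constructed, via the Rankine--Hugoniot jump across the leading elastic shock, as the state reached from $(\rho_k,p_k,S_k)$, this constant actually vanishes. Hence $\varPhi^{_\mathscr{EP}}_k(q,\rho)=\varPhi^{_\mathscr P}_k(q,\rho)$, and every $\rho$-derivative acts only on the plastic piece.

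The second observation is structural: $\varPhi^{_\mathscr P}_k(q,\rho)$ has exactly the same functional form as $\varPhi^{_\mathscr E}_k(q,\rho)$ with $(\rho_k,p_k,S_k)$ replaced by $(\rho_{_\mathscr C},p_{_\mathscr C},S_{_\mathscr C})$ and $\beta^{_\mathscr E}_k$ replaced by $\beta^{_\mathscr P}_k$. Consequently claims (3) and (4) are verbatim translations of the corresponding arguments in Lemma \ref{thm:phie}. For (3), splitting the plastic Hugoniot function into a hydrostatic and a deviatoric contribution as in the elastic case yields
\[
\pd{\varPhi^{_\mathscr P}_k}{\rho}(q,\rho)
= \left(\text{hydrostatic part}\right)
- \varGamma_k(\rho_{_\mathscr C})\left(2\rho_{_\mathscr C}-\varGamma_k(\rho)(\rho-\rho_{_\mathscr C})\right)\dfrac{2\beta^{_\mathscr P}_k}{3\rho^2},
\]
whose hydrostatic part is negative by (C1)--(C3) and whose second term is negative because (C2) ensures $2\rho_{_\mathscr C}-\varGamma_k(\rho)(\rho-\rho_{_\mathscr C})>0$. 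The same second-derivative computation gives (4), producing the stated sufficient condition on $h''_k(\rho)$ with $\beta^{_\mathscr P}_k$ in place of $\beta^{_\mathscr E}_k$.

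For claims (1) and (2), one re-anchors the elastic argument from \cite{Lichen2018} at the plastic reference $(\rho_{_\mathscr C},p_{_\mathscr C},S_{_\mathscr C})$: positivity at the lower endpoint follows from the vanishing of the "initial" terms in the Hugoniot function, while negativity at $\rho_{\max}$ follows from (C3), because $h_k(\rho)$ grows rapidly enough that the term $-\varGamma_k(\rho_k)\rho_k h_k(\rho)$ dominates as $\rho\to\rho_{\max}$. Combined with the monotonicity already established in (3), the sign pattern reproduces that of the elastic case.

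The main obstacle I anticipate is the bookkeeping in claim (1), where the stated endpoint is $\rho_k$ rather than the plastic anchor $\rho_{_\mathscr C}$. For a compressive elastoplastic shock $\rho_k<\rho_{_\mathscr C}<\rho$, evaluating $\varPhi^{_\mathscr P}_k$ at $\rho=\rho_k$ requires extrapolating $S_k^{_\mathscr P}(\rho)=S_{_\mathscr C}+\tfrac{4}{3}\beta^{_\mathscr P}_k(1/\rho-1/\rho_{_\mathscr C})$ outside its physical range and then verifying that the resulting deviatoric cross-term does not overturn the positivity coming from the hydrostatic piece. This reduces to bounding $\beta^{_\mathscr P}_k\left|1/\rho_k-1/\rho_{_\mathscr C}\right|$ against $\varGamma_k(\rho_k)\rho_k\left(p_{_\mathscr C}-h_k(\rho_{_\mathscr C})\right)$, a control that follows, as in the elastic case, from the convexity hypotheses (C1)--(C3). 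Beyond this sign check, the proof is an immediate parallel of Lemma \ref{thm:phie}.
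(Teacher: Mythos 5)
Your proposal is correct and follows essentially the same route as the paper: the constant elastic summand drops out under differentiation, and claims (3) and (4) are obtained by repeating the computation of Lemma \ref{thm:phie} with the anchor state $(\rho_k,p_k,S_k,\beta^{_\mathscr E}_k)$ replaced by $(\rho_{_\mathscr C},p_{_\mathscr C},S_{_\mathscr C},\beta^{_\mathscr P}_k)$, while (1) and (2) follow from the sign at the endpoints together with monotonicity. Your additional observation that $\varPhi^{_\mathscr E}_k(q_{_\mathscr C},\rho_{_\mathscr C})=0$ by the very definition of $p_{_\mathscr C}$ is a correct (and slightly cleaner) refinement of the paper's remark that (1) and (2) are immediate from the expressions.
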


Similar to the elastic solid phase, the slope of the Hugoniot locus in the
elastoplastic solid phase can be found by the method of implicit
differentiation, namely,
\[
\chi_k^{_\mathscr {EP}}(q,\rho) := \left.\pd{q}{\rho}
\right|_{\varPhi_k^{_\mathscr {EP}}}=
-\dfrac{2\partial\varPhi^{_\mathscr {EP}}_k(q,\rho)/\partial\rho}{
  \varGamma_k(\rho_k)(2\rho_k-{\varGamma_k(\rho)}
  (\rho-\rho_k))}>0.
\]
\end{itemize}

\subsection{Solution in the plastic-fluid phase}
Similar to the elastoplastic phase, the constitutive model is distinguished 
by the plastic limit when the solid undergoes a plastic-fluid phase transition. 
The discrepancy of the plastic and fluid wave leads to the split of plastic and 
fluid wave, shown in Fig. \ref{fig:Riemann-plastofluid}, which includes a 
\emph{leading plastic wave} which connects the initial state $\bm U_k$ to the 
plastic limit state $\bm U_k^\nabla$, and a \emph{trailing fluid wave} which 
connects the plastic limit state to the star region state $\bm U^*_k$, where the 
superscript ``$\nabla$'' denotes the state at the plastic limit.

\begin{itemize}
\item[-] {\bf Plastic limit state}

The solid densities at the plastic limit of compression $\rho_{_\mathscr {PC}}$ 
and tension $\rho_{_\mathscr {PT}}$ can be calculated when the effective stress 
$\sqrt{\mathbf S:\mathbf S}$ reaches the plastic  stress limit 
$\sqrt{2/3}Y^{_\mathscr P}$, similar to Eq. \eqref{eq:elastic_rho}
\[
\begin{array}{c}
\rho_{_\mathscr{PC}}
 =\left(\dfrac{1}{\rho_k}-\dfrac{3}
 {4\beta_k^{_\mathscr P}}S_k
 - \dfrac{3}{4\beta_k^{_\mathscr P}}\sqrt{(S_k)^2+
 \dfrac 49 (Y^{^\mathscr P})^2
 -\dfrac 23\mathbf S_k:\mathbf S_k}\right)^{-1}, \\
\rho_{_\mathscr{PT}}
=\left(\dfrac{1}{\rho_k}-\dfrac{3}
 {4\beta_k^{_\mathscr P}}S_k 
 + \dfrac{3}{4\beta_k^{_\mathscr P}}\sqrt{(S_k)^2+
   \dfrac 49(Y^{^\mathscr P})^2
   -\dfrac 23\mathbf S_k:\mathbf S_k}\right)^{-1}. 
\end{array}
\]

Then the other relevant quantities can also be calculated
\[
\begin{array}{c}
p_{_\mathscr{PC}}=\dfrac{2\varGamma_k(\rho_k)
\rho_k h_k(\rho_{_\mathscr {PC}})+
2\varGamma_k(\rho_{_\mathscr {PC}})\rho_{_\mathscr {PC}}
(p_k-h_k(\rho_k))
+\varGamma_k(\rho_k)\varGamma_k(\rho_{_\mathscr{PC}})
  p_k(\rho_{_\mathscr{PC}}-\rho_k)}	
{\varGamma_k(\rho_{_\mathscr {C}})\left(
(2+\varGamma_k(\rho_{_\mathscr {PC}}))\rho_k
-\varGamma_k(\rho_{_\mathscr {PC}})\rho_{_\mathscr {PC}}\right)},  
\end{array}
\]

\[
\begin{array}{c}
p_{_\mathscr {PT}}=\int_{\rho_k}^{\rho_{_\mathscr {PT}}} 
c^2 \dd\rho, 
\end{array}
\]

\[
\begin{array}{c}
S_{_\mathscr {PC}}=S_k+\dfrac{4\beta_k^{_\mathscr{P}}}{3}
\left(\dfrac{1}{\rho_{_\mathscr {PC}}}-\dfrac{1}{\rho_k}\right),\\
\vspace{3mm}
S_{_\mathscr {PT}}=S_k+\dfrac{4\beta_k^{_\mathscr {P}}}{3}
\left(\dfrac{1}{\rho_{_\mathscr {PT}}}-\dfrac{1}{\rho_k}\right),\\
\vspace{2mm}
q_{_\mathscr {PC}}=p_{_\mathscr {PC}}-S_{_\mathscr {PC}}, \quad 
q_{_\mathscr {PT}}=p_{_\mathscr {PT}}-S_{_\mathscr {PT}}.
\end{array}
\]

\item[-] {\bf Plastic-fluid rarefaction wave}

If $q_k^* \le q_{_\mathscr {PT}} \le q_k \le q_{_\mathscr {T}} $, the acoustic 
plastic wave and fluid wave are both rarefaction waves,
\[
\begin{cases}
u^*_k-u_k=\displaystyle\int_{q_k}^{q_{_\mathscr {PT}}}
\left(\rho^2c^2+\dfrac{4\beta^{_\mathscr P}_k}{3}\right)^{-1/2}\dd q +
\displaystyle\int_{q_{_\mathscr {PT}}}^{q_k^*}\dfrac{1}{\rho c}\dd q, \\
\rho_k^*-\rho_k=\displaystyle\int_{q_k}^{q_{_\mathscr {PT}}}
\left(c^2+\dfrac{4\beta^{_\mathscr P}_k}{3\rho^2}\right)^{-1}\dd q
+\displaystyle\int_{q_{_\mathscr {PT}}}^{q_k^*} \dfrac{1}{c^2}\dd q.
\end{cases}
\]

\item[-] {\bf Plastic-fluid shock wave}

If $q_k^*>q_{_\mathscr {PC}}>q_k>q_{_\mathscr {C}} $, then the acoustic plastic 
wave and fluid wave are both shock waves,
\[
\begin{cases}
  u^*_k-u_k=\left(-(q_{_\mathscr {PC}}-q_k)
  \left(\dfrac{1}{\rho_{_\mathscr {PC}}}-\dfrac{1}{\rho_k}\right)\right)^{1/2}+
  \left(-(q_k^*-q_{_\mathscr {PC}})\left(\dfrac{1}{\rho}
  -\dfrac{1}{\rho_{_\mathscr {PC}}}\right)\right)^{1/2},\\
   e_k(p^*_k,\rho^*_k)-e_k(p_k,\rho_k)
  +\dfrac{1}{2}(p_k^*+p_{_\mathscr {PC}})\left(\dfrac{1}{\rho^*_k}
  -\dfrac{1}{\rho_{_\mathscr {PC}}}\right)
  +\dfrac{1}{2}(p_{_\mathscr {PC}}+p_k)
  \left(\dfrac{1}{\rho_{_\mathscr {PC}}} 
  -\dfrac{1}{\rho_k}\right)=0.
\end{cases}
\]

Similar to the elastoplastic solid phase, we define 
$\varPhi^{_\mathscr {PF}}_k(q,\rho)$ to relate $q$ and $\rho$ on the 
plastic-fluid Hugoniot locus, 
\begin{equation}
\varPhi^{_\mathscr {PF}}_k(q,\rho):=
\varPhi^{_\mathscr {P}}_k(q_{_\mathscr {PC}},\rho_{_\mathscr {PC}})+
\varPhi^{_\mathscr {F}}_k(q,\rho), \quad 
q>q_{_\mathscr {PC}}>q_k>q_{_\mathscr {C}},
\label{eq:phi_pf}
\end{equation}
where
\begin{align*}
\varPhi^{_\mathscr{P}}_k(q_{_\mathscr{PC}},\rho_{_\mathscr{PC}}):&=
\varGamma_k(\rho_k)\rho_k \left(q_{_\mathscr{PC}}
+S_{_\mathscr{PC}}-h_k(\rho_{_\mathscr{PC}})\right)
-\varGamma_k(\rho_{_\mathscr{PC}}) \rho_{_\mathscr{PC}}(p_k-h_k(\rho_k)) \\
&-\dfrac{1}{2}\varGamma_k(\rho_k)(q_{_\mathscr{PC}}+S_{_\mathscr{PC}}+p_k)
\varGamma_k(\rho_{_\mathscr{PC}})(\rho_{_\mathscr{PC}}-\rho_k),  \\ 
\varPhi^{_\mathscr{F}}_k(q,\rho) &=
\varGamma_k(\rho_{_\mathscr{PC}})\rho_{_\mathscr{PC}}
\left(q+S_{_\mathscr{F}}-h_k(\rho)\right)
-\varGamma_k(\rho) \rho 
(p_{_\mathscr{PC}}-h_k(\rho_{_\mathscr{PC}}))  \\
&-\dfrac{1}{2}\varGamma_k(\rho_{_\mathscr{PC}})(q
+S_{_\mathscr{F}}+p_{_\mathscr{PC}})
\varGamma_k(\rho)(\rho-\rho_{_\mathscr{PC}}), \\
S_{_\mathscr{F}} &= S_{_\mathscr{PC}}.
\end{align*}

Similarly, we have the following results on the function 
$\varPhi^{_\mathscr {PF}}_k(q,\rho)$ by a simple calculus
\begin{lemma}\label{thm:phipf}
The Hugoniot function $\varPhi^{_\mathscr {PF}}_k(q,\rho)$ defined in
\eqref{eq:phi_pf} satisfies the following properties: 
1). $\varPhi^{_\mathscr {PF}}_k(q,\rho_k)>0$;~
2). $\varPhi^{_\mathscr {PF}}_k(q,\rho_{\max})<0$;~
3). ${\partial\varPhi^{_\mathscr {PF}}_k}(q,\rho)/{\partial \rho}<0$;~
4). ${\partial^2\varPhi^{_\mathscr {PF}}_k}(q,\rho)/{\partial \rho^2}<0$ if 
$\varGamma''_k(\rho)=0$.
\end{lemma}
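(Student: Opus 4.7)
The plan is to follow closely the template of Lemma \ref{thm:phie} (and the elastoplastic analogue Lemma \ref{thm:phiep}) while exploiting two structural simplifications. First, the summand $\varPhi^{_\mathscr P}_k(q_{_\mathscr{PC}},\rho_{_\mathscr{PC}})$ in the definition \eqref{eq:phi_pf} depends only on the initial state $\bm U_k$ and on the plastic-limit state $\bm U_k^\nabla$, so it is a constant with respect to the unknown pair $(q,\rho)$; consequently, every derivative of $\varPhi^{_\mathscr{PF}}_k$ reduces to the corresponding derivative of $\varPhi^{_\mathscr F}_k$. Second, in the fluid leg the deviatoric stress is frozen at $S_{_\mathscr F}=S_{_\mathscr{PC}}$, so $\varPhi^{_\mathscr F}_k$ has the structure of a pure scalar Mie-Gr\"uneisen Hugoniot function with reference state shifted from $(p_k,\rho_k)$ to $(p_{_\mathscr{PC}},\rho_{_\mathscr{PC}})$ and with $q$ shifted by a constant, matching the fluid case treated in our earlier work~\cite{Lichen2018}. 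No term proportional to $\beta^{_\mathscr E}_k$ or $\beta^{_\mathscr P}_k$ appears anywhere in the analysis, which is what allows the hypothesis in part 4) to weaken from an $h_k''$-threshold to merely $\varGamma_k''(\rho)=0$.

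For the endpoint parts 1) and 2), I would evaluate $\varPhi^{_\mathscr{PF}}_k$ at $\rho=\rho_k$ and at $\rho=\rho_{\max}$. At $\rho=\rho_k$, the expression $\varPhi^{_\mathscr F}_k(q,\rho_k)$ can be written explicitly as a linear expression in $q+S_{_\mathscr F}-p_{_\mathscr{PC}}$ with coefficients fixed by $(\rho_k,\rho_{_\mathscr{PC}})$, and adding the strictly positive plastic-leg constant $\varPhi^{_\mathscr P}_k(q_{_\mathscr{PC}},\rho_{_\mathscr{PC}})>0$ (which is part 1) applied to the plastic-leg Hugoniot) yields a strictly positive sum. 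At $\rho=\rho_{\max}$, assumption (C2) gives $\varGamma_k\ge\varGamma_\infty>0$ and (C3) forces $h_k(\rho)\to+\infty$, so the leading term $-\varGamma_k(\rho_{_\mathscr{PC}})\rho_{_\mathscr{PC}}\, h_k(\rho)$ in $\varPhi^{_\mathscr F}_k$ drives the whole expression below any prescribed bound, dominating the finite plastic-leg constant.

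For parts 3) and 4), the identity $\partial^j_\rho\varPhi^{_\mathscr{PF}}_k=\partial^j_\rho\varPhi^{_\mathscr F}_k$ ($j=1,2$) reduces the problem to checking the sign of the two derivatives of a scalar Mie-Gr\"uneisen Hugoniot. For the first derivative, because $S_{_\mathscr F}$ does not depend on $\rho$, the awkward cross term of the form $\beta^{_\mathscr E}_k/\rho^2$ that appeared in the elastic proof is simply absent, and $\partial_\rho\varPhi^{_\mathscr F}_k<0$ follows from (C1) together with the monotonicity of $h_k$. For the second derivative, the surviving terms are $-\varGamma_k(\rho_{_\mathscr{PC}})\rho_{_\mathscr{PC}}\,h_k''(\rho)\le 0$ by (C3), $-(\varGamma_k\rho)''(p_{_\mathscr{PC}}-h_k(\rho_{_\mathscr{PC}}))\le 0$ by (C1) combined with the physical positivity of the thermal pressure, and a term proportional to $\varGamma_k'(\rho)\le 0$; all contributions proportional to $\varGamma_k''(\rho)$ drop out under the stated hypothesis, giving strict negativity.

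The main obstacle is the bookkeeping associated with correctly separating the constant plastic-limit contribution from the $(q,\rho)$-dependent fluid contribution and verifying the sign conditions that were hidden under "by a simple calculus'': one must confirm that the plastic-limit quantities $\rho_{_\mathscr{PC}},p_{_\mathscr{PC}},S_{_\mathscr{PC}},q_{_\mathscr{PC}}$ are admissible and fall in the ordering $q_{_\mathscr{PC}}>q_k>q_{_\mathscr C}$ required by \eqref{eq:phi_pf}, and must track the sign of the cross factor $(q+S_{_\mathscr F}+p_{_\mathscr{PC}})$ across both the rarefaction and shock branches to ensure the $\rho=\rho_k$ endpoint inequality is strict. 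Once these are dispatched, the differentiation reduces to a specialization of the proof of Lemma \ref{thm:phie} with the shear coefficient set to zero.
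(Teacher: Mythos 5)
Your overall route is the one the paper itself intends (it offers no written proof for this lemma, only the remark that it follows ``by a simple calculus'' as in Lemma~\ref{thm:phie}), and your two structural observations are exactly the right ones: the plastic-leg summand is independent of $(q,\rho)$, so $\partial^j_\rho\varPhi^{_\mathscr{PF}}_k=\partial^j_\rho\varPhi^{_\mathscr F}_k$, and since $S_{_\mathscr F}=S_{_\mathscr{PC}}$ is frozen, $\varPhi^{_\mathscr F}_k$ is a pure Mie-Gr\"uneisen Hugoniot centered at $(p_{_\mathscr{PC}},\rho_{_\mathscr{PC}})$ with no $\beta$-terms, which is precisely why the hypothesis in part~4) relaxes to $\varGamma_k''=0$. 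One step, however, is wrong as stated: you claim $\varPhi^{_\mathscr P}_k(q_{_\mathscr{PC}},\rho_{_\mathscr{PC}})>0$ ``by part~1) applied to the plastic-leg Hugoniot.'' Part~1) gives positivity at the \emph{base} density $\rho_k$, not at $\rho_{_\mathscr{PC}}$; the pair $(q_{_\mathscr{PC}},\rho_{_\mathscr{PC}})$ is by construction a point \emph{on} the plastic Hugoniot locus through $(q_k,\rho_k)$ --- the displayed formula for $p_{_\mathscr{PC}}$ is exactly the solution of the corresponding Hugoniot equation at $\rho=\rho_{_\mathscr{PC}}$ --- so this constant is zero, not strictly positive. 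Since your argument for part~1) leans on adding this ``strictly positive'' constant to an unexamined linear expression, it is incomplete as written; but the gap is easily closed: $\varPhi^{_\mathscr F}_k(q,\rho_{_\mathscr{PC}})=\varGamma_k(\rho_{_\mathscr{PC}})\rho_{_\mathscr{PC}}\,(q-q_{_\mathscr{PC}})>0$ for $q>q_{_\mathscr{PC}}$, and combined with part~3) and $\rho_k<\rho_{_\mathscr{PC}}$ this gives $\varPhi^{_\mathscr F}_k(q,\rho_k)>0$ outright. With that repair (and accepting, as the paper and Lemma~\ref{thm:phie} do, the sign facts for the scalar Hugoniot imported from the earlier fluid work), the remaining parts go through as you describe.
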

\end{itemize}

\begin{figure}[htb]
 \begin{minipage}[b]{.49\textwidth}
 \begin{tikzpicture}[scale=.65]
  \tikzset{font={\fontsize{9pt}{12}\selectfont}}
  \fill [lightgray,opacity=.5] (0,0)--(4.4,0) arc(0:110:4.4);
  \draw[very thick,-triangle 45](-5.5,0)--(5.5,0)node[right]
  {\normalsize $\xi$};
  \draw[very thick,-triangle 45] (0,0)--(0,5.5) node[above]
  {\normalsize $\tau$};
  \draw (2.7,0.3) node{right initial state $\bm U_r$};
  \draw (-.3,1.8) node{$\bm U_r^*$};
  \draw (1.5,1.5) node{$\bm U_r^{\nabla}$};  
  \draw (4.2,1.2) node{\bf plastic-fluid phase};
  \draw (0,0)--(30:5) node[above]{\it right-facing plastic wave};
  \draw (0,0)--(60:5) node[above]{\it right-facing fluid wave};
  \draw[thick] (0,0)--(110:4.8) node[above]{\it interface};
 \end{tikzpicture}
   \caption{Wave structure of the plastic-fluid phase in the $\xi-\tau$ 
space.}
  \label{fig:Riemann-plastofluid}
 \end{minipage}
  \begin{minipage}[b]{.49\textwidth}
  \begin{tikzpicture}[scale=.65]
  \tikzset{font={\fontsize{9pt}{12}\selectfont}}
  \fill [lightgray,opacity=.5] (0,0)--(4.4,0) arc(0:110:4.4);
  \draw[very thick,-triangle 45](-5.5,0)--(5.5,0)node[right]
  {\normalsize $\xi$};
  \draw[very thick,-triangle 45] (0,0)--(0,5.5) node[above]
  {\normalsize $\tau$};
  \draw (2.7,0.3) node{right initial state $\bm U_r$};
  \draw (-.3,1.8) node{$\bm U_r^*$};
  \draw (1.0,1.8) node{$\bm U_r^{\nabla}$}; 
  \draw (1.8,1.5) node{$\bm U_r^{\Delta}$};  
  \draw (4.2,1.2) node{\bf elas-plas-fluid phase};
  \draw (0,0)--(30:5) node[above]{\it right-facing elastic wave};
  \draw (0,0)--(50:5) node[above]{\it right-facing plastic wave};
  \draw (0,0)--(70:5) node[above]{\it right-facing fluid wave};
  \draw[thick] (0,0)--(110:4.8) node[above]{\it interface};
 \end{tikzpicture}
   \caption{Wave structure of the elastic-plastic-fluid solid phase in the 
$\xi-\tau$ 
space.}
  \label{fig:Riemann-elas-plas-fluid}
 \end{minipage}
\end{figure}

\subsection{Solution in the elastic-plastic-fluid phase}
If the solid undergoes an elastic-plastic-fluid phase transition, shown in Fig. 
\ref{fig:Riemann-elas-plas-fluid}, there exist acoustic elastic, plastic and 
fluid rarefaction waves or shock waves on the isentropic curves or Hugoniot 
loci, which are distinguished by the elastic limit and plastic limit, 
respectively.

\begin{itemize}
\item[-] {\bf Elastic-plastic-fluid rarefaction wave}

If $q_k^* \le q_{_\mathscr {PT}} < q_{_\mathscr T} \le q_k$, the acoustic 
elastic, plastic and fluid wave are all rarefaction waves,
\begin{align*}
u_k^*-u_k&=\displaystyle
\int_{q_k}^{q_{_\mathscr{T}}}
\left(\rho^2c^2+\dfrac{4\beta_k^{_\mathscr E}}{3}\right)^{-1/2}\dd q +
\displaystyle\int_{q_{_\mathscr{T}}}^{q_{_\mathscr{PT}}}
\left(\rho^2c^2+\dfrac{4\beta_k^{_\mathscr P}}{3}\right)^{-1/2}\dd q +
\displaystyle\int_{q_{_\mathscr{PT}}}^{q_k^*}
\dfrac{1}{\rho c}\dd q, \\
\rho_k^*-\rho_k&=\displaystyle
\int_{q_k}^{q_{_\mathscr{T}}}
\left(c^2+\dfrac{4\beta_k^{_\mathscr E}}{3\rho^2}\right)^{-1}\dd q
+\displaystyle\int_{q_{_\mathscr{T}}}^{q_{_\mathscr{PT}}}
\left(c^2+\dfrac{4\beta_k^{_\mathscr P}}{3\rho^2}\right)^{-1}\dd q +
\displaystyle\int_{q{_{_\mathscr{PT}}}}^{q_k^*}
\dfrac{1}{c^2}\dd q.
\end{align*}

\item[-] {\bf Elastic-plastic-fluid shock wave}

If $q_k^*>q_{_\mathscr{PC}}>q_{_\mathscr{C}}>q_k$, then the acoustic
elastic, plastic and fluid wave are all shock waves,
\begin{equation}
\small
\begin{aligned}
\label{eq:hug_fp}
  u_k^*-u_k &= \left(-(q_{_\mathscr{C}}-q_k)
    \left(\dfrac{1}{\rho_{_\mathscr{C}}}
          -\dfrac{1}{\rho_k}\right)\right)^{1/2} \\ 
       &+\left(-(q_{_\mathscr{PC}}-q_{_\mathscr{C}})
       \left(\dfrac{1}{\rho_{_\mathscr{PC}}}
          -\dfrac{1}{\rho_{_\mathscr{C}}}\right)\right)^{1/2} 
      +\left(-(q_k^*-q_{_\mathscr{PC}})
      \left(\dfrac{1}{\rho_k^*}
          -\dfrac{1}{\rho_{_\mathscr{PC}}}\right)\right)^{1/2},\\
   e_k(p_k^*,\rho_k^*) &- e_k(p_k,\rho_k)
  +\dfrac{1}{2}(p_k^*+p_{_\mathscr{PC}})
  \left(\dfrac{1}{\rho_k^*}
  -\dfrac{1}{\rho_{_\mathscr{PC}}}\right) \\
&+\dfrac{1}{2}(p_{_\mathscr{PC}}+p_{_\mathscr{C}})
  \left(\dfrac{1}{\rho_{_\mathscr{PC}}}
  -\dfrac{1}{\rho_{_\mathscr{C}}}\right) 
  +\dfrac{1}{2}(p_{_\mathscr{C}}+p_k)
   \left(\dfrac{1}{\rho_{_\mathscr{C}}}
  -\dfrac{1}{\rho_k}\right)=0.
\end{aligned}
\end{equation}

Similar to the elastoplastic solid phase, we define 
$\varPhi^{_\mathscr {EPF}}_k(q,\rho)$ to relate $q$ and $\rho$ on the 
elastic-plastic-fluid Hugoniot locus
\begin{equation}
\varPhi^{_\mathscr {EPF}}_k(q,\rho):=
\varPhi^{_\mathscr {E}}_k(q_{_\mathscr {C}},\rho_{_\mathscr {C}})+
\varPhi^{_\mathscr {PC}}_k(q_{_\mathscr {PC}},\rho_{_\mathscr {PC}})+
\varPhi^{_\mathscr {F}}_k(q,\rho),\quad
q>q_{_\mathscr {C}}>q_{_\mathscr {PC}}>q_k,
\label{eq:phi_epf}
\end{equation}
where 
\begin{align*}
\varPhi^{_\mathscr{PC}}_k(q_{_\mathscr{PC}},\rho_{_\mathscr{PC}}):&=
\varGamma_k(\rho_{_\mathscr{C}})\rho_{_\mathscr{C}}
\left(q_{_\mathscr{PC}}
+S_{_\mathscr{PC}}-h_k(\rho_{_\mathscr{PC}})\right)
-\varGamma_k(\rho_{_\mathscr{PC}}) \rho_{_\mathscr{PC}} 
 (p_{_\mathscr{C}}-h_k(\rho_{_\mathscr{C}})) \\
&-\dfrac{1}{2}\varGamma_k(\rho_{_\mathscr{C}})(q_{_\mathscr{PC}}
+S_{_\mathscr{PC}}+p_{_\mathscr{C}})
\varGamma_k(\rho_{_\mathscr{PC}})
(\rho_{_\mathscr{PC}}-\rho_{_\mathscr{C}}).
\end{align*}

Similarly, we have the following results on the function 
$\varPhi^{_\mathscr {EPF}}_k(q,\rho)$ by a simple calculus
\begin{lemma}\label{thm:phiepf}
The Hugoniot function $\varPhi^{_\mathscr {EPF}}_k(q,\rho)$ defined in
\eqref{eq:phi_epf} satisfies the following properties: 
1). $\varPhi^{_\mathscr {EPF}}_k(q,\rho_k)>0$;~
2). $\varPhi^{_\mathscr {EPF}}_k(q,\rho_{\max})<0$;~
3). ${\partial\varPhi^{_\mathscr {EPF}}_k}(q,\rho)/{\partial \rho}<0$;~
4). ${\partial^2\varPhi^{_\mathscr {EPF}}_k}(q,\rho)/{\partial \rho^2}<0$ if 
$\varGamma''_k(\rho)=0$.
\end{lemma}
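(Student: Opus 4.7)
The plan is to imitate the proofs of Lemma \ref{thm:phie} and Lemma \ref{thm:phiep}, exploiting the additive structure of the three-term decomposition in \eqref{eq:phi_epf}. The crucial observation is that the first two summands $\varPhi^{_\mathscr E}_k(q_{_\mathscr C},\rho_{_\mathscr C})$ and $\varPhi^{_\mathscr{PC}}_k(q_{_\mathscr{PC}},\rho_{_\mathscr{PC}})$ are evaluated at the intermediate states prescribed by \eqref{eq:elastic_rho} and its plastic analogue. Since these intermediate states are determined entirely by the initial data $(\rho_k, p_k, S_k)$, both summands are constants with respect to the arguments $(q,\rho)$ of $\varPhi^{_\mathscr{EPF}}_k$, and in fact vanish identically along the Hugoniot chain (by the defining relations at the elastic and plastic yield limits, which are precisely $\varphi^{_\mathscr E}_k = \varphi^{_\mathscr{SE}}_k$ at $(\rho_{_\mathscr C}, q_{_\mathscr C})$ and its counterpart at $(\rho_{_\mathscr{PC}}, q_{_\mathscr{PC}})$). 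Consequently the entire $\rho$-dependence of $\varPhi^{_\mathscr{EPF}}_k(q,\rho)$ comes from the last summand $\varPhi^{_\mathscr F}_k(q,\rho)$, which is structurally identical to $\varPhi^{_\mathscr E}_k$ in \eqref{eq:phi_se} except that the reference state has been shifted to $(\rho_{_\mathscr{PC}}, p_{_\mathscr{PC}}, S_{_\mathscr{PC}})$ and the deviatoric coefficient is effectively zero in the fluid regime.

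For properties 3 and 4, I would differentiate $\varPhi^{_\mathscr F}_k(q,\rho)$ with respect to $\rho$ in direct parallel to Lemma \ref{thm:phie}. The first derivative splits into a piece proportional to $h_k'(\rho)$, a piece proportional to $(\rho\varGamma_k(\rho))'$, and a piece of the form $\tfrac{1}{2}\varGamma_k(\rho_{_\mathscr{PC}})(q+S_{_\mathscr F}+p_{_\mathscr{PC}})\bigl(\varGamma_k(\rho) + \varGamma_k'(\rho)(\rho-\rho_{_\mathscr{PC}})\bigr)$. Assumption (C3) gives $h_k'\ge 0$, and (C1) yields $(\rho\varGamma_k)'\ge 0$ together with $\varGamma_k'\le 0$; combined with the positivity of $\varGamma_k(\rho_{_\mathscr{PC}})\rho_{_\mathscr{PC}}$ and the sign convention $\rho\ge\rho_{_\mathscr{PC}}$ on the admissible compression branch, every piece carries the right sign so that $\partial\varPhi^{_\mathscr{EPF}}_k/\partial\rho<0$. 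Differentiating once more and imposing $\varGamma_k''(\rho)=0$ eliminates the only term whose sign is not controlled by (C1), after which $(\rho\varGamma_k)''\ge 0$ from (C1) and $h_k''\ge 0$ from (C3) force $\partial^2\varPhi^{_\mathscr{EPF}}_k/\partial\rho^2<0$. Unlike Lemma \ref{thm:phie}, no extra lower bound on $h_k''$ is needed here because the fluid phase carries no deviatoric coupling (the term scaling with $\beta_k/\rho^3$ is absent).

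Properties 1 and 2 then follow from the corresponding properties of $\varPhi^{_\mathscr F}_k$. At $\rho=\rho_k$, the fluid-phase contribution is strictly positive by the same substitution argument that established Property 1 in Lemma \ref{thm:phie} (using $h_k(\rho_k)\le p_k$ and $\varGamma_k(\rho_k)\rho_k>0$), while the two constant summands vanish on the Hugoniot chain, so the sum is positive. At $\rho=\rho_{\max}$, the growth of $h_k(\rho)$ forced by (C3), together with the boundedness $\varGamma_k\le\varGamma_\infty+2$ from (C2), drives $\varPhi^{_\mathscr F}_k$ negative exactly as in Property 2 of Lemma \ref{thm:phie}.

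The main obstacle I anticipate is the careful bookkeeping needed for Property 1: one must unwind the yield definitions of $\rho_{_\mathscr C}$ and $\rho_{_\mathscr{PC}}$ and verify that the two constant summands truly vanish identically along the Hugoniot chain, rather than merely being small. Once that reduction is performed, the remaining arguments are routine sign tracking under the monotonicity and convexity hypotheses (C1)--(C3), mirroring the proof of Lemma \ref{thm:phie} almost verbatim with the replacements $\beta_k^{_\mathscr E}\leftarrow 0$ and $(\rho_k,p_k,S_k)\leftarrow(\rho_{_\mathscr{PC}},p_{_\mathscr{PC}},S_{_\mathscr{PC}})$.
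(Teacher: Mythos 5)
Your proposal is correct and matches the argument the paper intends (the paper itself omits this proof, appealing to ``a simple calculus'' analogous to Lemma \ref{thm:phie}): the first two summands in \eqref{eq:phi_epf} are constants in $(q,\rho)$ --- indeed zero, since $p_{_\mathscr C}$ and $p_{_\mathscr{PC}}$ are defined precisely as the roots of the corresponding Hugoniot relations at the yield densities --- so all four properties reduce to sign-tracking on the fluid term $\varPhi^{_\mathscr F}_k(q,\rho)$ under \textbf{(C1)}--\textbf{(C3)}, exactly as you describe. Your observation that the absence of the $\beta_k/\rho^3$ deviatoric coupling in the fluid branch is what lets the hypothesis of property 4 weaken to $\varGamma_k''(\rho)=0$ alone is the right explanation for the difference from Lemmas \ref{thm:phie} and \ref{thm:phiep}.
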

\end{itemize}

\subsection{Solution of the Riemann problem}

For the hydro-elastoplastic solid Riemann problem, the following compatibility 
conditions are imposed across the interface
\[
 \begin{aligned}
  u_l^* &= u_r^*, \\
  p_l^*-S_l^* &= p_r^*-S_r^*.
 \end{aligned}
\]

Let $q^*=p_l^*-S_l^*=p_r^*-S_r^*$. Equating the interface normal velocity 
$u^*=u_l^*= u_r^*$ yields
\[
u^*=u_l-f_l(q^*)=u_r+f_r(q^*),
\]
where the expressions of $f_k(q),~f_k'(q),~f_k''(q)~(k=l,r)$ for each phase are 
collected in Tab. \ref{tab:coe_f}. Here ``$|$'' denotes the interface, the 
superscript ``$S$'' and ``$R$'' stand for the shock wave and rarefaction wave, 
respectively. $\varPhi^m_k(q,\rho)$ denotes the algebraic equation of the 
Hugoniot locus for the shock wave, and $\chi_k^m(q,\rho)$ denotes its slope, 
where $m=\mathscr {F}, \mathscr {E}, \mathscr {P}, \mathscr {EP}, \mathscr 
{PF}, \mathscr {EPF}$. ``$\mathscr F$'', ``$\mathscr E$'', ``$\mathscr P$'', 
``$\mathscr {EP}$'', ``$\mathscr {PF}$'' and  ``$\mathscr {EPF}$'' denote the 
types of acoustic wave in hydro-elastoplastic solid, which is elastic wave, 
plastic wave, fluid wave, elastic-plastic wave, plastic-fluid wave and 
elastic-plastic-fluid wave, respectively. 

Therefore, the interface normal stress $q^*$ is exactly the zero of the
following \emph{stress function}
\begin{equation}
f(q) := f_l(q)+f_r(q) + u_r - u_l.
\label{eq:deff}
\end{equation}
And the interface velocity $u^*$ can be determined from
\[
u^*=\dfrac{1}{2}(u_l+u_r+f_r(q^*)-f_l(q^*)).
\label{eq:usdef}
\]

The behavior of $f_k(q)$ is related to the existence and uniqueness of the
solution of the Riemann problem. We claim on $f_k(q)$ that 
\begin{lemma}\label{thm:propf}
Assume that the conditions \textbf{(C1)-(C3)} hold for $\varGamma_k(\rho)$ and 
$h_k(\rho)$, the function $f_k(q)$ is monotonically increasing and concave, i.e.
  \[
  f_k'(q)>0 \quad \text{ and } \quad f_k''(q)<0,
  \]
  if the Hugoniot function is concave with respect to the density,
  i.e. ${\partial^2 \varPhi^m_k(q,\rho)}/{\partial \rho^2}<0$.
\end{lemma}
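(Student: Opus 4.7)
The plan is to prove the lemma by case analysis on the wave type, since $f_k(q)$ is defined piecewise in Tab.~\ref{tab:coe_f} according to whether each acoustic wave is a rarefaction or a shock, and according to which phase(s) it traverses. I would treat the rarefaction pieces and the shock pieces separately, and then combine them in the split-wave regimes.

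For a rarefaction piece, $f_k(q)$ is an integral with integrand of the form $(\rho^2 c^2 + 4\beta/3)^{-1/2}$, where $\beta\in\{0,\beta_k^{_\mathscr E},\beta_k^{_\mathscr P}\}$ according to whether the segment is fluid, elastic, or plastic. Positivity of the integrand gives $f_k'(q)>0$ immediately. For concavity I would differentiate once more, using the isentropic chain rule $\mathrm{d}\rho/\mathrm{d}q = \rho^2/(\rho^2 c^2 + 4\beta/3) > 0$ obtained from $\mathrm{d}p = c^2\,\mathrm{d}\rho$ together with the elastic/plastic relation $\mathrm{d}S = -(4\beta/(3\rho^2))\,\mathrm{d}\rho$; the sign of $f_k''(q)$ then reduces to that of $-\mathrm{d}(\rho^2 c^2)/\mathrm{d}\rho$. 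Conditions (C1)--(C3) on $\varGamma_k$ and $h_k$ ensure $\mathrm{d}(\rho^2 c^2)/\mathrm{d}\rho>0$, exactly as established in the fluid analysis of \cite{Lichen2018}; the extra $4\beta/3$ only strengthens the resulting inequality.

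For a shock piece, I would work with the closed-form expression $f_k(q)=\sqrt{-(q-q_k)(1/\rho-1/\rho_k)}$ together with the implicit relation $\varPhi_k^m(q,\rho)=0$ for the appropriate $m$. Implicit differentiation gives $\mathrm{d}\rho/\mathrm{d}q = -(\partial \varPhi_k^m/\partial q)/(\partial \varPhi_k^m/\partial \rho)$; the denominator is negative by property 3) of Lemmas \ref{thm:phie}--\ref{thm:phiepf}, and a direct check on \eqref{eq:phi_se}, \eqref{eq:phi_sp}, \eqref{eq:phi_pf}, \eqref{eq:phi_epf} shows that the numerator is positive, so $\mathrm{d}\rho/\mathrm{d}q>0$. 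Differentiating $f_k^2=(q-q_k)(1/\rho_k-1/\rho)$ once yields a manifestly positive expression for $f_k'(q)$. A further differentiation, in which $\mathrm{d}^2\rho/\mathrm{d}q^2$ is obtained by differentiating the identity $\varPhi_k^m(q,\rho(q))=0$ a second time, leads to an expression for $f_k''(q)$ whose sign is controlled by $\partial^2\varPhi_k^m/\partial\rho^2$; the hypothesis $\partial^2\varPhi_k^m/\partial\rho^2<0$ then forces $f_k''(q)<0$.

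In the split-wave regimes (elastoplastic, plastic-fluid, elastic-plastic-fluid), $f_k(q)$ is a sum of contributions from consecutive rarefaction and/or shock segments, each already covered above; summation preserves both monotonicity and concavity. The principal obstacle lies in the shock case: the second-order implicit differentiation produces several cross terms whose individual signs are not transparent, and the algebra must be carried out carefully so that only the controlling term $\partial^2\varPhi_k^m/\partial\rho^2$ remains. This is precisely where conditions 4) of Lemmas \ref{thm:phie}, \ref{thm:phiep}, \ref{thm:phipf}, \ref{thm:phiepf}---the bounds on $h_k''$ or the vanishing of $\varGamma_k''$---are needed, and where the bulk of the bookkeeping in a full proof will concentrate.
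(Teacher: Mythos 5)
Your proposal is correct and follows essentially the same route as the paper, whose proof simply reads off the tabulated expressions for $f_k'(q)$ and $f_k''(q)$ in Tab.~\ref{tab:coe_f} and Tab.~\ref{tab:coe_s} and observes their signs; your case analysis (positive integrand for rarefactions, implicit differentiation of $\varPhi_k^m(q,\rho)=0$ for shocks, with the hypothesis $\partial^2\varPhi_k^m/\partial\rho^2<0$ controlling the sign of $f_k''$) is exactly how those table entries are obtained and verified. The only cosmetic difference is that in the split-wave regimes the earlier segments terminate at fixed limit states and hence contribute constants, so only the final segment actually enters the derivatives, but this does not affect your conclusion.
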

\begin{proof}
  The first and second derivatives of $f_k(q)$ can be found in Tab.
\ref{tab:coe_f}. The result then follows by a direct observation.
\end{proof}

Here we provide a short proof of the results for the Riemann problem with 
Mie-Gr{\"u}neisen EOS and hydro-elastoplastic constitutive law in the following 
theorem.

\begin{theorem}\label{thm:unique}
The Riemann problem \eqref{eq:deff} has a unique solution (in the class of
admissible shocks, interfaces and rarefaction waves separating constant states)
if and only if the initial states satisfy the constraint
\begin{equation}
 u_r-u_l<\displaystyle\int_{q_{l,\min}}^{q_l}\dfrac{1}{\rho c}{\mathrm dq}+
\displaystyle\int_{q_{r,\min}}^{q_r}\dfrac{1}{\rho c}{\mathrm dq},
 \label{eq:vacuum}
 \end{equation}
where $q_{l,\min},~q_{r,\min}$ are the cut-off stresses in tension for each 
hydro-elastoplastic solid.
\end{theorem}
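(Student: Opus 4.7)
My plan is to reduce the question to a monotonicity/intermediate-value argument for the scalar function $f(q)=f_l(q)+f_r(q)+u_r-u_l$ defined in \eqref{eq:deff}, exploiting the properties already established in Lemma \ref{thm:propf}. Since the wave structure on each side has been fully enumerated in the preceding subsections (elastic, elastic-plastic, plastic-fluid, elastic-plastic-fluid), the first step is to observe that $f_k$ is defined piecewisely in $q$ but is continuous at every phase-transition threshold (the values $q_{k,_{\mathscr{T}}}, q_{k,_{\mathscr{PT}}}$ in the rarefaction branch, and $q_{k,_{\mathscr{C}}}, q_{k,_{\mathscr{PC}}}$ in the shock branch), because both branches coincide with the initial value $q_k$ at the threshold; therefore $f$ is a continuous function of $q$ on the admissible interval.

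For uniqueness, I would invoke Lemma \ref{thm:propf} on each piece: provided the assumed convexity hypotheses on $h_k$ hold (so that $\partial^2 \varPhi^m_k/\partial\rho^2<0$ in every regime), the function $f_k$ is strictly increasing. Strict monotonicity of the sum $f$ then immediately gives at most one zero $q^*$; the corresponding $u^*$ is then uniquely recovered from the formula following \eqref{eq:deff}. It is important here to note that the one-sided derivatives of $f_k$ at each phase-transition threshold match, or at least that monotonicity is preserved across the joint, which follows from the fact that both pieces differentiate a common Riemann invariant at the junction point.

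For existence, I would apply the intermediate value theorem to $f$ on the interval $q\in(q_{\min},+\infty)$, where $q_{\min}=\max(q_{l,\min},q_{r,\min})$ is the largest cut-off in tension. On the compressive end, as $q\to+\infty$ both $f_l(q)$ and $f_r(q)$ tend to $+\infty$ along the elastic-plastic-fluid shock branches (since the Hugoniot shock relations give $f_k(q)\sim\bigl(-(q-q_k)(1/\rho-1/\rho_k)\bigr)^{1/2}$ and $\rho\to\rho_{\max}$ as $q$ grows, while $h_k(\rho)$ blows up by (C3)), so $f(q)\to+\infty$. On the tensile end, by the structure of the fluid-rarefaction integrals that govern the outer wave near the cut-off, one has
\[
\lim_{q\to q_{k,\min}^+} f_k(q) = -\int_{q_{k,\min}}^{q_k}\frac{1}{\rho c}\,\mathrm{d}q,
\]
so that $\lim_{q\to q_{\min}^+}f(q)=u_r-u_l - \bigl(\int_{q_{l,\min}}^{q_l}\frac{1}{\rho c}\mathrm{d}q+\int_{q_{r,\min}}^{q_r}\frac{1}{\rho c}\mathrm{d}q\bigr)$. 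The sign of this limit is exactly negative precisely when the vacuum condition \eqref{eq:vacuum} holds, which, combined with the $+\infty$ behaviour at the other endpoint and continuity, yields a (unique) root $q^*$; conversely, if \eqref{eq:vacuum} fails, $f>0$ throughout the admissible range and no admissible solution exists, corresponding physically to cavitation between the two media.

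The main obstacle, and the step I would spend the most care on, is verifying that the limit of $f_k$ at the tensile cut-off is captured \emph{uniformly across the phase regimes}. In the pure-fluid case this is the familiar formula, but in the hydro-elastoplastic setting the outer (fluid) rarefaction is preceded by plastic and elastic rarefactions whose contributions must be summed. I would therefore argue that for $q$ sufficiently close to $q_{k,\min}$ the state has passed through both $q_{k,_{\mathscr T}}$ and $q_{k,_{\mathscr{PT}}}$, so that $f_k(q)$ is given by the three-term rarefaction formula of the elastic-plastic-fluid subsection, and then observe that only the outer (fluid) integral is singular as $\rho\to 0$; the elastic and plastic contributions are finite at the cut-off and are already bundled into the definition of $q_{k,\min}$. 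This identification justifies the asymptotic formula above and hence the sharp condition \eqref{eq:vacuum}.
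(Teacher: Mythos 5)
Your proof follows essentially the same route as the paper's: strict monotonicity of $f$ from the positivity of $f_k'(q)$ in every wave regime, divergence $f(q)\to+\infty$ along the shock branch as $q\to+\infty$, and the sign of $f$ at $q_{\min}=\max(q_{l,\min},q_{r,\min})$ giving the necessary and sufficient vacuum condition via the intermediate value theorem. You are in fact more explicit than the paper about continuity of $f_k$ across the phase-transition thresholds and about why only the outer fluid rarefaction integral is the relevant contribution near the tensile cut-off, points the paper passes over with a bare ``or equivalently.''
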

\begin{proof}
We first notice that for the left- and right-facing waves, the derivative
$f_k'(q)$ in Tab. \ref{tab:coe_f} and Tab. \ref{tab:coe_s} is always positive, 
and as a result, the stress function $f(q)$ is monotonically increasing. 

Next we study the behavior of $f(q)$ when $q$ tends to infinity. Let 
$\tilde\rho$ represent the density such that $\varPhi^m_k(\tilde q,
\tilde\rho)=0$ for a given $\tilde q$, which is the equation relates $\rho$ and
$q$ along the Hugoniot locus. When the stress $q>\tilde q$, we have
$\rho>\tilde\rho$, according to the monotonicity of the Hugoniot locus, and thus
\[
f_k^2(q)=(q-q_k)\left(\dfrac{1}{\rho_k}-\dfrac{1}{\rho}\right)
>(q-q_k)\left(\dfrac{1}{\rho_k}-\dfrac{1}{\tilde\rho}\right).
\]
As a result, $f_k(q)$ tends to positive infinity as $q\rightarrow +\infty$ and
so does $f(q)$.

Based on the behavior of the function $f(q)$, a necessary and sufficient
condition for the interface stress $q^*>q_{\min}$ such that $f(q^*)=0$ to be
uniquely defined is given by
\[
f(q_{\min})=f_l(q_{\min})+f_r(q_{\min})+u_r-u_l<0,
\]
or equivalently, the constraint given by \eqref{eq:vacuum}, where 
$q_{\min}= \max({q_{l,\min},q_{r,\min}})$. This completes the proof of the
theorem.
\end{proof}

\begin{remark}
When the initial states violate the constraint \eqref{eq:vacuum}, the 
Riemann problem has no solution in the above sense. One can yet define
a solution by introducing a \emph{vacuum}. However, we are not going
to address this issue which is beyond the scope of our current study.
\end{remark}

\setlength{\tabcolsep}{1.4pt}
\begin{sidewaystable}[p]
 \centering
 \caption{Expressions of stress functions and their derivatives for rarefaction
waves.}
 \label{tab:coe_f}
\begin{threeparttable}
{\small 
\begin{tabular}{p{4em}ccccc}
  \toprule
   & $q$  &  $f_k(q)$  &  $f_k'(q)$ & $f_k''(q)$
   & Acoustic wave type\\
 \midrule
 \multirow{19}*{Solid}
 & $q_{_\mathscr {PT}} < q_{_\mathscr T} < q\le q_k$
 & $\displaystyle\int^{q}_{q_k}
   \left(\rho^2c^2+\dfrac{4}{3}\beta_k^{\mathscr E}\right)^{-1/2}\dd
q$ 
 & $\left(\rho^2c^2+\dfrac{4}{3}\beta_k^{\mathscr E}\right)^{-1/2}$
 & $-\rho\mathscr G\left(\rho^2c^2+\dfrac{4}{3}
    \beta_k^{\mathscr E}\right)^{-3/2}$
 & $\left(\left|_k^{R}\right.\right)^{\mathscr E}$ \\ [5mm]
 & $q_{_\mathscr {PT}} < q\le q_k< q_{_\mathscr T}$
 & $\displaystyle\int^{q}_{q_k}
   \left(\rho^2c^2+\dfrac{4}{3}\beta_k^{\mathscr P}\right)^{-1/2}\dd q$
 & $\left(\rho^2c^2+\dfrac{4}{3}\beta_k^{\mathscr P}\right)^{-1/2}$ 
 & $-\rho\mathscr G\left(\rho^2c^2+\dfrac{4}{3}
    \beta_k^{\mathscr P}\right)^{-3/2}$
 & $\left(\left|_k^{R}\right.\right)^{\mathscr{P}}$ \\ [5mm]
 & $q\le q_k< q_{_\mathscr {PT}} < q_{_\mathscr {T}} $
 & $\displaystyle\int^{q}_{q_k} \dfrac{1}{\rho c}\dd q$
 & $\dfrac{1}{\rho c}$ 
 & $-\dfrac{\mathscr G}{\rho^2c^3}$
 & $\left(\left|_k^{R}\right.\right)^{\mathscr{F}}$ \\ [5mm]
  & $q_{_\mathscr {PT}} < q\le q_{_\mathscr T}\le q_k$
 & $\begin{aligned}
   &\displaystyle\int^{q_{_\mathscr T}}_{q_k}
   \left(\rho^2c^2+\dfrac{4}{3}\beta_k^{\mathscr E}\right)^{-1/2}\dd q+ \\ 
   &\displaystyle\int^{q}_{q_{_\mathscr T}}
   \left(\rho^2c^2+\dfrac{4}{3}\beta_k^{\mathscr P}\right)^{-1/2}\dd q
   \end{aligned}$
 & $\left(\rho^2c^2+\dfrac{4}{3}\beta_k^{\mathscr P}\right)^{-1/2}$ 
 & $-\rho\mathscr G\left(\rho^2c^2+\dfrac{4}{3}
    \beta_k^{\mathscr P}\right)^{-3/2}$
 & $\left(\left|_k^{R}\right.\right)^{\mathscr{EP}}$ \\ [5mm]
 & $q \le q_{_\mathscr{PT}} < q_k \le q_{_\mathscr{T}}$
 & $\begin{aligned} 
   &\displaystyle\int^{q_{_\mathscr{PT}}}_{q_k}
   \left(\rho^2c^2+\dfrac{4}{3}\beta_k^{\mathscr P}\right)^{-1/2}\dd q+\\   
   &\displaystyle\int^{q}_{q_{_\mathscr{PT}}}
   \dfrac{1}{\rho c}\dd q
   \end{aligned}$
 & $\dfrac{1}{\rho c}$ 
 & $-\dfrac{\mathscr G}{\rho^2c^3}$
 & $\left(\left|_k^{R}\right.\right)^{\mathscr{PF}}$ \\ [5mm]
 & $q\le q_{_\mathscr{PT}}< q_{_\mathscr{T}}\le q_k$
 & $\begin{aligned}
   &\displaystyle\int^{q_{_\mathscr{T}}}_{q_k}
   \left(\rho^2c^2+\dfrac{4}{3}\beta_k^{\mathscr E}\right)^{-1/2}\dd q+\\ 
   &\displaystyle\int^{q_{_\mathscr{PT}}}_{q_\mathscr{T}}
   \left(\rho^2c^2+\dfrac{4}{3}\beta_k^{\mathscr P}\right)^{-1/2}\dd q+\\   
   &\displaystyle\int^{q}_{q_{_\mathscr{PT}}}
   \dfrac{1}{\rho c}\dd q
   \end{aligned}$
 & $\dfrac{1}{\rho c}$ 
 & $-\dfrac{\mathscr G}{\rho^2c^3}$
 & $\left(\left|_k^{R}\right.\right)^{\mathscr{EPF}}$ \\ [5mm]
 \bottomrule
\end{tabular}
 }
\end{threeparttable}   
\end{sidewaystable}

\setlength{\tabcolsep}{1.4pt}
\begin{sidewaystable}[p]
 \centering
 \caption{Expressions of stress functions and their derivatives for shock
waves.}
 \label{tab:coe_s}
\begin{threeparttable}
{\small 
\begin{tabular}{p{4em}ccccc}
  \toprule
   & $q$  &  $f_k(q)$  &  $f_k'(q)$ & $f_k''(q)$
   & Acoustic wave type \\
 \midrule
 \multirow{19}*{Solid}
 & $q_k<q<q_{_\mathscr C}<q_{_\mathscr {PC}}$
 & $\left((q-q_k)\left(\dfrac{1}{\rho_k}-\dfrac{1}{\rho}\right)
    \right)^{1/2}$
 & $\dfrac{1}{2f_k(q)}\left(\dfrac{1}{\rho_k}-
   \dfrac{1}{\rho}+\dfrac{q-q_k}
   {\rho^2\chi_{_\mathscr E}}\right)$  
 & $\begin{aligned}
   &-\dfrac{1}{4f_k^3(q)}\left(\dfrac{2(q-q_k)^2}
   {\rho^2\chi_{_\mathscr E}^2}
   \left(\dfrac{1}{\rho_k}-\dfrac {1}{\rho }\right)\right.+ \\
   &\left.\left(\dfrac{2}{\rho}+\left.
   \pd{\chi_{_\mathscr E}}{q}
   \right|_{\varPhi_k^{\mathscr E}}\right)
   \left(\dfrac{1}{\rho_k}-\dfrac{1}{\rho}-\dfrac{q-q_k}
   {\rho^2\chi_{_\mathscr E}}\right)^2\right) 
   \end{aligned}$
 & $\left(\left|_k^{S}\right.\right)^{\mathscr E}$\\ [5mm]
 & $q_{_\mathscr C}\le q_k<q<q_{_\mathscr {PC}}$
 & $\left((q-q_k)\left(\dfrac{1}{\rho_k}-\dfrac{1}{\rho}\right)
   \right)^{1/2}$
 & $\dfrac{1}{2f_k(q)}\left(\dfrac{1}{\rho_k}-
   \dfrac{1}{\rho}+\dfrac{q-q_k}{\rho^2
   \chi_{_\mathscr{P}}}\right)$
 & $\begin{aligned}
   &-\dfrac{1}{4f_k^3(q)}\left(\dfrac{2(q-q_k)^2}
   {\rho^2\chi_{_\mathscr{P}}^2}  
   \left(\dfrac{1}{\rho_k}-\dfrac {1}{\rho }\right)\right.+ \\
   &\left.\left(\dfrac{2}{\rho}+\left.
   \pd{\chi_{_\mathscr{P}}}{q}
   \right|_{\varPhi_k^\mathscr{P}}\right)
   \left(\dfrac{1}{\rho_k}-\dfrac{1}{\rho}-\dfrac{q-q_k}
   {\rho^2\chi_{_\mathscr{P}}}\right)^2\right) \end{aligned}$
 & $\left(\left|_k^{S}\right.\right)^{\mathscr{P}}$\\ [5mm]
 & $q_{_\mathscr {PC}}\le q_k<q<q_{_\mathscr {PC}}$
 & $\left((q-q_k)\left(\dfrac{1}{\rho_k}
   -\dfrac{1}{\rho}\right)\right)^{1/2}$ 
 & $\dfrac{1}{2f_k(q)}\left(\dfrac{1}{\rho_k}-
   \dfrac{1}{\rho}+\dfrac{q-q_k}{\rho^2\chi_{_\mathscr F}}\right)$ 
 & $\begin{aligned}
   &-\dfrac{1}{4f_k^3(q)}\left(\dfrac{2(q-q_k)^2}
   {\rho^2\chi_{_\mathscr F}^2}  
   \left(\dfrac{1}{\rho_k}-\dfrac {1}{\rho }\right)\right.+ \\
   &\left.\left(\dfrac{2}{\rho}+\left.\pd{\chi_{_\mathscr F}}{q}
   \right|_{\varPhi_k^{\mathscr F}}\right)
   \left(\dfrac{1}{\rho_k}-\dfrac{1}{\rho}-\dfrac{q-q_k}
   {\rho^2\chi_{_\mathscr F}}\right)^2\right) \end{aligned}$
 &  $\left(\left|_k^{S}\right.\right)^{\mathscr{F}}$\\[5mm] 
& $q_k<q_{_\mathscr C}\le q<q_{_\mathscr {PC}}$ 
 & $\begin{aligned}
   &\left((q_{_\mathscr C}-q_k)\left(\dfrac{1}{\rho_k}-\dfrac{1}
   {\rho_{c}}\right)\right)^{1/2} \\
   +&\left((q-q_{_\mathscr C})
   \left(\dfrac{1}{\rho_{_\mathscr C}}-\dfrac{1}{\rho}\right)
   \right)^{1/2} \end{aligned} $ 
 & $\dfrac{1}{2f_k(q)}\left(\dfrac{1}{\rho_{_\mathscr C}}-
   \dfrac{1}{\rho}+\dfrac{q-q_{_\mathscr C}}
   {\rho^2\chi_{_\mathscr{EP}}}\right)$ 
 & $\begin{aligned}
   &-\dfrac{1}{4f_k^3(q)}\left(\dfrac{2(q-q_{_\mathscr C})^2}
   {\rho^2\chi_{_\mathscr{EP}}^2}  
   \left(\dfrac{1}{\rho_{_\mathscr C}}-\dfrac {1}{\rho }\right)\right.+ \\
   &\left.\left(\dfrac{2}{\rho}
   +\left.\pd{\chi_{_\mathscr{EP}}}{q}
   \right|_{\varPhi_k^\mathscr{EP}}\right)
   \left(\dfrac{1}{\rho_{_\mathscr C}}
   -\dfrac{1}{\rho}-\dfrac{q-q_{_\mathscr C}}
   {\rho^2\chi_{_\mathscr{EP}}}\right)^2\right) \end{aligned}$
 & $\left(\left|_k^{S}\right.\right)^{\mathscr{EP}}$ \\ [5mm]
& $q_{_\mathscr{C}} < q_k<q_{_\mathscr{PC}} < q  $ 
 & $\begin{aligned}
   &\left((q_{_\mathscr{PC}}-q_k)
    \left(\dfrac{1}{\rho_k}-\dfrac{1}
   {\rho_{_\mathscr{PC}}}\right)\right)^{1/2} \\
   +&\left((q-q_{_\mathscr{PC}})
   \left(\dfrac{1}{\rho_{_\mathscr{PC}}}-\dfrac{1}{\rho}\right)
   \right)^{1/2} \end{aligned} $ 
 & $\dfrac{1}{2f_k(q)}\left(\dfrac{1}{\rho_{_\mathscr{PC}}}-
   \dfrac{1}{\rho}+\dfrac{q-q_{_\mathscr{PC}}}
   {\rho^2\chi_{_\mathscr{PF}}}\right)$ 
 & $\begin{aligned}
   &-\dfrac{1}{4f_k^3(q)}\left(\dfrac{2(q-q_{_\mathscr {PC}})^2}
   {\rho^2\chi_{_\mathscr{PF}}^2}  
   \left(\dfrac{1}{\rho_{_\mathscr{PC}}}-\dfrac {1}{\rho }\right)\right.+ \\
   &\left.\left(\dfrac{2}{\rho}
   +\left.\pd{\chi_{_\mathscr{PF}}}{q}
   \right|_{\varPhi_k^\mathscr{PF}}\right)
   \left(\dfrac{1}{\rho_{_\mathscr{PC}}}
   -\dfrac{1}{\rho}-\dfrac{q-q_{_\mathscr{PC}}}
   {\rho^2\chi_{_\mathscr{PF}}}\right)^2\right) \end{aligned}$
 & $\left(\left|_k^{S}\right.\right)^{\mathscr{PF}}$ \\ [5mm]
 & $q_k<q_{_\mathscr{PC}} < q_{_\mathscr{C}} < q$ 
 & $\begin{aligned}
   &\left((q_{_\mathscr{C}}-q_k)\left(\dfrac{1}{\rho_k}-\dfrac{1}
   {\rho_{_\mathscr{C}}}\right)\right)^{1/2} \\
  +&\left((q_{_\mathscr{PC}}-q{_{_\mathscr{C}}})
    \left(\dfrac{1}{\rho_{_\mathscr{C}}}-\dfrac{1}
   {\rho_{_\mathscr{PC}}}\right)\right)^{1/2} \\
   +&\left((q-q_{_\mathscr{PC}})
   \left(\dfrac{1}{\rho_{_\mathscr{PC}}}-\dfrac{1}{\rho}\right)
   \right)^{1/2} \end{aligned} $ 
 & $\dfrac{1}{2f_k(q)}\left(\dfrac{1}{\rho_{_\mathscr{PC}}}-
   \dfrac{1}{\rho}+\dfrac{q-q_{_\mathscr{PC}}}
   {\rho^2\chi_{_\mathscr{EPF}}}\right)$ 
 & $\begin{aligned}
   &-\dfrac{1}{4f_k^3(q)}\left(\dfrac{2(q-q_{_\mathscr {PC}})^2}
   {\rho^2\chi_{_\mathscr{EPF}}^2}  
   \left(\dfrac{1}{\rho_{_\mathscr{PC}}}-\dfrac {1}{\rho }\right)\right.+ \\
   &\left.\left(\dfrac{2}{\rho}
   +\left.\pd{\chi_{_\mathscr{EPF}}}{q}
   \right|_{\varPhi_k^\mathscr{EPF}}\right)
   \left(\dfrac{1}{\rho_{_\mathscr{PC}}}
   -\dfrac{1}{\rho}-\dfrac{q-q_{_\mathscr{PC}}}
   {\rho^2\chi_{_\mathscr{EHF}}}\right)^2\right) \end{aligned}$
 & $\left(\left|_k^{S}\right.\right)^{\mathscr{EPF}}$ \\ [5mm]
 \bottomrule
\end{tabular}
 }
\end{threeparttable}   
\end{sidewaystable}

\section{Approximate Riemann Solver}\label{sec:aps}
Our algorithm of the Riemann solver is to find the unique zero of the stress
function $f(q)$ using the Newton-Raphson method \cite{Godunov1976}
\[
q_{n+1}=q_{n}-\dfrac{f(q_{n})}{f'(q_{n})}
=q_{n}-\dfrac{f_l(q_{n})+f_r(q_{n}) + u_r - u_l}
{f_l'(q_{n})+f_r'(q_{n})}.
\]

Unfortunately, there is generally no close-form expression for the stress
function $f(q)$ or its derivative $f'(q)$ for some complex equations of
state. Instead we perform the \emph{inexact Newton method}, which is formulated 
as
\begin{equation}
\left\{
\begin{aligned}
q_{n+1}&=q_n-\dfrac{F_n}{F_n'}=q_n-\dfrac{F_{n,l}+F_{n,r}+u_r-u_l}
{F'_{n,l}+F'_{n,r}},\\
u_{n}&=\dfrac{1}{2}(u_l+u_r+F_{n,r}-F_{n,l}),
\end{aligned}\right.
\label{eq:iterp2}
\end{equation}
where $F_{n,k}$ and $F'_{n,k}$ approximate $f_k(q_{n})$ and $f_k'(q_{n})$,
respectively.

To specify the sequences $F_{n,k}$ and $F'_{n,k}$, we compute the shock branch
using an iterative method, and the rarefaction branch through numerical
integration. It is natural to expect that the sequences $q_n$ and $u_n$ will
tend to $q^*$ and $u^*$ respectively, whenever the evaluation errors
$|F_{n,k}-f_k(q_n)|$ and $|F'_{n,k}-f'_k(q_n)|$ are going to zero, which
have been proved in our previous work \cite{Lichen2018}. The convergence is
guaranteed by \textit{a posteriori} control on the evaluation errors of
$f_k(q_n)$ and $f'_k(q_n)$, which depend on the residual of the algebraic
equation in the shock branch as well as the truncation error of the ordinary
differential equation in the rarefaction branch. Here we apply the
Newton-Raphson method to solve the Hugoniot loci, and the adaptive
Runge-Kutta-Fehlberg method \cite{Fehlberg1970} to solve the isentropic curves.

Precisely, if $q_n>q_k$, for the given $n$-th iterator $q_n$, we solve the
following algebraic equation
\begin{equation}
\varPhi^m_k(q_n,\tilde \rho_{n,k})=0,
\label{eq:algphi}
\end{equation}
to obtain $\tilde\rho_{n,k}$ to a prescribed tolerance by the Newton-Raphson
method
\[
\rho_{n,k,j+1}=\rho_{n,k,j}- \dfrac{\varPhi^m_k(q_n,\rho_{n,k,j})}
{\partial\varPhi^m_k(q_n,\rho_{n,k,j})/\partial\rho}.
\]

By Lemma \ref{thm:phie}, \ref{thm:phiep}, \ref{thm:phipf} and 
\ref{thm:phiepf}, we can naturally get the conclusion that the Newton-Raphson 
iteration for \eqref{eq:algphi} must converge for any initial guess 
$\rho>\rho_k$. Then the values of $F_{n,k}$ and $F'_{n,k}$ for the shock branch 
are thus taken as
\begin{align}
\label{eq:shockf}
  F_{n,k}&=\left((q_n-q_k)\left(\dfrac{1}{\rho_k}-\dfrac{1}
           {\tilde\rho_{n,k}}\right)\right)^{1/2},\\ 
\label{eq:shockdf}
  F'_{n,k}&=\dfrac{1}{2F_{n,k}}\left(
            \dfrac{1}{\rho_k}-\dfrac{1}{\tilde\rho_{n,k}}
            +\dfrac{q_n-q_k}{\rho_{n,k}^2~
            \chi_k^{m}(q_n,\tilde\rho_{n,k})}
            \right).
\end{align}

If, on the other hand, $q_n\le q_k$, then we solve the following system of the
initial value problem 
\begin{equation}
\label{ode:solid}
\left\{
 \begin{array}{ll}
   \dfrac{\mathrm d f_k(q)}{\mathrm d q}=
   \left(\rho^2c^2+\dfrac{4\beta^m_k}{3}\right)^{-1/2}, & 
   f_k|_{q=q_k}=0, \\
 \dfrac{\mathrm d\rho}{\mathrm d q}=
 \left(c^2+\dfrac{4\beta^m_k}{3\rho^2}\right)^{-1}, &
 \rho|_{q=q_k}=\rho_k,
 \end{array}\right.
\end{equation}
backwards until $q=q_n$ using the adaptive Runge-Kutta-Fehlberg method. 

When the initial states $\bm U_l, \bm U_r$ and the global tolerance $\epsilon_0$
are given, the whole procedure of the approximate Riemann solver for
\eqref{eq:deff} is as below.
\begin{mdframed}
\setlength{\parindent}{0pt}
\textbf{Step 1} Provide an initial estimate of the interface normal stress
\[
 q_{0} =
\dfrac{\rho_lc_lq_r+\rho_rc_rq_l+\rho_lc_l\rho_rc_r(u_l-u_r)}
{\rho_lc_l+\rho_rc_r}.
\]
	
\textbf{Step 2} Assume that the $n$-th iteration $q_{n}$ is obtained. Determine
the type of the left and right nonlinear waves.
	
(i) If $q_{n}>\max\{q_l,q_r\}$, then both nonlinear waves are shock waves.
	
(ii) If $\min\{q_l,q_r\}\le q_{n} \le \max\{q_l,q_r\}$, then one of the two
nonlinear waves is a shock wave, and the other is a rarefaction wave.
	
(iii) If $q_{n}<\min\{q_l,q_r\}$, then both nonlinear waves are rarefaction
waves. 

\textbf{Step 3} Evaluate $F_{n,k}$ and $F'_{n,k}$ according to the type of
nonlinear waves and the local evaluation error $\varepsilon_{n,k}$. 

(i) When the nonlinear wave is a rarefaction wave, estimate the local evaluation
error $\varepsilon_{n,k}$ according to the condition numbers of system
\eqref{ode:solid}, and calculate $F_{n,k}$ and $F'_{n,k}$ by using the adaptive 
Runge-Kutta-Fehlberg method. 

(ii) When the nonlinear wave is a shock wave, estimate the local residual of
the algebraic equation \eqref{eq:algphi} according to its condition number, and
get the corresponding $\tilde\rho_{n,k}$ by using the Newton-Raphson method.
Then calculate $F_{n,k}$ and $F'_{n,k}$ by \eqref{eq:shockf} and
\eqref{eq:shockdf}. 
 
\textbf{Step 4} Update the interface normal stress through
\[
q_{n+1}=q_{n}-\dfrac{F_{n,l}+F_{n,r} + u_r - u_l}{F'_{n,l}+F'_{n,r}}.
\]
	
\textbf{Step 5} Terminate whenever the relative change of the stress reaches the
prescribed tolerance $\epsilon_0$. The sufficiently accurate estimate $q_{n}$ is
then taken as the approximate interface normal stress $q^*$. Otherwise return to
Step 2.
	
\textbf{Step 6} Compute the interface velocity $u^*$ through
\[
u^*=\dfrac{1}{2}\left(u_l+u_r+F_{n,r}-F_{n,l}\right).
\]
\end{mdframed}


\section{Application on Multi-medium Interaction}\label{sec:sch} 

Now we consider the compressible multi-medium interaction problems described by 
an immiscible model in the domain $\Omega$. Two mediums are separated by a sharp 
interface $\Gamma(t)$ characterized by the zero of the level set function 
$\phi(\bm x,t)$. The region occupied by each medium can be expressed in terms of 
$\phi(\bm x,t)$
\[
\Omega^+(t):=\{\bm x\in\Omega~|~\phi(\bm x,t)> 0\}
\an
\Omega^-(t):=\{\bm x\in\Omega~|~\phi(\bm x,t)< 0\}.
\]
And the medium in each region is governed by the following governing equations
\begin{equation} 
  \dfrac{\partial \bm{U}}{\partial t} + 
  \nabla\cdot \bm F(\bm U)= 
  \bm 0,\quad \bm x\in \Omega^\pm (t),
  \label{eq:euler} 
\end{equation}
where
$\bm{U}=
[\rho, ~ \rho \bm u, ~ E]^\top$, 
$\bm{F(\bm U)}=
[\rho \bm u^\top, ~\rho \bm u\otimes \bm u-\bm\sigma,~E\bm
u^\top-\bm\sigma\cdot\bm u^\top]^\top$. 
Here $\bm u$ stands for the velocity vector, and other variables represent the 
same as that in \eqref{system:oneriemann}. The equation of state and 
constitutive law have been given in Section \ref{sec:rp}.

We extend the numerical scheme in Guo \textit{et al.} \cite{Guo2016} to the
hydro-elastoplastic problems, which is implemented on Eulerian grids. For 
completeness, we briefly sketch the main steps of the numerical scheme for the
multi-medium flow therein. The approximate Riemann solver we proposed is applied
to calculate the numerical flux at the phase interface in the overall numerical
scheme. The whole domain $\Omega$ is divided into a conforming mesh with simplex
cells, and the overall scheme is mainly divided into three steps:

\begin{itemize}
\item[(1).] {\bf Evolution of the interface}

The level set function is approximated by a continuous piecewisely linear
function, which satisfies
\begin{equation}
\dfrac{\partial \phi}{\partial t} +
\tilde u|\nabla\phi| = 0.
\label{eq:levelset:eov}
\end{equation}
Here $\tilde u$ denotes the normal velocity of the phase interface, where the
normal direction is chosen as the gradient of the level set function. 

The discretized level set function \eqref{eq:levelset:eov} is updated through
the characteristic line tracking method once the motion of the phase interface
is given. Due to the nature of the level set equation, it remains to specify the
normal velocity $\tilde u$ within a narrow band near the phase interface. This
can be achieved by firstly solving a multi-medium Riemann problem across the
phase interface and then extending the velocity field to the nearby region using
the harmonic extension technique of Di \textit{et al.} \cite{Di2007}. The
solution of the multi-medium Riemann problem has been elaborated in Section
\ref{sec:rp}.

In order to keep the property of the signed distance function, we solve the
following reinitialization equation 
\[
\begin{cases}
  \dfrac{\partial \psi}{\partial \tau} =
  \sgn(\psi_0)\cdot\left(1-\left|\nabla \psi \right|\right), \\
  \psi(\bm x,0)=\psi_0=\phi(\bm x, t),
\end{cases}
\]
until steady state using the explicitly positive coefficient scheme
\cite{Di2007}.

Once the level set function is updated until the $n$-th time level, we can
obtain the discretized phase interface $\Gamma_{h,n}$. A cell $\mathscr K_{i,n}$
is called an \emph{interface cell} if the intersection of $\mathscr K_{i,n}$ and
$\Gamma_{h,n}$, denoted as $\Gamma_{\mathscr K_{i,n}}$, is nonempty. Since the
level set function is piecewisely linear and the cell is simplex,
$\Gamma_{\mathscr K_{i,n}}$ must be a linear manifold in $\mathscr K_{i,n}$.
The interface $\Gamma_{h,n}$ further cuts the cell $\mathscr K_{i,n}$ and one
of its boundaries $\mathscr L_{ij,n}$ into two parts, which are represented as
$\mathscr K_{i,n}^\pm$ and $\mathscr L_{ij,n}^\pm$ respectively (may be an empty
set). The unit normal of $\Gamma_{\mathscr K_{i,n}}$, pointing from $\mathscr
K_{i,n}^-$ to $\mathscr K_{i,n}^+$, is denoted as $\bm n_{_{\mathscr K_{i,n}}}$.
These quantities can be readily computed from the geometries of the phase 
interface and cells. See Fig. \ref{fig:twophasemodel} for an illustration.

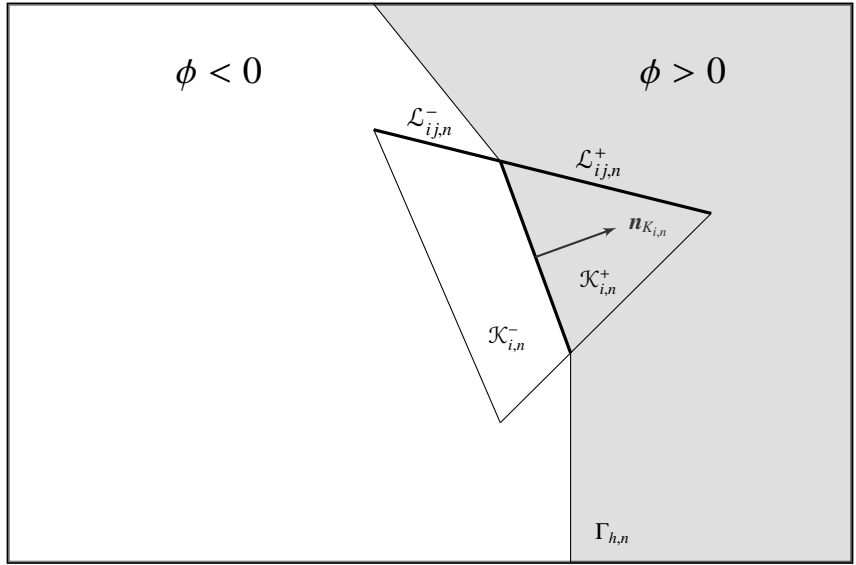
\begin{figure}[htb]
\centering
\begin{tikzpicture}[scale=1.85]
\tikzset{font={\fontsize{9pt}{12}\selectfont}}
\draw [very thick,black](0,0) rectangle (6,4);
\fill[fill=white,opacity=.5] 
(2.6,4)--(3.5,2.875)--(4,1.5)--(4,0)--(0,0)--(0,4)--cycle;
\fill[fill=lightgray,opacity=.5] 
(2.6,4)--(3.5,2.875)--(4,1.5)--(4,0)--(6,0)--(6,4)--cycle;
\draw  (2.6,4)--(3.5,2.875)--(4,1.5)--(4,0);
\draw [very thick] (3.5,2.875)--(4,1.5);
\draw [very thick] (3.5,2.875)--(2.6,3.1);
\draw [very thick] (3.5,2.875)--(5,2.5);
\draw (4,1.5)--(5,2.5);
\draw (4,1.5)--(3.5,1);
\draw (3.5,1)--(2.6,3.1);
\draw (1.5,3.5)  node[black]{\large $\phi<0$};
\draw (4.8,3.5)  node[black]{\large $\phi>0$}; 
\draw (3.55,1.6)  node[black]{$\mathscr K_{i,n}^-$};
\draw (4.2,2)  node[black]{$\mathscr K_{i,n}^+$};
\draw (3.0,3.15) node[black]{\footnotesize $\mathscr L_{ij,n}^-$};
\draw (4.2,2.85)  node[black]{\footnotesize $\mathscr L_{ij,n}^+$};
\draw [-latex',thick,black!80]
(3.75,2.1875)--++(0.4125*1.4,0.15*1.4) node[right]{$\bm n_{K_{i,n}}$};
\draw (4.3,0.2) node {$\Gamma_{h,n}$};
\end{tikzpicture}
\caption{Illustration of the fluid-solid interaction model.}
\label{fig:twophasemodel}
\end{figure}

\item[(2).] {\bf Numerical flux}

The numerical flux for the multi-medium flow is composed of two parts: the cell
edge flux and the phase interface flux. Below we explain the flux contribution
towards any given cell $\mathscr K_{i,n}$. We introduce two sets of flow
variables at the $n$-th time level
\[
\bm U_{_{\mathscr K_{i,n}}}^\pm = \left[\begin{array}{c}
\rho_{_{\mathscr K_{i,n}}}^\pm \\ 
\rho_{_{\mathscr K_{i,n}}}^\pm\bm u_{_{\mathscr K_{i,n}}}^\pm \\
E_{_{\mathscr K_{i,n}}}^\pm 
\end{array}\right],
\]
which refer to the constant states in the cell $\mathscr K_{i,n}^\pm$. Note that
the flow variables vanish if there is no corresponding medium in a given cell. 

\begin{itemize}
\item {\bf Cell edge flux}

The cell edge flux is the exchange of the flux between the same medium across
the cell boundary. For any edge $\mathscr L_{ij}$ between the cell $\mathscr
K_{i,n}$ and one of its adjacent cells $\mathscr K_{j,n}$, let $\bm n_{ij,n}$ be
the unit normal pointing from $\mathscr K_{i,n}$ to $\mathscr K_{j,n}$. The
cell edge flux across $\mathscr L_{ij,n}^\pm$ is calculated as
\begin{equation}\label{eq:cell_bound_flux}
  \hat{\bm F}_{ij,n}^\pm = \Delta t_n
  \left|\mathscr L_{ij,n}^\pm \right| 
  \hat{\bm F} \left(
    \bm U_{_{\mathscr K_{i,n}}}^\pm, \bm U_{_{\mathscr K_{j,n}}}^\pm; 
    \bm n_{ij,n} \right),
\end{equation}
where $\Delta t_n$ denotes the current time step length, and $\hat{\bm F}(\bm
U_l, \bm U_r; \bm n)$ is a consistent monotonic numerical flux along $\bm n$.
Here we adopt the local Lax-Friedrich flux
\[
\hat{\bm F}(\bm U_l, \bm U_r; \bm n) = \dfrac{1}{2} \left(\bm F(\bm U_l) +
  \bm F(\bm U_r)\right) \cdot \bm n -
  \dfrac{1}{2}\lambda \left(\bm U_r - \bm U_l\right),
\]
where $\lambda$ is the maximal signal speed over $\bm U_l$ and $\bm U_r$.

\item {\bf Phase interface flux}

The phase interface flux is the exchange of the flux between two mediums due to
the interaction of mediums at the phase interface. If $\mathscr K_{i,n}$ is an
interface cell, then the flux across the interface $\Gamma_{_{\mathscr
K_{i,n}}}$ can be approximated by
\begin{equation}
\hat{\bm F}_{_{\mathscr K_{i,n}}}^{\pm}=
\Delta t_n\left|\Gamma_{_{\mathscr K_{i,n}}}\right|
\begin{bmatrix}
0 \\ q_{_{\mathscr K_{i,n}}}^*\bm n_{_{\mathscr K_{i,n}}} \\
q_{_{\mathscr K_{i,n}}}^*u_{_{\mathscr K_{i,n}}}^*
\end{bmatrix}.
\label{eq:phaseflux}
\end{equation}
Here $q_{_{\mathscr K_{i,n}}}^*$ and $u_{_{\mathscr K_{i,n}}}^*$ are the
interface stress and normal velocity, which are obtained by applying the
approximate solver we proposed in Section \ref{sec:aps} to a local
one-dimensional Riemann problem in the normal direction of the phase interface
with initial states
\begin{align*}
\left[\rho_l,u_l,p_l, S_l\right]^\top &=
\left[\rho_{_{\mathscr K_{i,n}}}^-,~
 \bm u_{_{\mathscr K_{i,n}}}^-\cdot \bm n_{_{\mathscr K_{i,n}}},~
  p_{_{\mathscr K_{i,n}}}^-,~
  \bm n_{_{\mathscr K_{i,n}}}^\top \cdot {\bf S}_{_{\mathscr K_{i,n}}}^-
  \cdot \bm n_{_{\mathscr K_{i,n}}} \right]^\top, \\
\left[\rho_r, u_r, p_r, S_r\right]^\top &=
\left[\rho_{_{\mathscr K_{i,n}}}^+,~
 \bm u_{_{\mathscr K_{i,n}}}^+\cdot \bm n_{_{\mathscr K_{i,n}}},~
  p_{_{\mathscr K_{i,n}}}^+,~
  \bm n_{_{\mathscr K_{i,n}}}^\top \cdot {\bf S}_{_{\mathscr K_{i,n}}}^+ 
  \cdot \bm n_{_{\mathscr K_{i,n}}} \right]^\top.
 \end{align*}
Here $p_{_{\mathscr K_{i,n}}}^\pm$ and ${\bf S}_{_{\mathscr
K_{i,n}}}^\pm$ in the initial states are given through the corresponding
equations of state and deviatoric constitutive laws, respectively.
\end{itemize}

\item[(3).] {\bf Update of conservative variables}

Once the edge flux \eqref{eq:cell_bound_flux} and phase interface flux
\eqref{eq:phaseflux} are computed, the conservative variables at the $(n+1)$-th 
time level are thus assigned as
\[
\bm U^{\pm}_{_{\mathscr K_{i,n+1}}} \!=\! \left\{\begin{array}{ll}
\bm 0, & \mathscr K^\pm_{i,n+1} \!=\! \varnothing, \\
\dfrac{1}{\left|\mathscr K^\pm_{i,n+1}\right|} \!
\left(|\mathscr K^\pm_{i,n}|
\bm U^\pm_{_{\mathscr K_{i,n}}} \!+\! 
\displaystyle\sum_{\mathscr L_{ij,n}^\pm\subseteq 
\partial \mathscr K_{i,n}^\pm}
\hat{\bm F}_{ij,n}^\pm \!+\!
\hat{\bm F}^\pm_{_{\mathscr K_{i,n}}}\right), 
& \mathscr K^\pm_{i,n+1} \!\neq\! \varnothing.
\end{array}\right.
\]
\end{itemize}

Basically, the steps we present above include the overall numerical scheme,
while there are more details in the practical implementation to guarantee the
stability of the scheme. Please see \cite{Guo2016} for those details.


\section{Numerical Examples}\label{sec:num}

In this section we present some numerical examples to validate our methods,
including one-dimensional Riemann problems and two-dimensional shock impact 
problems. One-dimensional simulations are carried out on uniform interval 
meshes, while two-dimensional simulations are carried out on unstructured 
triangular meshes. 

\subsection{One-dimensional Riemann problems}

In this part, we present some numerical examples of one-dimensional Riemann
problems. The computational domain is $[0,1]$ with $400$ cells, and both the
left and right boundaries are set as outflow conditions. The reference
solutions, if mentioned, are given from either published results or computed on
a very fine mesh with $10^4$ cells.

\subsubsection{Gas-gas Riemann problem}
In the first example, we study a single-phase problem from \cite{Toro2008},
where a standard Eulerian scheme also works well with no oscillation. We take it
as a two-phase problem by artifically embedding an interface at $x=0.5$
initially. The initial values are 
\[
[\rho, u, p]^\top = \left\{
\begin{array}{ll}
[1.0, ~0, ~10^3]^\top,     &x<0.5,\\ [2mm]
[1.0, ~0, ~10^{-2}]^\top, & x > 0.5.
\end{array}
\right.
\]

\begin{figure}[htbp]
\centering
\subfloat[Density]
{\includegraphics[width=0.3\textwidth]{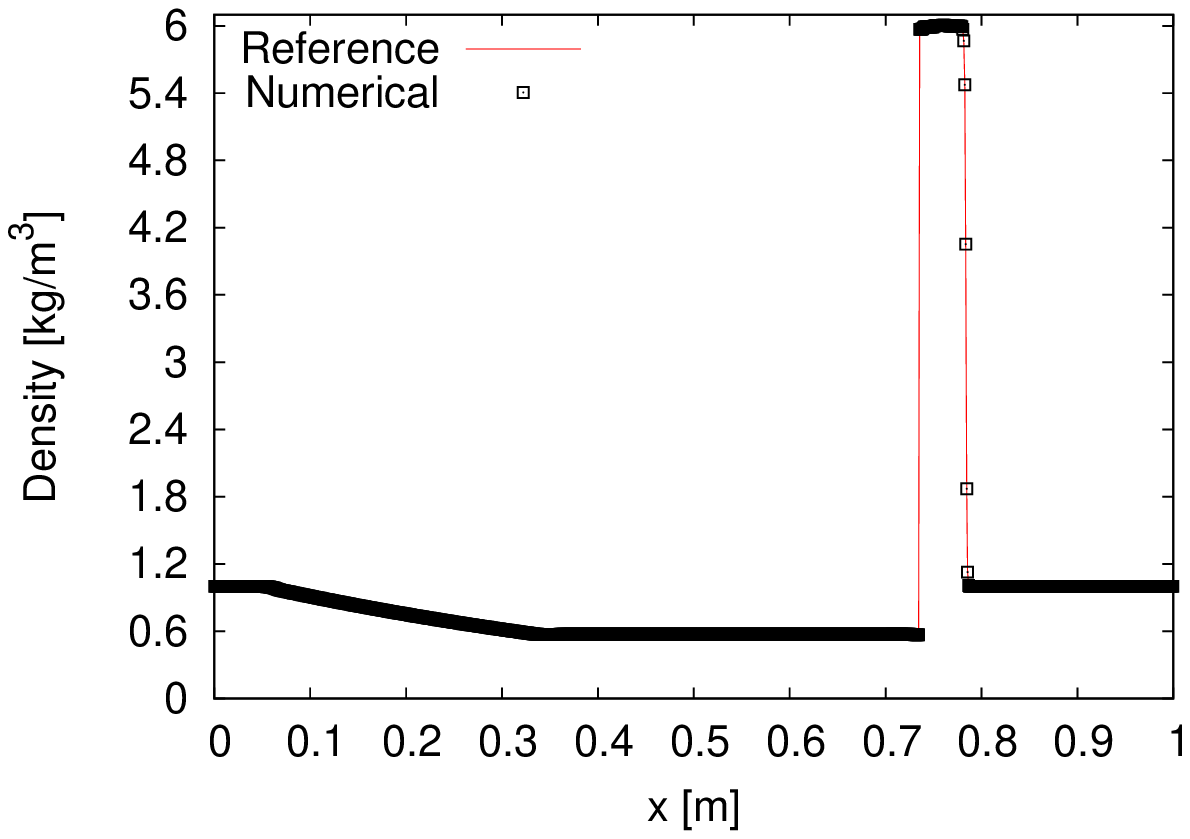}}
\subfloat[Pressure]
{\includegraphics[width=0.3\textwidth] {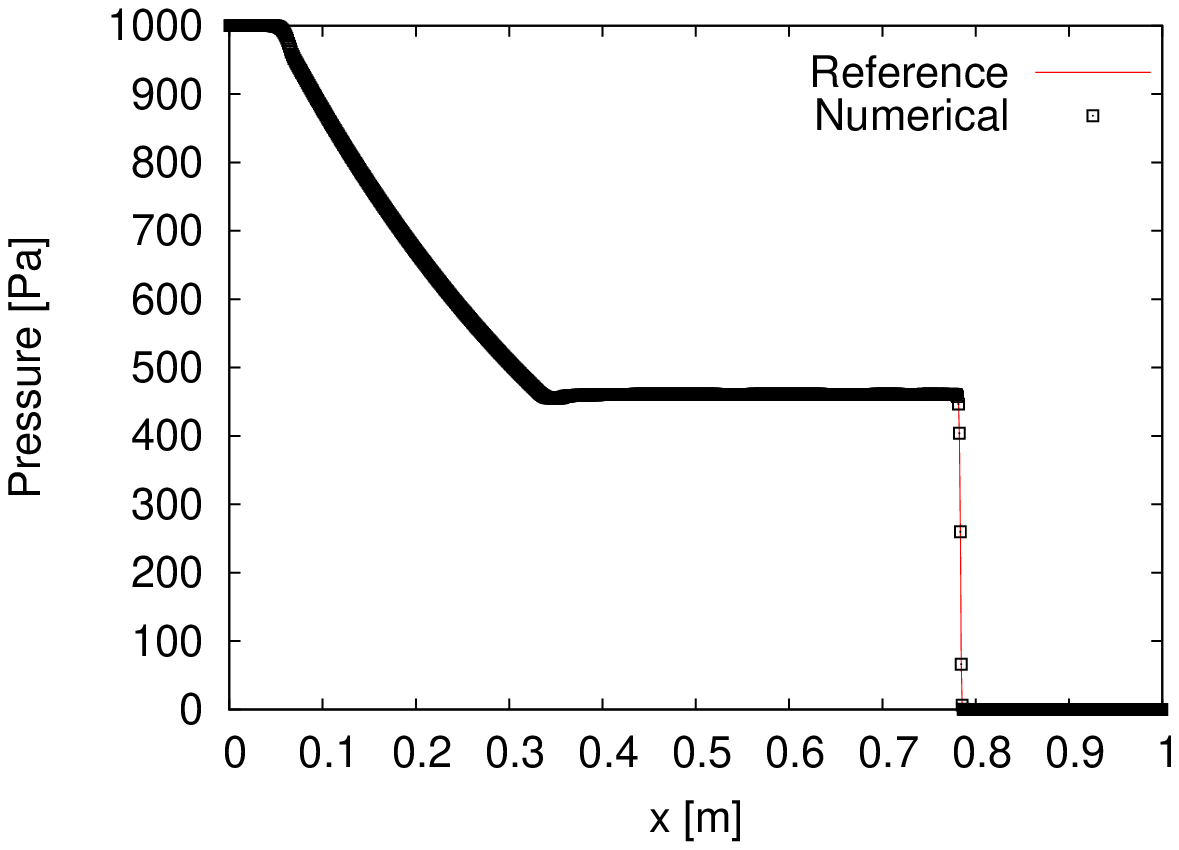}}
\subfloat[Velocity]
{\includegraphics[width=0.3\textwidth]{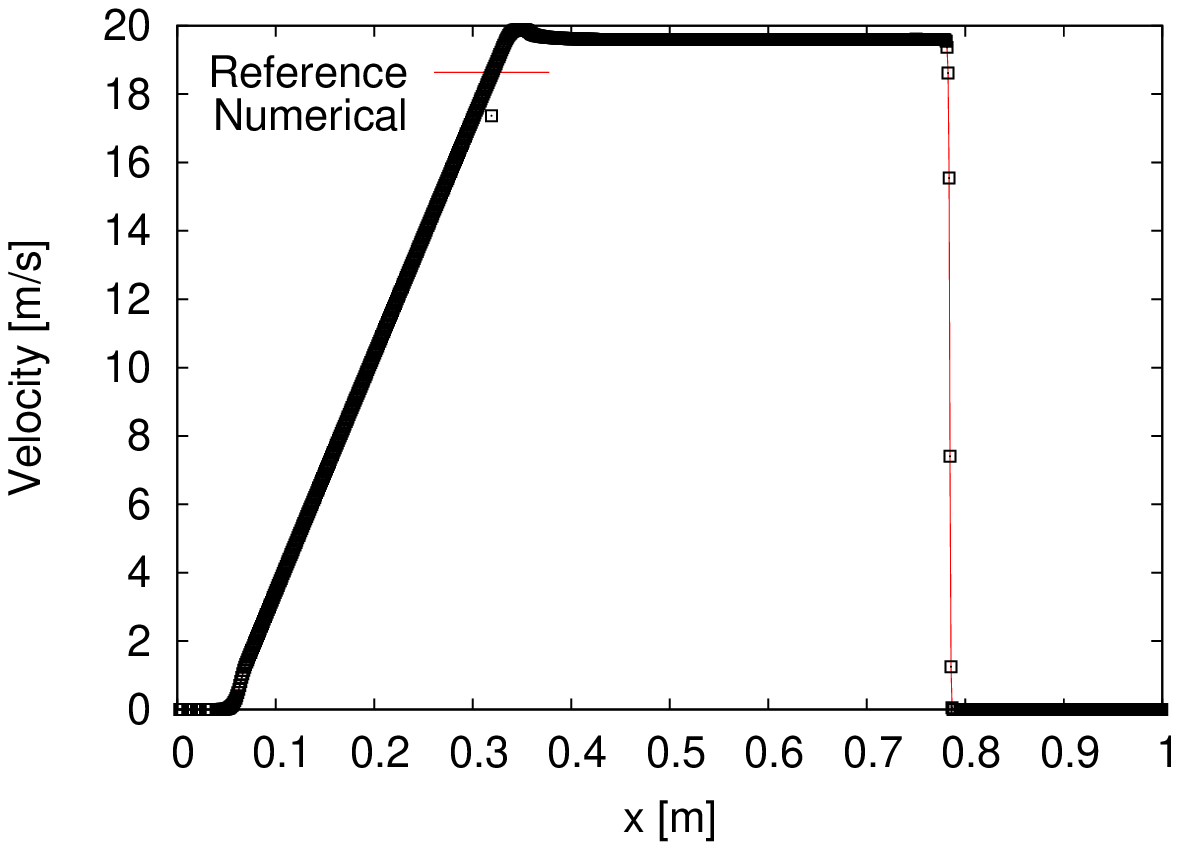}} 
\caption{Gas-gas Riemann problem.}
\label{res:gas-gas}
\end{figure}

We carry out the simulation to a final time of 0.012. Fig. \ref{res:gas-gas}
shows the comparison between numerical results and exact solutions. From the 
comparison we can see that the numerical results behave in perfect agreement 
with the exact solutions.

\subsubsection{JWL-polynomial Riemann problem}
\label{sec:jwl-water}
This example concerns the JWL-polynomial Riemann problem. The initial states
are 
\[
[\rho, u, p]^\top = \left\{
\begin{array}{ll}
[1630, ~0, ~8.3\times 10^9]^\top, &x<0.5,\\ [2mm]
[1000, ~0, ~1.0\times 10^5]^\top, & x > 0.5.
\end{array}
\right.
\]
We use the following values to describe the TNT \cite{Smith1999}:
$A_1=\unit[3.712\times10^{11}]{Pa}$,
$A_2=\unit[3.230\times10^9]{Pa}$,
$\omega=0.30$, 
$R_1=4.15$, 
$R_2=0.95$ and $\rho_0=\unit[1630]{kg/m^3}$. 
The parameters of the polynomial EOS are 
$\rho_0=\unit[1000]{kg/m^3}$,
$A_1=\unit[2.20\times10^9]{Pa}$, $A_2=\unit[9.54\times10^9]{Pa}$,
$A_3=\unit[1.45\times10^{10}]{Pa}$, $B_0=B_1=0.28$, 
$T_1=\unit[2.20\times10^9]{Pa}$ and $T_2=0$ \cite{Jha2014}. 

\begin{figure}[htbp]
\centering
\subfloat[Density]
{\includegraphics[width=0.3\textwidth]{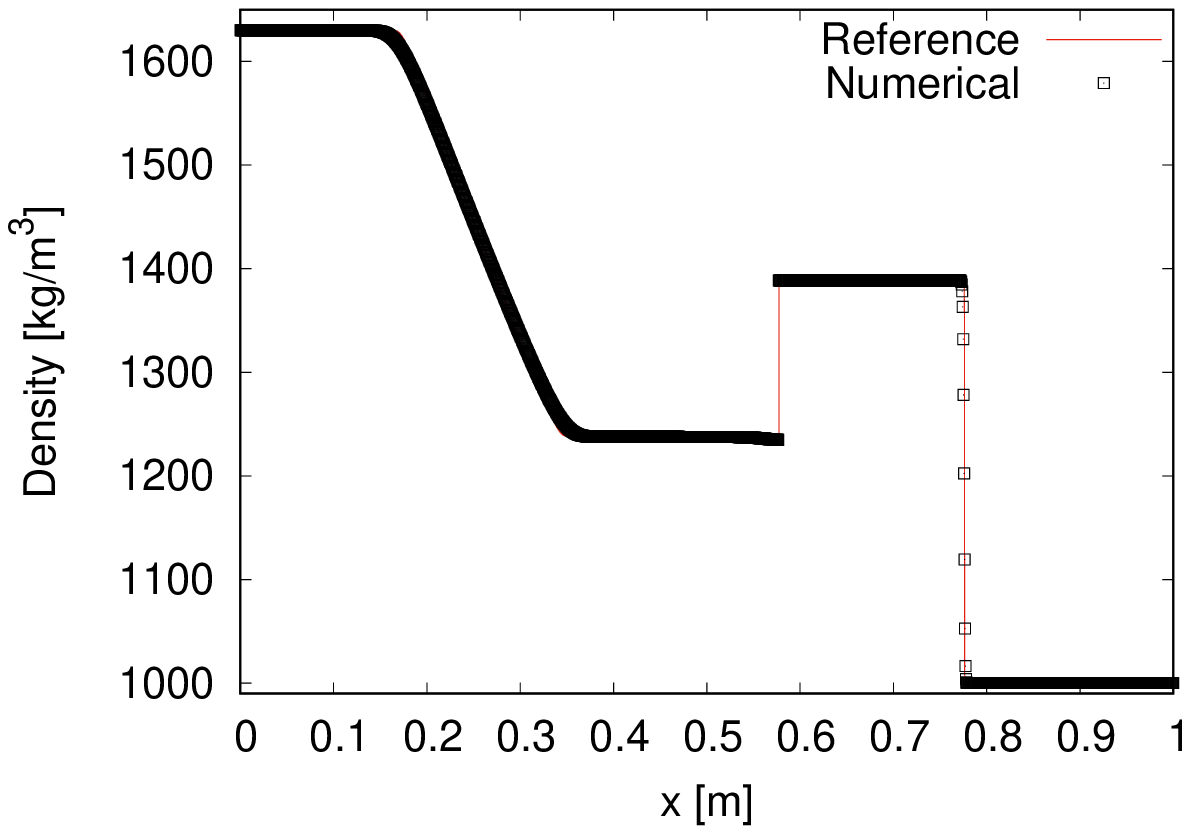}}
\subfloat[Pressure]
{\includegraphics[width=0.3\textwidth] {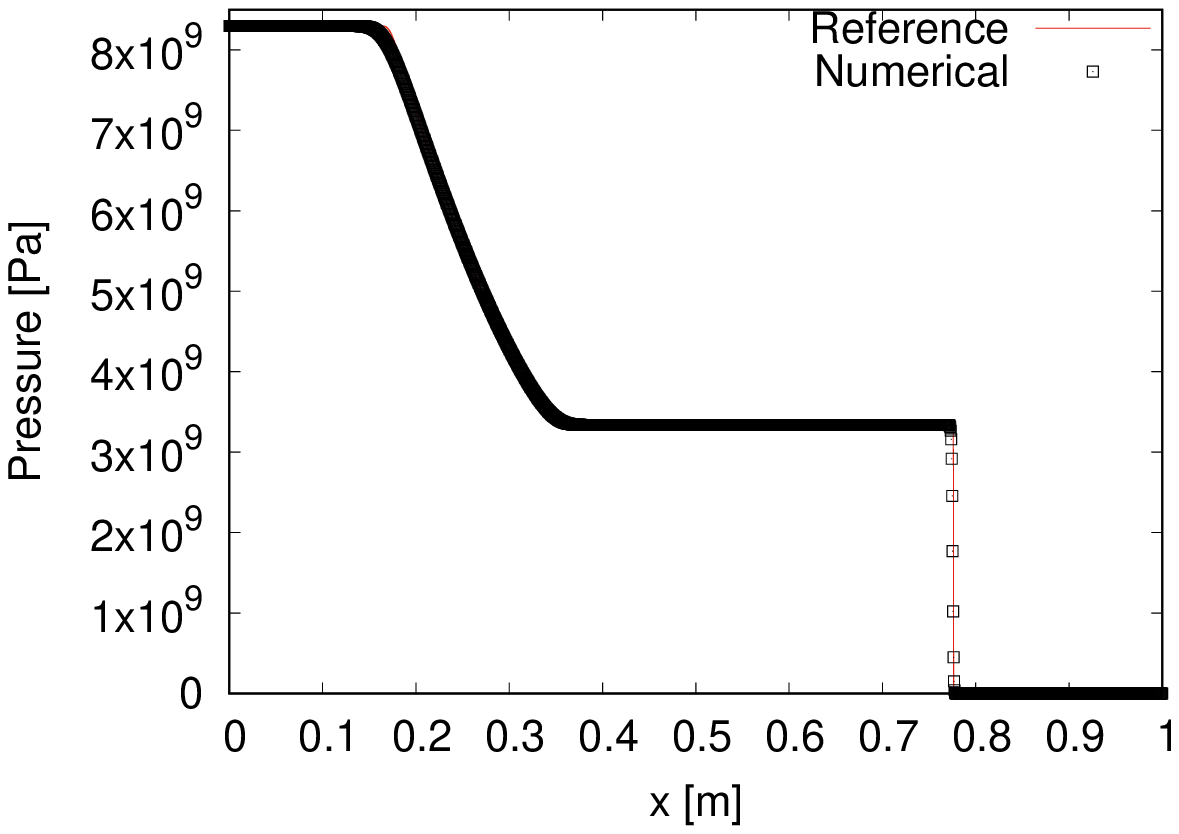}}
\subfloat[Velocity]
{\includegraphics[width=0.3\textwidth]{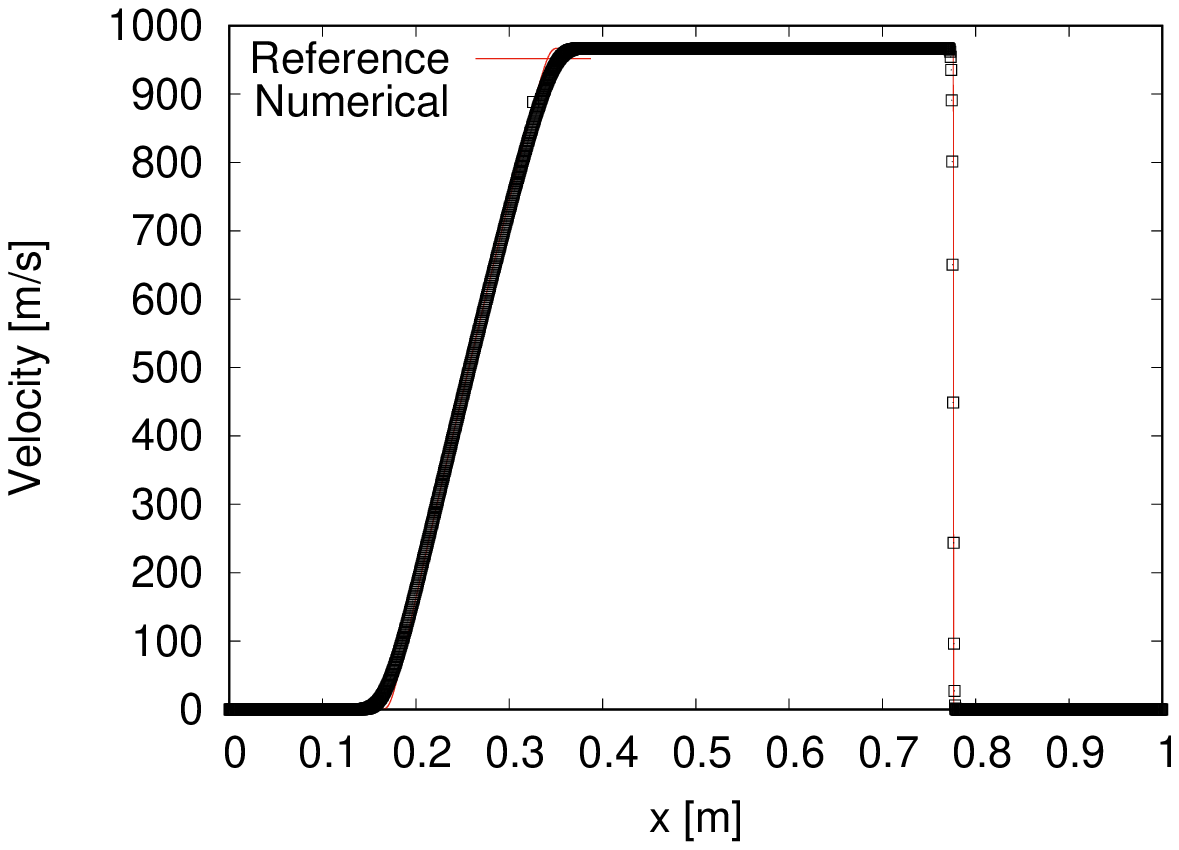}} 
\caption{JWL-polynomial Riemann problem.}
\label{res:jwl-ploy}
\end{figure}

The result at $t=8.0\times 10^{-5}$ is shown in Fig. \ref{res:jwl-ploy}, where
we can observe that both the interface and shock are captured well without
spurious oscillation.

\subsubsection{Gavrilyuk's elastic solid Riemann problem}
\label{sec:e-e}
In this problem, we simulate an elastic solid Riemann problem 
\cite{Gavrilyuk2008}. The hydrostatic pressure of the solid is described by the
stiffened gas EOS with parameters $\gamma=4.4$, $p_\infty=6\times 10^6$ Pa.
And the deviatoric component obeys the Hooke's law, whose elastic shear modulus 
is $\mu^{_\mathscr E}=10^{10}$ Pa. The initial values are given by
\[
 [\rho, u, p]^\top = \left\{
  \begin{array}{ll}
    [10^3, ~100, ~10^5]^\top, & x < 0.5,\\[2mm]
    [10^3, ~-100, ~10^5]^\top, & x > 0.5.
  \end{array}
 \right.
\]

The comparison between our numerical results and reference solutions at
$6.1\times 10^{-5}$ is shown in Fig. \ref{res:e-e}, from which we can see that 
our results agree well with the reference solutions, and there is no 
oscillation in the vicinity of phase interface and shock waves.

\begin{figure}[htbp]
\centering
\subfloat
{\includegraphics[width=0.3\textwidth]
  {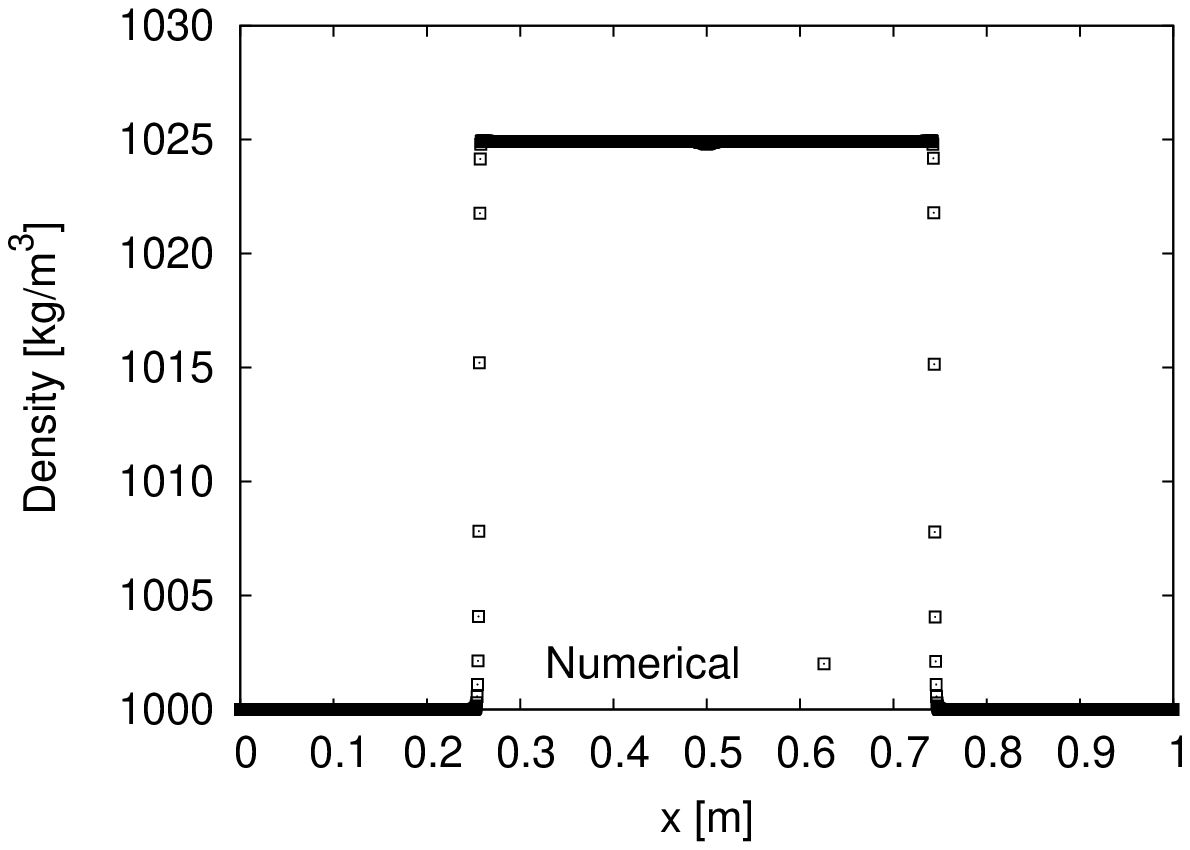}}
\subfloat
{\includegraphics[width=0.3\textwidth] 
  {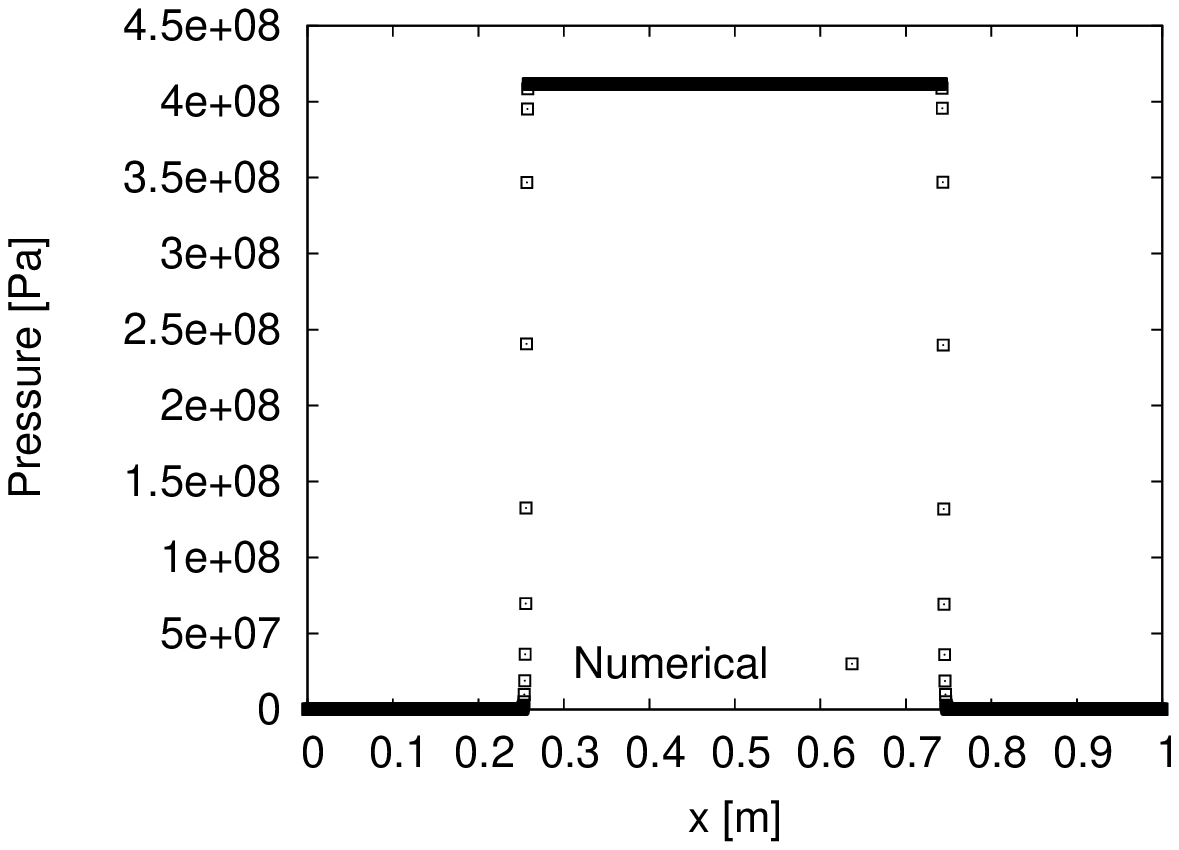}}
\subfloat
{\includegraphics[width=0.3\textwidth]
  {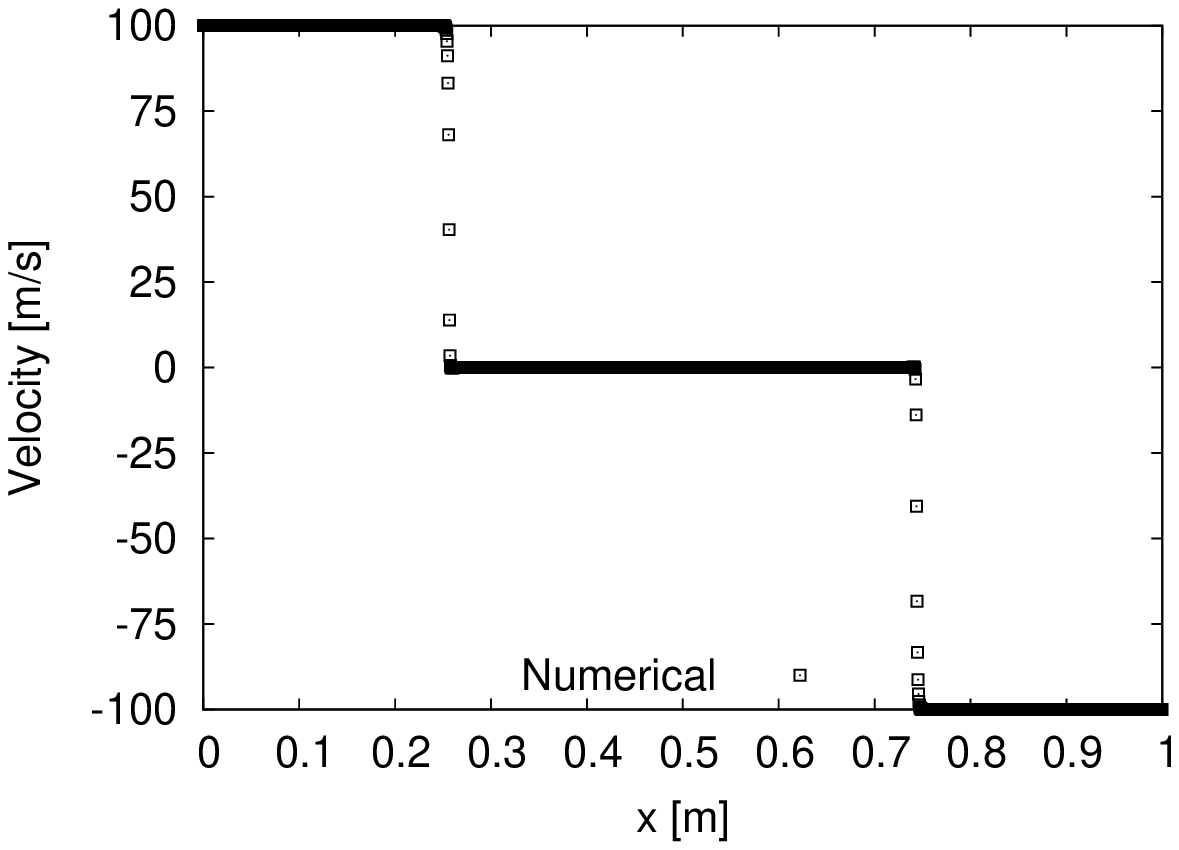}} \\
\addtocounter{subfigure}{-3}
\subfloat[Density]
{\includegraphics[width=0.3\textwidth]
  {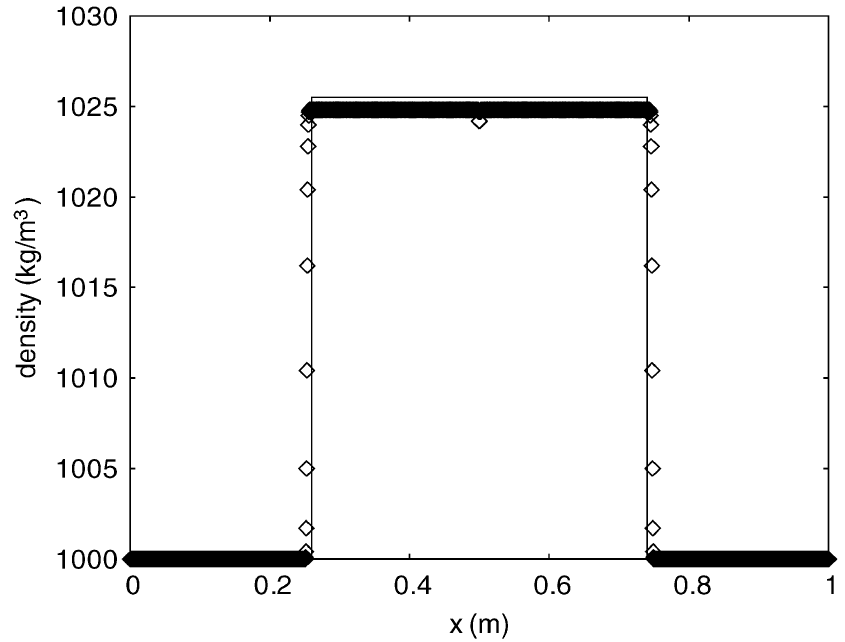}}
\quad
\subfloat[Pressure]
{\includegraphics[width=0.29\textwidth]
  {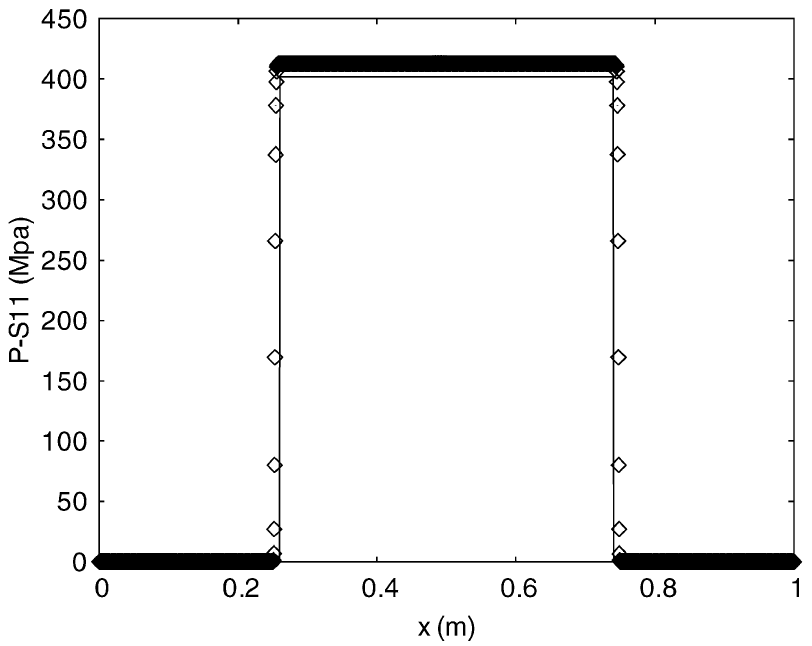}}
\quad
\subfloat[Velocity]
{\includegraphics[width=0.29\textwidth]
  {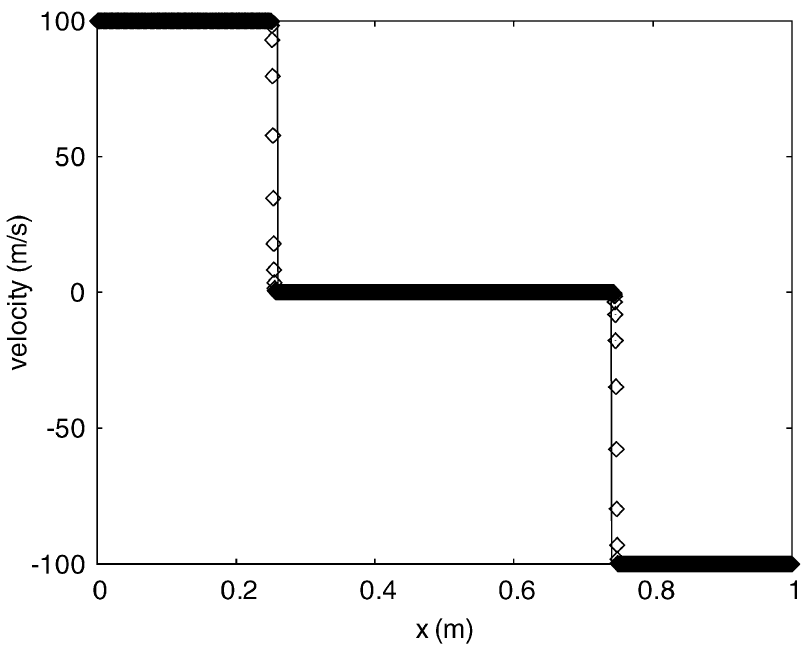}} \\
\caption{Gavrilyuk's elastic solid Riemann problem (top row: our results, bottom
row: results from Gavrilyuk \textit{et al.} \cite{Gavrilyuk2008}).} 
\label{res:e-e}
\end{figure}

\subsubsection{JWL-elastic solid Riemann problem}
\label{res:jwl-elastic}
In this problem, we simulate a JWL-elastic solid Riemann problem. The JWL
EOS has the following parameter: $A_1=\unit[8.545\times10^{11}]{Pa}$, 
$A_2=\unit[2.050\times10^{10}]{Pa}$, $\omega=0.25$, $R_1=4.6$, $R_2=1.35$, 
$\rho_0=\unit[1840]{kg/m^3}$. The elastic solid has the same constitutive law  
as Section \ref{sec:e-e} with $\beta^{_\mathscr E}=10^{14}~\mbox{Pa} 
\cdot\mbox{kg/m}^3$. The initial values are
\[
 [\rho, u, p]^\top = \left\{
  \begin{array}{ll}
    [1630, ~0, ~9.2\times 10^9]^\top, & x < 0.5, \\ [2mm]
    [7800, ~0, ~10^5]^\top, & x > 0.5.
  \end{array}
 \right.
\]

\begin{figure}[htbp]
\centering
\subfloat[Density]
{\includegraphics[width=0.3\textwidth]{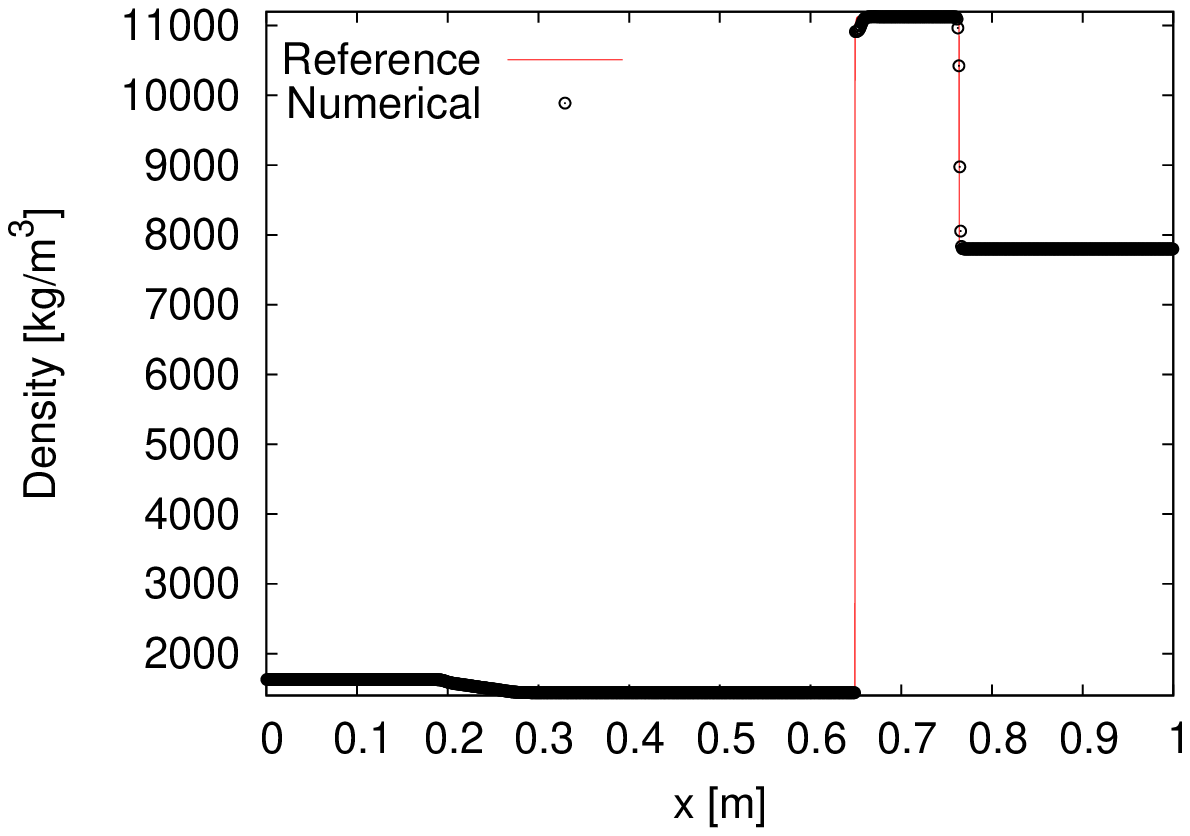}}
\subfloat[Pressure]
{\includegraphics[width=0.3\textwidth] {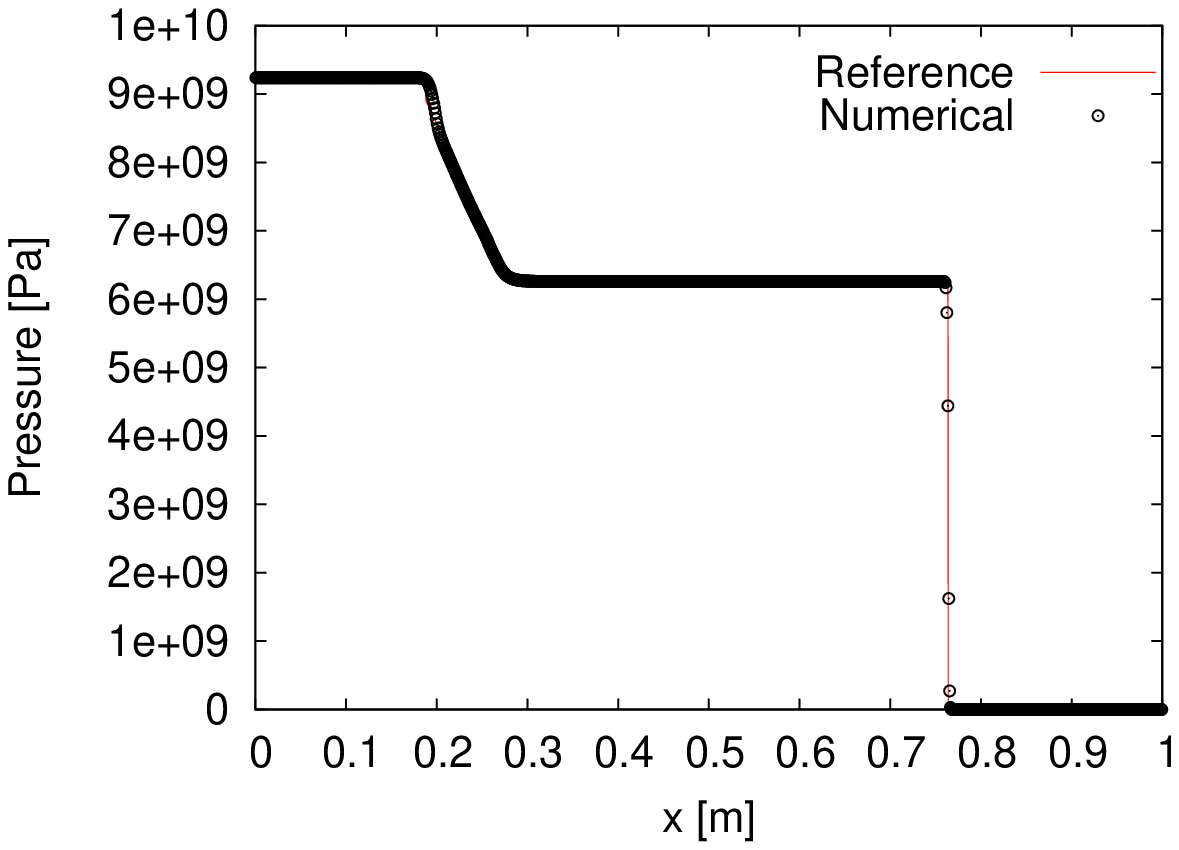}}
\subfloat[Velocity]
{\includegraphics[width=0.3\textwidth]{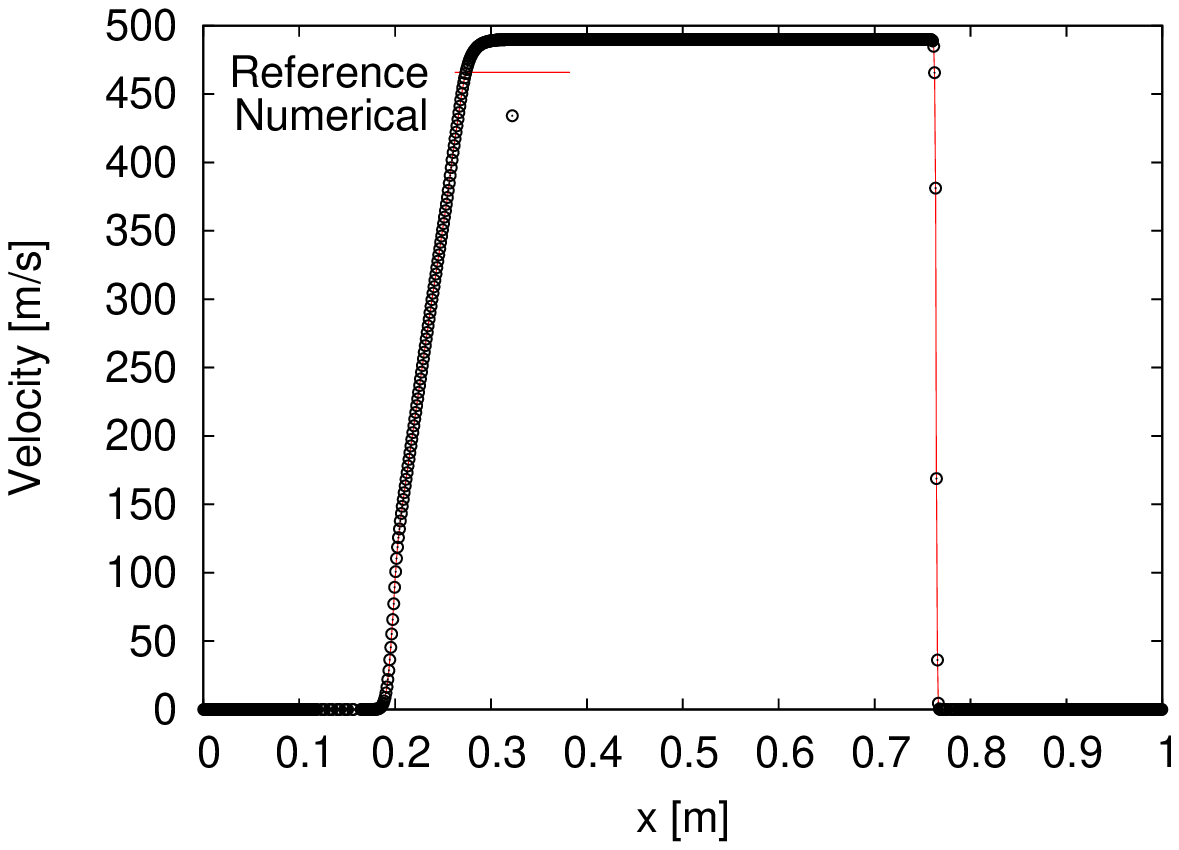}} 
\caption{JWL-elastic solid Riemann problem.}
\label{res:jwl-e}
\end{figure}

The computation terminates at $10^{-4}$. Fig. \ref{res:jwl-e} displays the
results of our numerical scheme and the exact solutions, where we can see that
there is no non-physical pressure and velocity across the contact discontinuity
in our numerical scheme.

\subsubsection{Perfectly elastoplastic solid Riemann problem}
\label{res:ep-ep}
In this problem, we extend our methods to simulate the perfectly elastoplastic 
solid-solid Riemann problem \cite{Liu2008}. We take the Murnagham EOS 
\eqref{eq:murnagham} to describe the hydrostatic pressure of the solid, and set
the following parameters for the solid: $\mu^{_\mathscr E}=\unit[8.53\times
10^5]{Pa},~ \mu^{_\mathscr P}=0,~ K=\unit[2.225\times 10^6]{Pa},~
Y^{_\mathscr E}=\unit[6.50\times 10^3]{Pa}$, $Y^{_\mathscr P}=0$. The initial 
values are given by
\[
 [\rho, u, p]^\top = \left\{
  \begin{array}{ll}
    [7.8, ~10, ~1.0]^\top, & x < 0.5,\\[2mm]
    [7.8, ~-5, ~1.0]^\top, & x > 0.5.
  \end{array}
 \right.
\]

The comparison between our numerical results and reference solutions at
$6.751\times 10^{-4}$ is shown in Fig. \ref{res:rm_ep_ep}. Each solid has two
nonlinear waves, the leading elastic shock wave and tailing plastic shock
wave, and there is no oscillation in the vicinity of phase interface and shock
waves. Both the elastic and plastic shock waves are captured correctly.
\begin{figure}[htbp]
\hspace{-2mm}
\centering
\subfloat
{\includegraphics[width=0.3\textwidth]{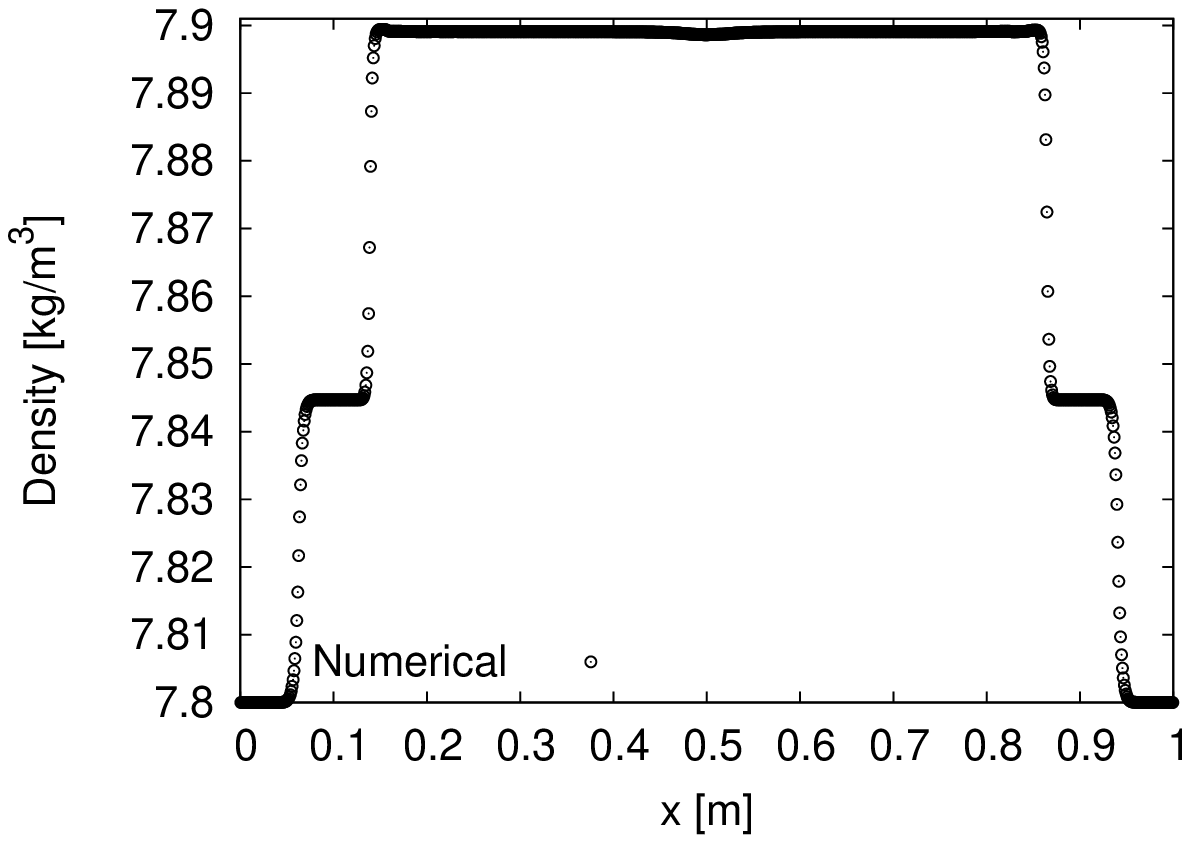}}
\subfloat
{\includegraphics[width=0.3\textwidth] {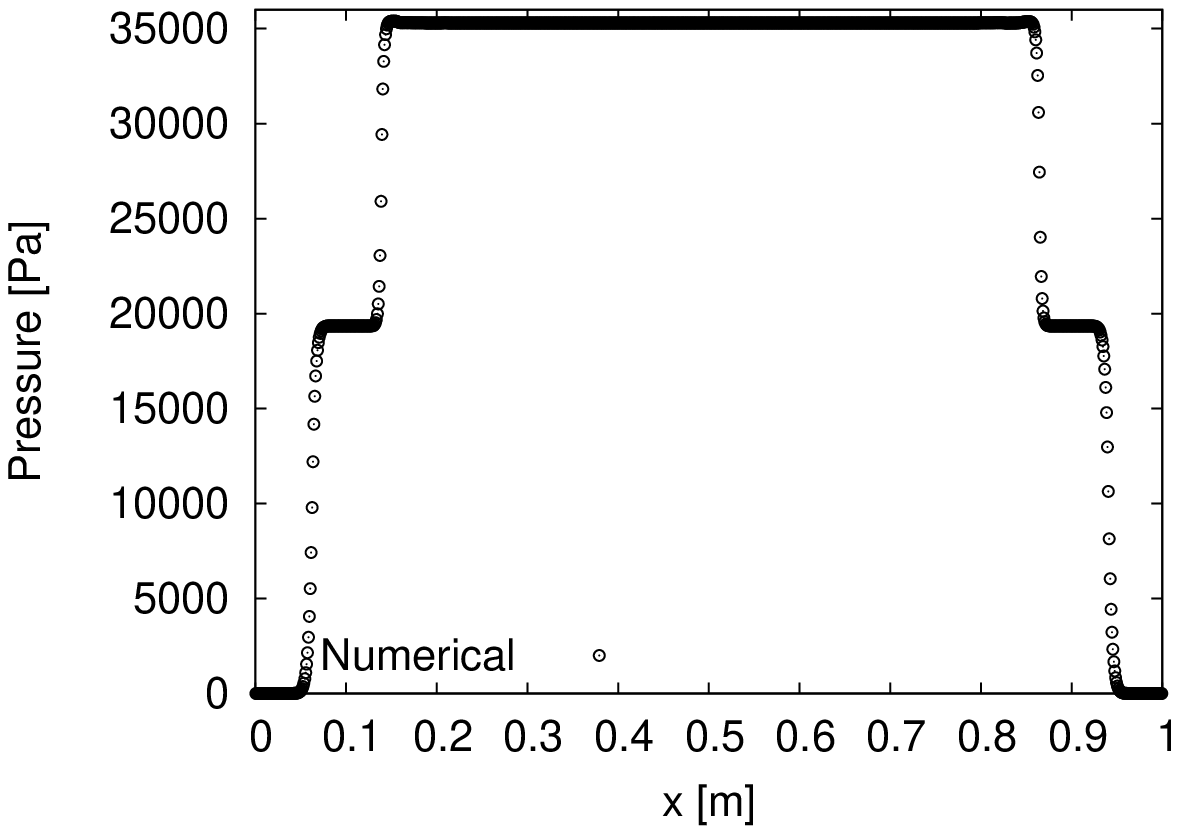}}
\subfloat
{\includegraphics[width=0.3\textwidth]{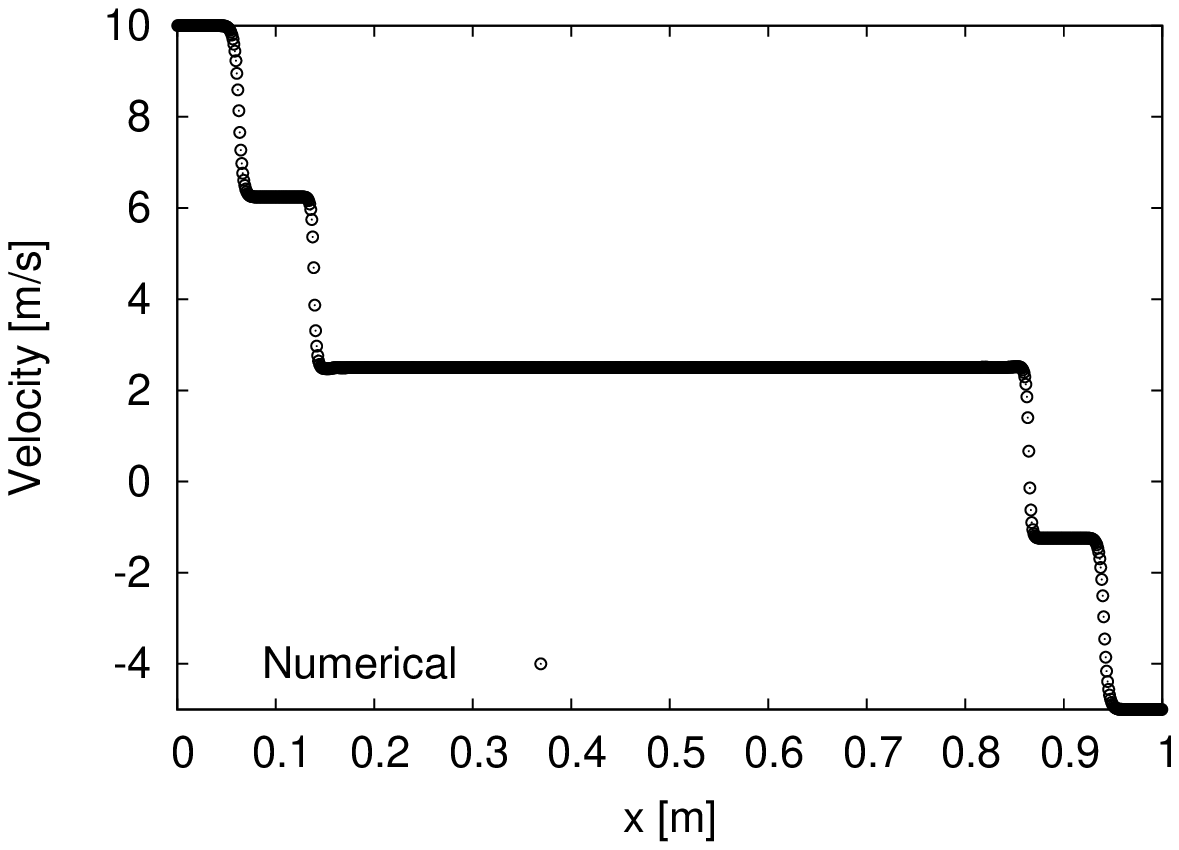}} \\
\addtocounter{subfigure}{-3}
\vspace{-1mm}
\subfloat[Density]
{\includegraphics[width=0.28\textwidth]{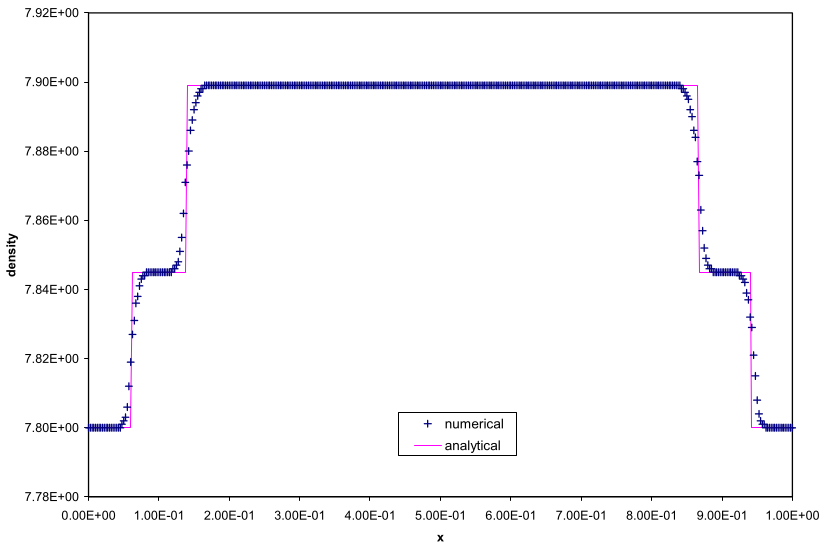}}
\quad
\subfloat[Pressure]
{\includegraphics[width=0.28\textwidth]{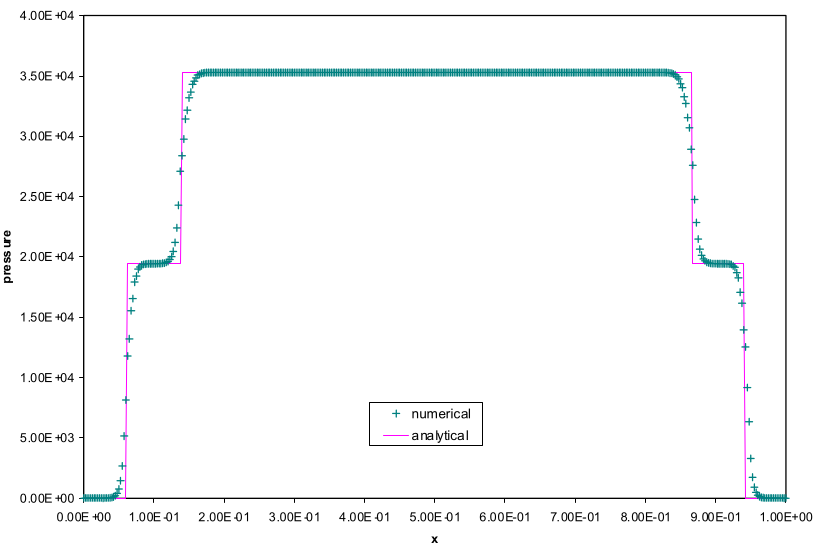}}
\quad
\subfloat[Velocity]
{\includegraphics[width=0.28\textwidth]{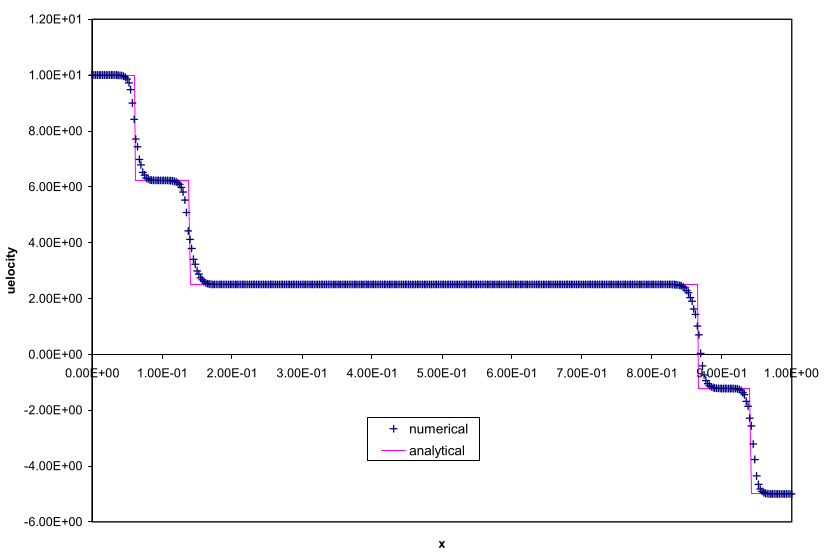}}
\caption{Perfectly elastoplastic solid Riemann problem (top panel: our numerical
results, bottom panel: reference solutions from T.G. Liu \cite{Liu2008}).}
\label{res:rm_ep_ep}  
\end{figure}

\subsubsection{Hydro-elastoplastic solid Riemann problem}
\label{res:fp-fp}
In this problem, we extend our methods to simulate the hydro-elastoplastic solid 
Riemann problem \cite{Liu2008}, which has the same initial conditions and 
parameters as Section \ref{res:ep-ep} except 
$Y^{_\mathscr P}=\unit[9.75\times 10^3]{Pa}$, 
$\mu^{_\mathscr P}=\mu^{_\mathscr E}/2$.

Our numerical results at $6.751\times 10^{-4}$ is shown in Fig. 
\ref{res:rm_fp_fp}. Due to the discrepency of the deviatoric constitutive law, 
each solid has three nonlinear waves, a leading elastic shock wave, an 
intermediate plastic shock wave and a tailing fluid shock wave. From the 
comparison, we can see that there is no oscillation in the vicinity of phase 
interface and shock waves.
\begin{figure}[htbp]
\subfloat[Density]
{\includegraphics[width=0.28\textwidth]{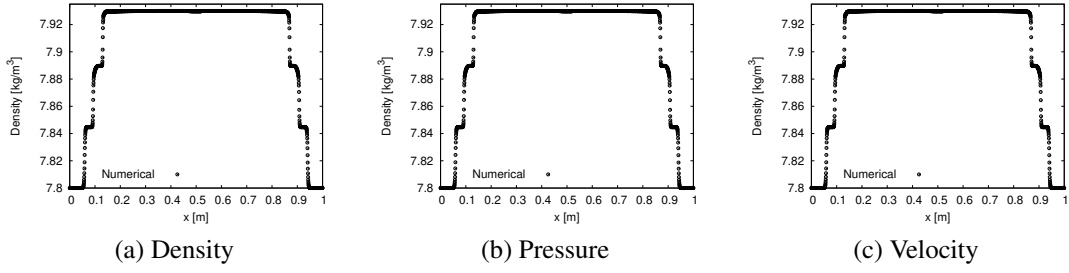}}
\quad
\subfloat[Pressure]
{\includegraphics[width=0.28\textwidth]{fp-fp/dens.eps}}
\quad
\subfloat[Velocity]
{\includegraphics[width=0.28\textwidth]{fp-fp/dens.eps}}
\caption{Hydro-elastoplastic solid Riemann problem.}
\label{res:rm_fp_fp}  
\end{figure}

\subsection{Two-dimensional applications}

In this part, we present a few two-dimensional problems in engineering
applications, which are carried out on triangular meshes, including gas-bubble 
interaction, blast wave reflection, implosion compression and high speed impact 
problems.

\subsubsection{Gas-bubble interaction problem}
In this problem, we simulate a gas-bubble interaction problem from Hass 
\cite{Haas1987interaction, ullah2013towards, quirk1996dynamics}, which has 
been widely used as a benchmark problem for validations of numerical schemes. 
The computational domain for our simulation is shown in Fig. 
\ref{fig:shock_bubble_model}. A cylindrical bubble with diameter 50mm is placed 
in the middle of the square shock tube filled with air. A planar weak shock of 
Mach number 1.22 vertical to the walls of the shock tube is produced on the 
right of the bubble, and it propagates towards and hits the bubble. The 
behaviors of the helium bubble and air are modeled by the ideal gas EOS, and 
the initial parameters are presented in Tab. \ref{tab:shock_bubble_intial}. The 
reflective wall boundary conditions are presented on the top and bottom, and 
outflow conditions are prescribed on the left and right ends of the domain. 
 
 \begin{figure}[htbp]
\centering
\includegraphics[width=0.6\textwidth]
{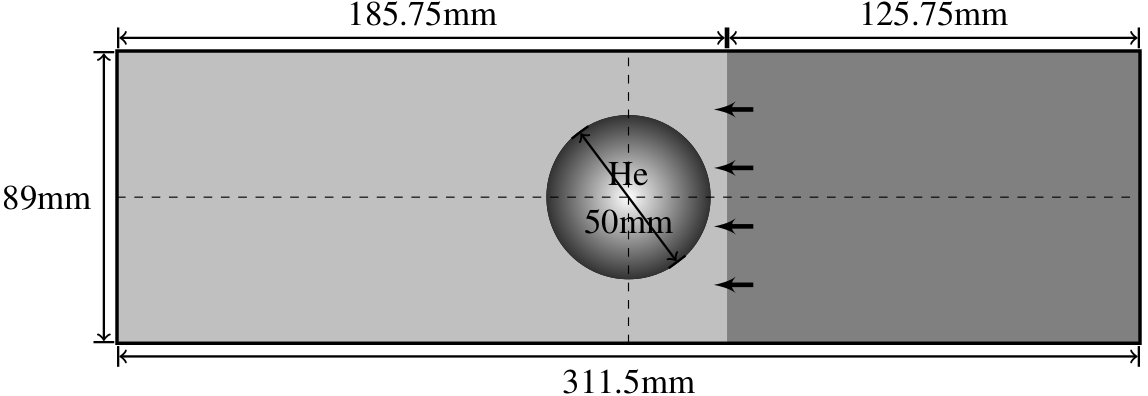}
\caption{Model of gas-bubble interaction problem.}
\label{fig:shock_bubble_model}
\end{figure}

\begin{table}[htbp]
\caption{Initial parameters for gas-bubble problem.}
\label{tab:shock_bubble_intial}
\centering
\begin{tabular}{ccccc}
\toprule 
Parameters & $\rho(\mbox{kg/m}^3)$ & $u(\mbox{m/s})$ &$p(\mbox{Pa})$ & $\gamma$\\
\midrule
Helium(bubble) &  0.2228 & 0 & 101325 &  1.648  \\
\midrule
Air(Before Shock) &  1.2250 & 0 & 101325 &  1.400  \\
\midrule
Air(After Shock) &  1.6861 & -113.534 & 159059 &  1.400  \\
\bottomrule
\end{tabular}
\end{table}

We present the contour images of the numerical density at the times 23 $\mu$s, 
43 $\mu$s, 53 $\mu$s, 66 $\mu$s, 75 $\mu$s, 102 $\mu$s, 260 $\mu$s, 445 $\mu$s, 
674 $\mu$s and 983 $\mu$s, and compare them with the experimental shadowgraphs 
picked from \cite{Haas1987interaction} at times 32 $\mu$s, 53 $\mu$s, 62 
$\mu$s, 72 $\mu$s, 82 $\mu$s, 102 $\mu$s, 245 $\mu$s, 427 $\mu$s, 674 $\mu$s 
and 983 $\mu$s. As is seen from the comparison, our numerical results are 
qualitatively in good agreement with the experiment. Our numerical simulation 
provides clear images for the severely deformed bubble, especially from 427 
$\mu$s to 983 $\mu$s, which show the ability of our methods in dealing with 
the large deformation of phase interface.

\begin{figure}[htbp]
\centering
\subfloat
{\includegraphics[width=0.24\textwidth]
{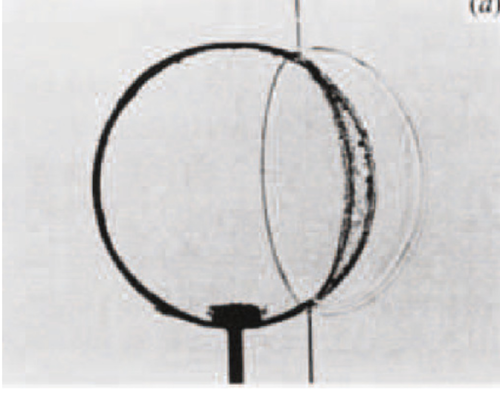}}\quad
\subfloat
{\includegraphics[width=0.7\textwidth]
{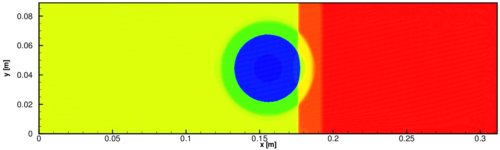}}\\
\vspace{5mm}
\subfloat
{\includegraphics[width=0.24\textwidth]
{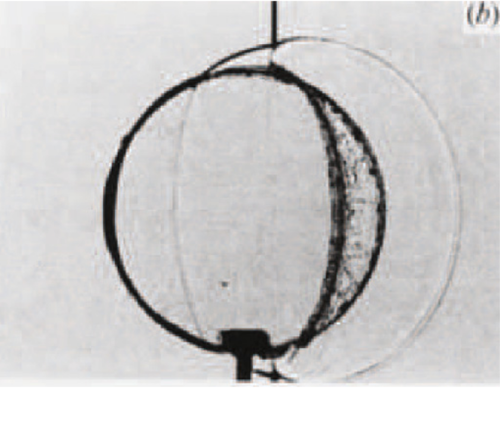}}\quad
\subfloat
{\includegraphics[width=0.7\textwidth]
{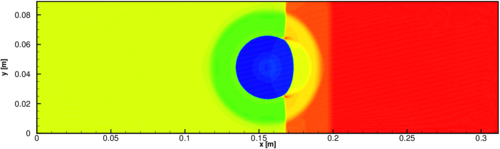}}\\
\vspace{5mm}
\subfloat
{\includegraphics[width=0.24\textwidth]
{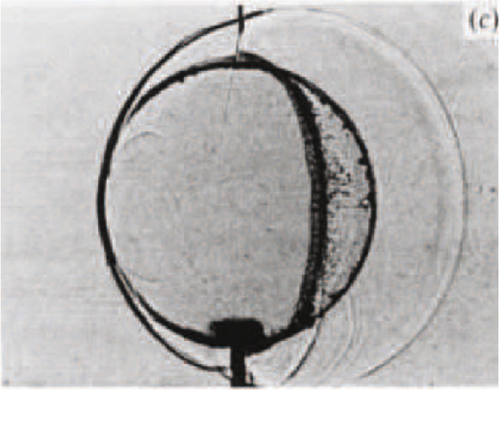}}\quad
\subfloat
{\includegraphics[width=0.7\textwidth]
{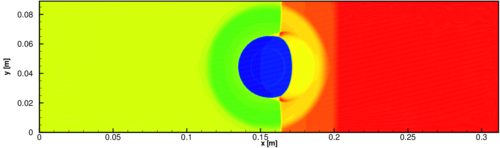}} \\
\vspace{5mm}
\subfloat
{\includegraphics[width=0.24\textwidth]
{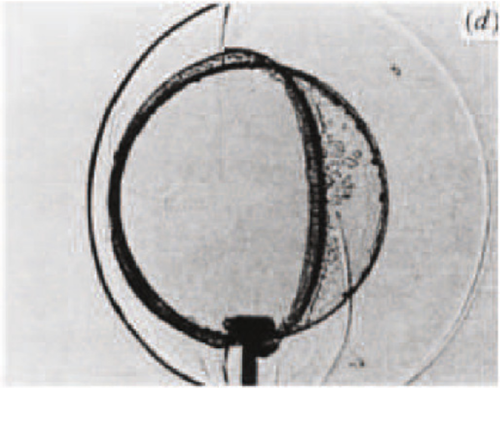}}\quad
\subfloat
{\includegraphics[width=0.7\textwidth]
{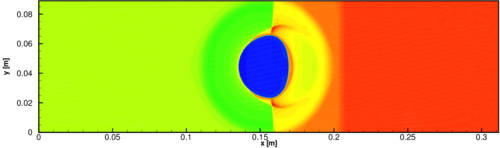}} \\
\vspace{5mm}
\subfloat
{\includegraphics[width=0.24\textwidth]
{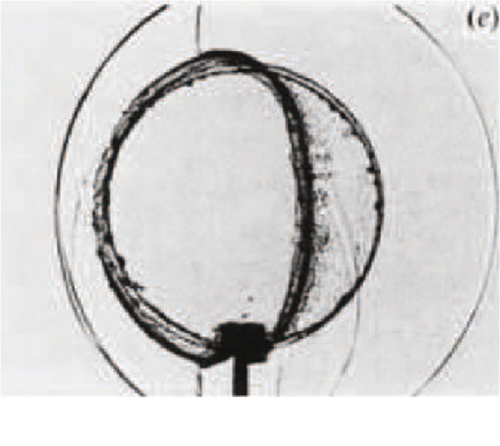}}\quad
\subfloat
{\includegraphics[width=0.7\textwidth]
{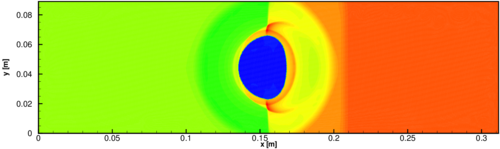}} 
\caption{Gas-bubble interaction problem. 
The first row: 32$\mu\mbox{s}$,~53$\mu\mbox{s}$,
62$\mu\mbox{s}$,~72$\mu\mbox{s}$, 82$\mu\mbox{s}$;~
The second row: 23$\mu\mbox{s}$,~43$\mu\mbox{s}$,
53$\mu\mbox{s}$,~66$\mu\mbox{s}$, 75$\mu\mbox{s}$.}
\label{res:shock-bubble1}
\end{figure}

\begin{figure}[htbp]
\centering
\subfloat
{\includegraphics[width=0.24\textwidth]
{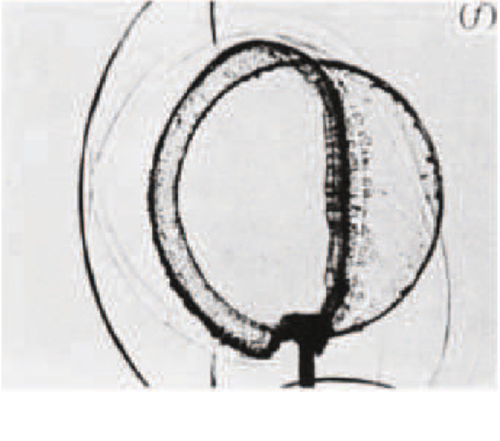}}\quad
\subfloat
{\includegraphics[width=0.7\textwidth]
{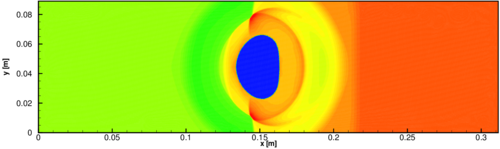}}\\
\vspace{5mm}
\subfloat
{\includegraphics[width=0.24\textwidth]
{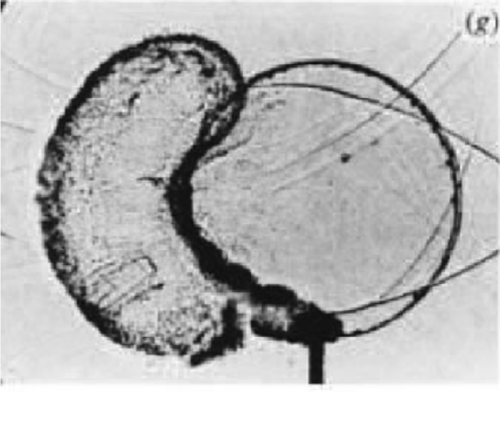}}\quad
\subfloat
{\includegraphics[width=0.7\textwidth]
{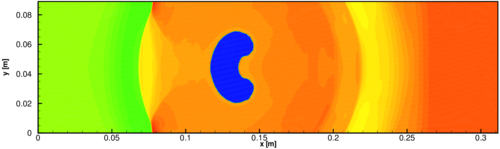}}\\
\vspace{5mm}
\subfloat
{\includegraphics[width=0.24\textwidth]
{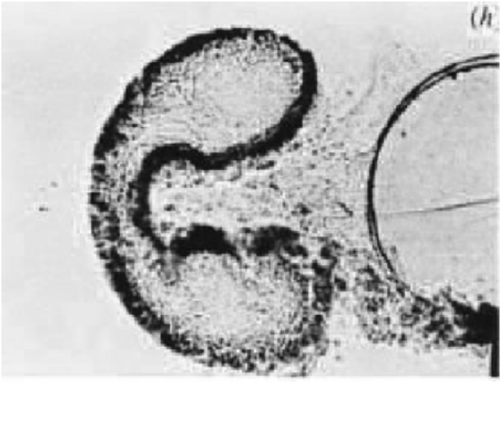}}\quad
\subfloat
{\includegraphics[width=0.7\textwidth]
{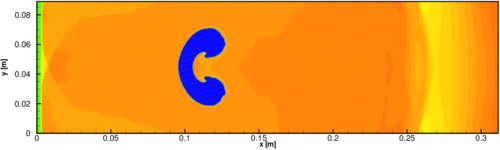}}\\
\vspace{5mm}
\subfloat
{\includegraphics[width=0.24\textwidth]
{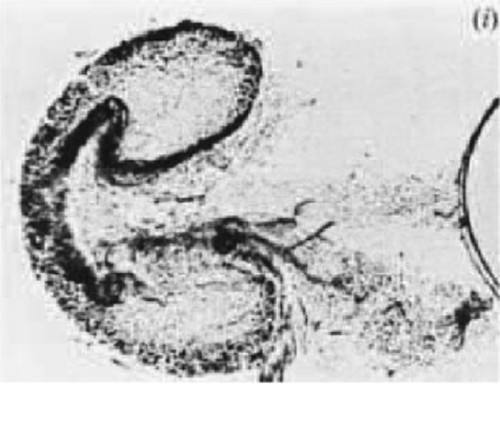}}\quad
\subfloat
{\includegraphics[width=0.7\textwidth]
{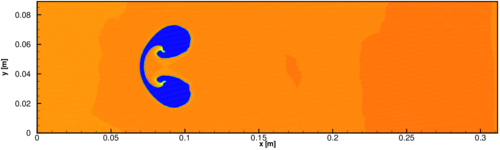}}\\
\vspace{5mm}
\subfloat
{\includegraphics[width=0.24\textwidth]
{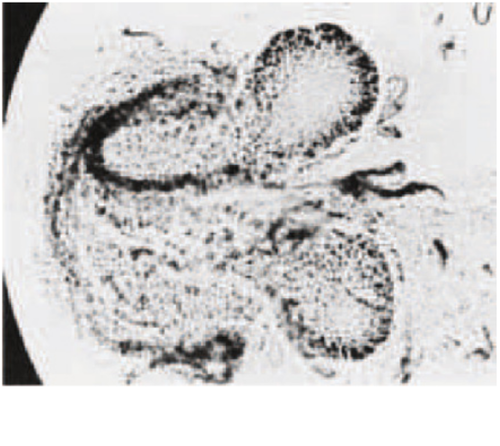}}\quad
\subfloat
{\includegraphics[width=0.7\textwidth]
{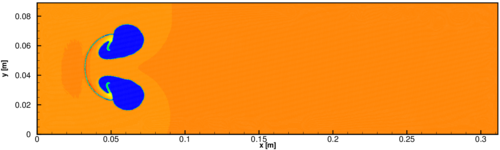}}
\caption{Gas-bubble interaction problem. 
The first row: 102$\mu\mbox{s}$,~245$\mu\mbox{s}$,
427$\mu\mbox{s}$,~674$\mu\mbox{s}$, 983$\mu\mbox{s}$;~
The second row: 102$\mu\mbox{s}$,~260$\mu\mbox{s}$,
445$\mu\mbox{s}$,~674$\mu\mbox{s}$, 983$\mu\mbox{s}$.}
\label{res:shock-bubble2}
\end{figure}

\subsubsection{Blast wave reflection of TNT explosion}
In this problem, we simulate a TNT explosion problem, where the blast wave is  
reflected by a rigid surface near the explosion center. We use this example to 
assess the isotropic behavior of TNT explosion in a computational domain $0\le 
r\le \unit[1]{m},~\unit[2]{m}\le z\le \unit[8]{m}$.  The air is modeled by the 
ideal gas EOS with adiabatic exponent $\gamma=1.4$, and the TNT is modeled by 
the JWL EOS with the same parameters as Section \ref{sec:jwl-water}. The initial 
conditions are: $\rho=\unit[1630]{kg/m^3}$, $u=\unit[0]{m/s}$, 
$p=\unit[9.5\times 10^9]{Pa}$ for the TNT, and $\rho=\unit[1.29]{kg/m^3}$, 
$u=\unit[0]{m/s}$, $p=\unit[1.013\times 10^5]{Pa}$ for the air. The initial 
interface is a sphere of radius $\unit[0.0527]{m}$ centered at the height 
$z=\unit[5]{m}$. All of the physical boundaries are set as rigid walls.

The results of shock produced by the high explosives are shown in Fig.  
\ref{res:tnt_ref_cont}. From here we can see that the shock wave propagates as 
an expansive spherical surface in the earlier period. When the spherical shock 
wave impinges on the rigid surface, it will be reflected firstly and propagate 
along the rigid wall simultaneously. When the incident angle exceeds the 
limit, the reflective wave switches from regular to irregular, and a Mach blast 
wave occurs. The shock parameters, shown in Fig. \ref{res:tnt_ref}, agree well 
with the experimental data in \cite{Baker1973, Huffington1985, Hokanson1978} 
and \cite{Zhangdz2009}.

\begin{figure}[htbp]
\centering
\subfloat[$t=1.0 \mu$s]
{\includegraphics[width=0.18\textwidth]
{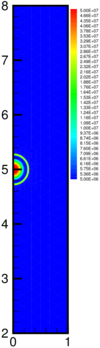}}
\subfloat[$t=4.3 \mu$s]
{\includegraphics[width=0.181\textwidth]
{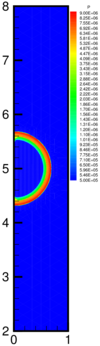}}
\subfloat[$t=3.0 \mu$s]
{\includegraphics[width=0.176\textwidth]
{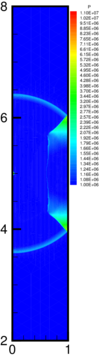}} 
\subfloat[$t=7.0 \mu$s]
{\includegraphics[width=0.181\textwidth]
{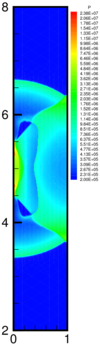}}
\subfloat[$t=11.0 \mu$s]
{\includegraphics[width=0.183\textwidth]
{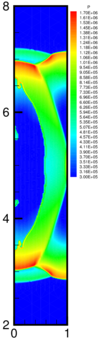}} \\
\caption{Pressure contours for TNT explosion problem.}
\label{res:tnt_ref_cont}
\vspace{15mm}
\subfloat[Peak overpressure]
{\includegraphics[width=0.48\textwidth]
{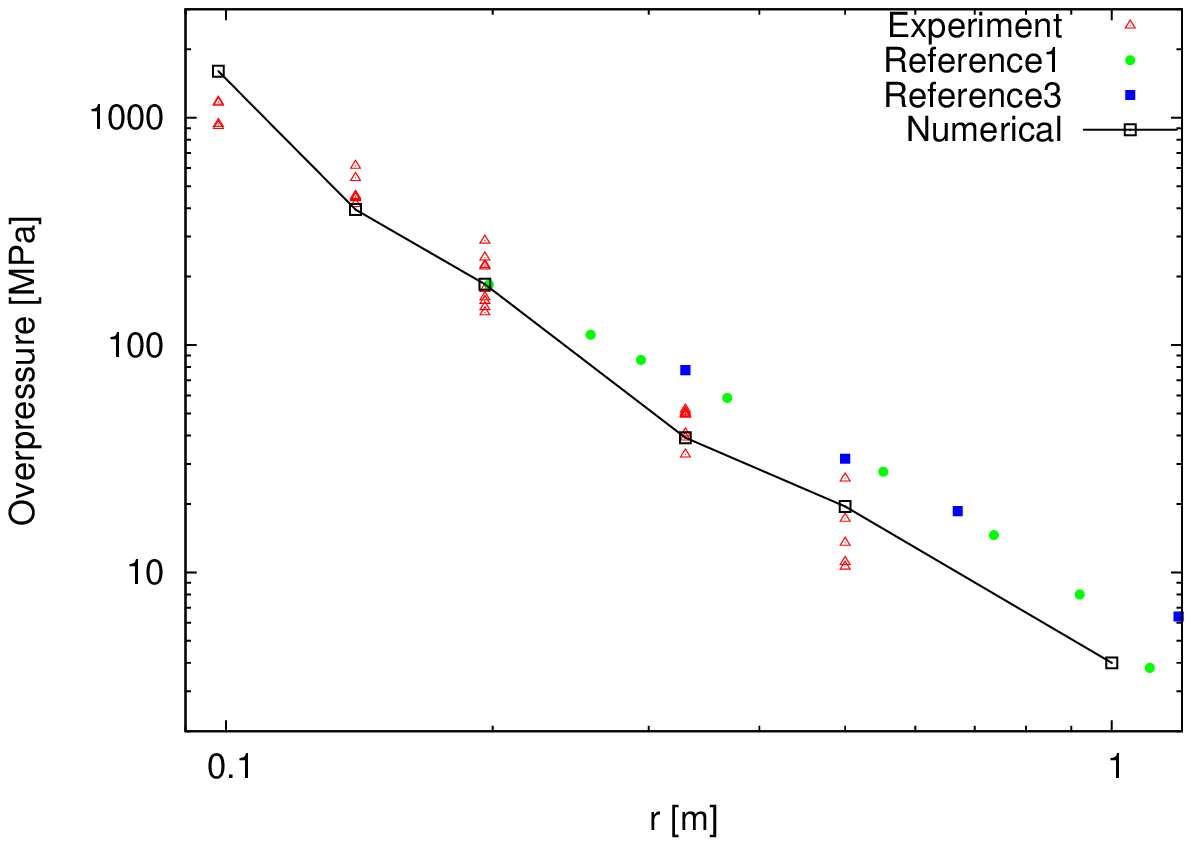}}
\subfloat[Impulse]
{\includegraphics[width=0.48\textwidth]
{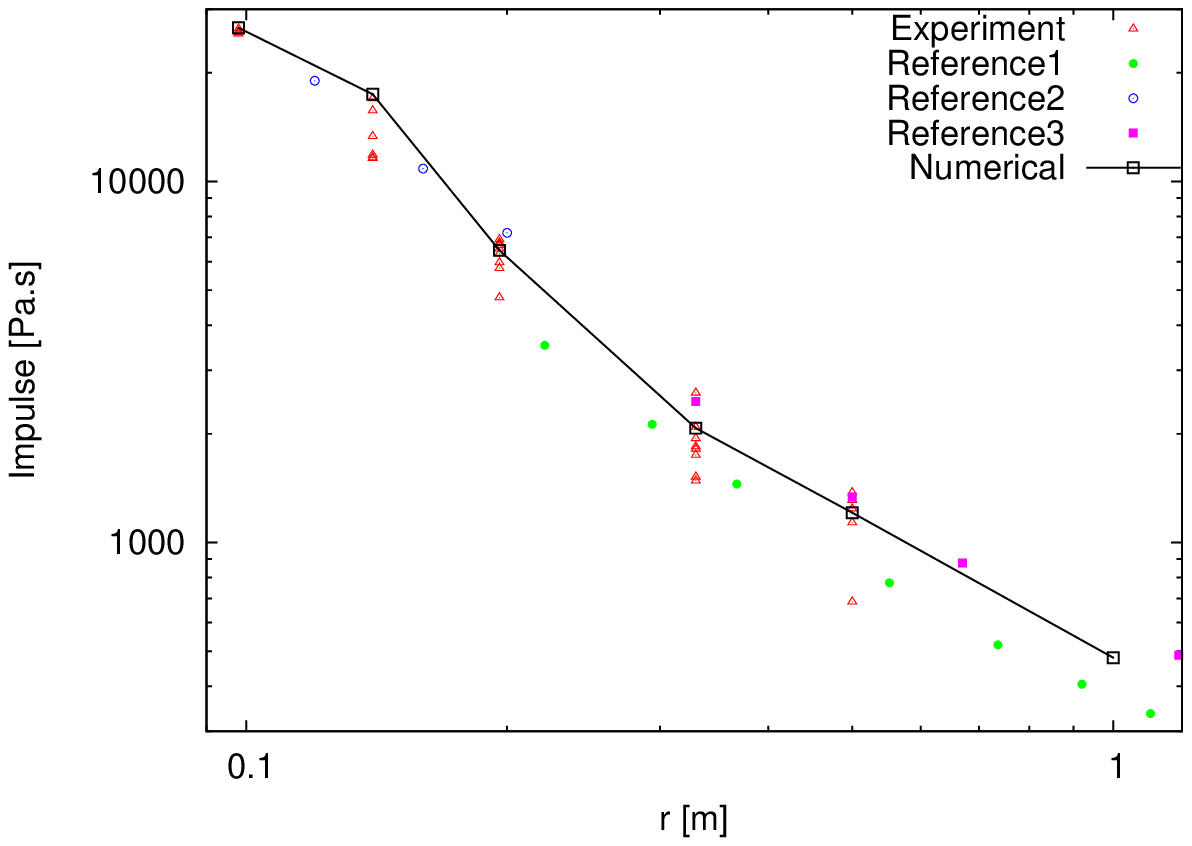}}
\caption{Shock wave parameters for TNT explosion problem.
The reference solution 1 is taken from \cite{Baker1973},
the reference solution 2 is taken from \cite{Huffington1985},
the reference solution 3 is taken from \cite{Hokanson1978}, and 
the experiment solution is taken from \cite{Zhangdz2009}. }
\label{res:tnt_ref}
\end{figure}

\subsubsection{Implosion compression problem}
We consider a two dimensional problem of implosion compression, which is
applied widely in inertial confinement fusion applications 
\cite{lindl1998inertial, jia2014}. The initial shape of the model is a sphere 
containing three distinct mediums, TNT, tungsten and air. The outmost layer 
is the high explosive products of TNT , which is described by the JWL EOS with 
parameters $A_1=\unit[8.524\times10^{11}]{Pa}$, 
$A_2=\unit[1.802\times10^{10}]{Pa}$, $\omega=0.38$, $R_1=4.6$, $R_2=1.3$, 
$\rho_0=\unit[1842]{kg/m^3}$. The intermediate layer is the tungsten, which is 
described by the stiffened gas EOS with parameters $\gamma=4.075$, 
$\rho_0=7.85~\mbox{g/cm}^3$. The innermost layer is the air, which is described 
by the ideal gas with $\gamma=1.4$. All the boundaries are set as outflow 
conditions. The initial values are 

\[
 [\rho, u, p]^\top = \left\{
  \begin{array}{ll}
    [1.29, ~0, ~10^5]^\top, & r < 0.1, \\ [2mm]
    [1.9237\times 10^4, ~0, ~10^5]^\top, & 0.1 \le r \le 0.105, \\ [2mm]
    [1.63\times 10^3, ~0, ~10^5]^\top, & 0.105 < r < 0.12. \\ [2mm]
  \end{array}
 \right.
\]

Fig. \ref{res:icf1} and \ref{res:icf2} show the pressure contours of the whole 
computational domain at different time. Due to the high pressure of the 
explosives at the outmost layer, it produces a strong shock wave inward and 
drives the tungsten and air moving into the center. The shock wave reaches a 
smallest radius at $37.5 ~\mu\mbox{s}$, whose pressure will increase to 
about $1.29 \times 10^{12}~\mbox{Pa}$. Then the shock wave will expand and 
propagate outward with a decreasing shock front. The symmetry of shock waves
and interfaces are kept well during the whole computation, which shows good 
efficiency of our schemes dealing with the highly nonlinear equations of state.

\begin{figure}[htbp]
\centering
\subfloat[$10\times 10^{-6}~\mbox{s}$]
{\includegraphics[width=0.45\textwidth]
{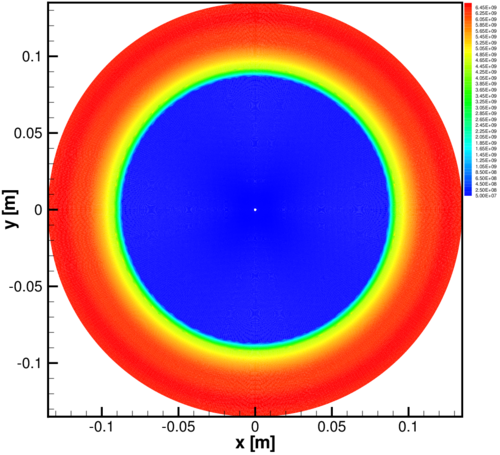}} 
\subfloat[$20\times 10^{-6}~\mbox{s}$]
{\includegraphics[width=0.45\textwidth]
{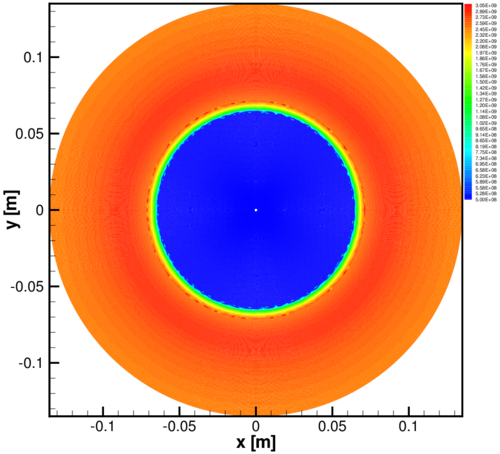}} \\
\vspace{5mm}
\subfloat[$30\times 10^{-6}~\mbox{s}$]
{\includegraphics[width=0.45\textwidth]
{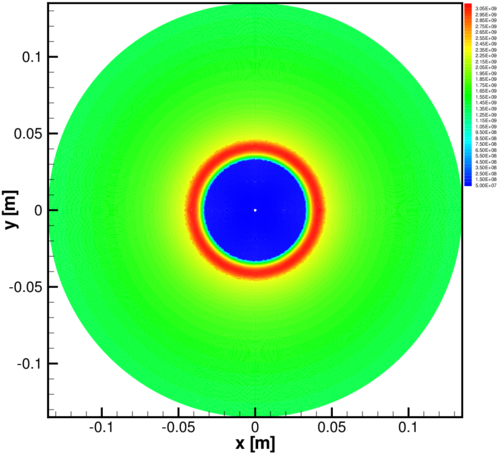}}
\subfloat[$32\times 10^{-6}~\mbox{s}$]
{\includegraphics[width=0.45\textwidth]
{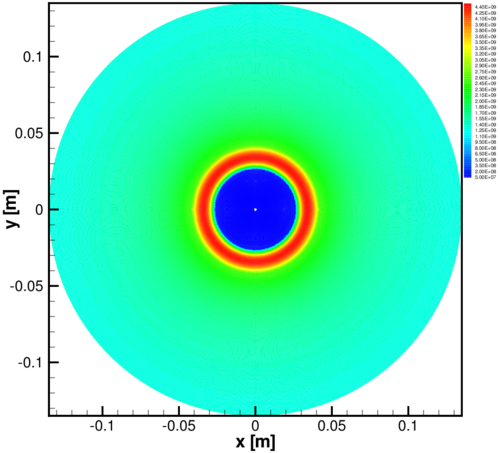}} \\
\vspace{5mm}
\subfloat[$34\times 10^{-6}~\mbox{s}$]
{\includegraphics[width=0.45\textwidth]
{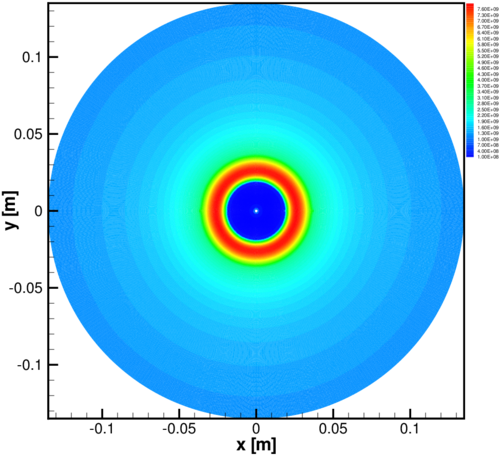}} 
\subfloat[$36\times 10^{-6}~\mbox{s}$]
{\includegraphics[width=0.45\textwidth]
{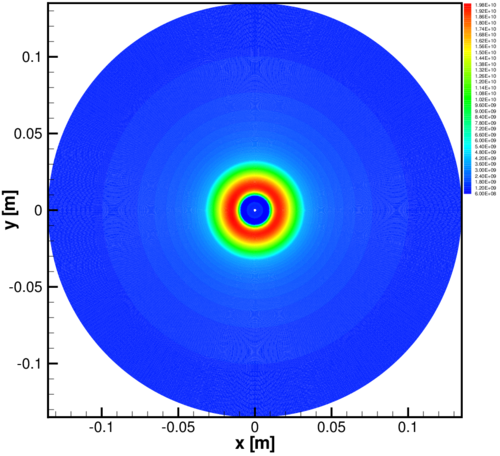}}
\caption{Pressure contours for implosion compression problems. }
\label{res:icf1}
\end{figure}

\begin{figure}[htbp]
\centering
\subfloat[$37\times 10^{-6}~\mbox{s}$]
{\includegraphics[width=0.45\textwidth]
{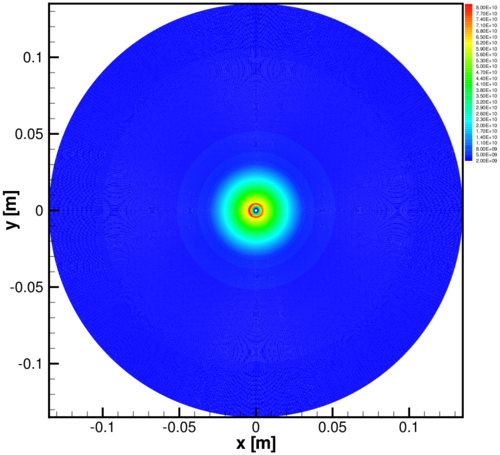}} 
\subfloat[$37.2\times 10^{-6}~\mbox{s}$]
{\includegraphics[width=0.45\textwidth]
{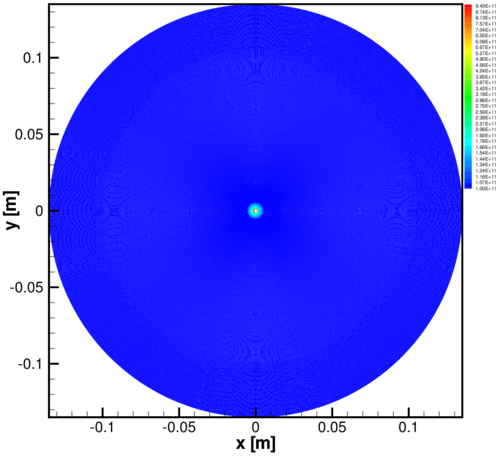}} \\
\vspace{5mm}
\subfloat[$37.5\times 10^{-6}~\mbox{s}$]
{\includegraphics[width=0.45\textwidth]
{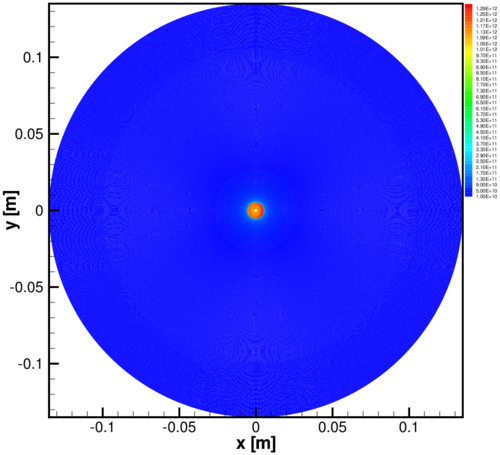}}
\subfloat[$39\times 10^{-6}~\mbox{s}$]
{\includegraphics[width=0.45\textwidth]
{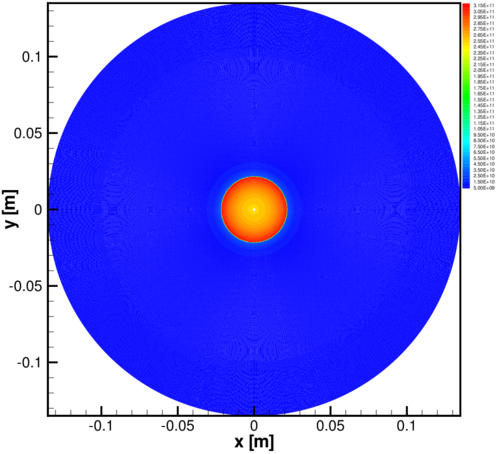}} \\
\vspace{5mm}
\subfloat[$40\times 10^{-6}~\mbox{s}$]
{\includegraphics[width=0.45\textwidth]
{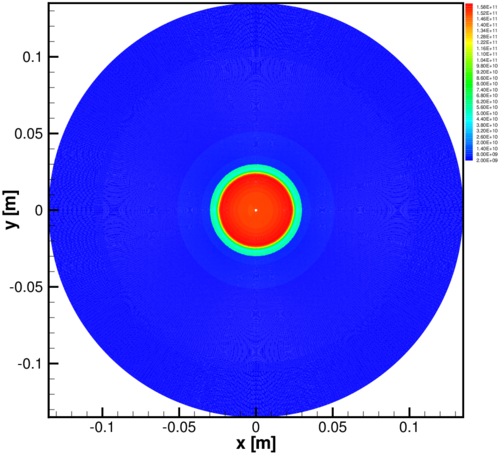}} 
\subfloat[$50\times 10^{-6}~\mbox{s}$]
{\includegraphics[width=0.45\textwidth]
{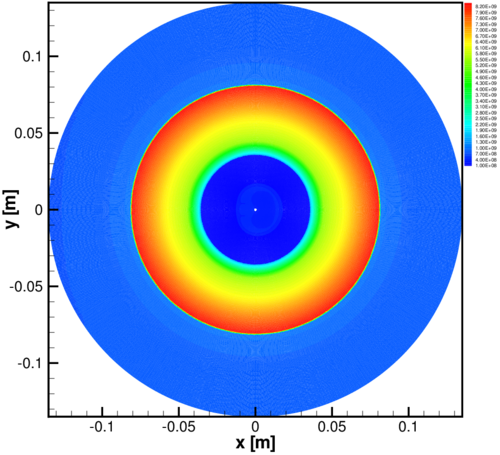}}
\caption{Pressure contours for implosion compression problems. }
\label{res:icf2}
\end{figure}

\subsubsection{High speed impact applications}
In this problem, we simulate a two dimensional high speed impact problem
between three elastoplastic solids. A cylindrical rod made of steel with an
initial radius of $\unit[0.2]{m}$ is given a velocity of $\unit[7000]{m/s}$
and impacts against two layers of static aluminum, shown as Fig. 
\ref{res:impact} (a). Each aluminum has an initial radius of $\unit[0.5]{m}$ 
and a length of $\unit[0.2]{m}$. In fact, the whole problem involves three 
mediums, the steel, the aluminum and the air. The equations of state for the 
hydrostatic pressure component of the steel and aluminum are both taken as the 
stiffened gas EOS, and the deviatoric component are both taken as the perfect 
elastoplasticity. The initial parameters are 
$\rho_0=\unit[7840]{kg/m^3}$, $\gamma=4.075$,
$\mu^{_\mathscr E}=\unit[78.5\times10^9]{Pa}$, $\mu^{_\mathscr P}=0$, 
$Y^{_\mathscr E}=\unit[160\times10^6]{Pa}$ for the steel, 
and $\rho_0=\unit[2790]{kg/m^3}$, $\gamma=2.75$, 
$\mu^{_\mathscr E}=\unit[27.4\times10^9]{Pa}$, $\mu^{_\mathscr P}=0$, 
$Y^{_\mathscr E}=\unit[34\times10^6]{Pa}$ for the aluminum. The air is modeled
by the ideal gas EOS with the following initial parameters:
$\rho_0=\unit[1.29]{kg/m^3}$,
$\gamma=1.4$ and $p_0=\unit[1.013\times10^5]{Pa}$.

\begin{figure}[htbp]
\centering
\hspace{2mm}
\subfloat[Illustration of the impact model]
{\includegraphics[width=0.41\textwidth]
{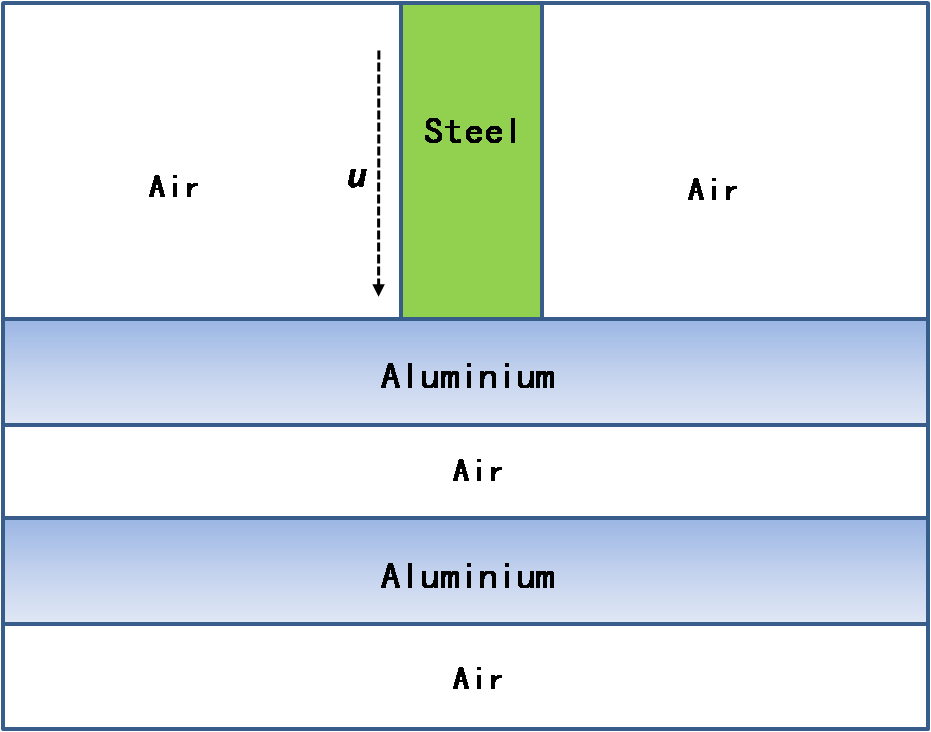}}
\hspace{5mm}
\subfloat[$2.0\times 10^{-7}~\mbox{s}$]
{\includegraphics[width=0.48\textwidth]
{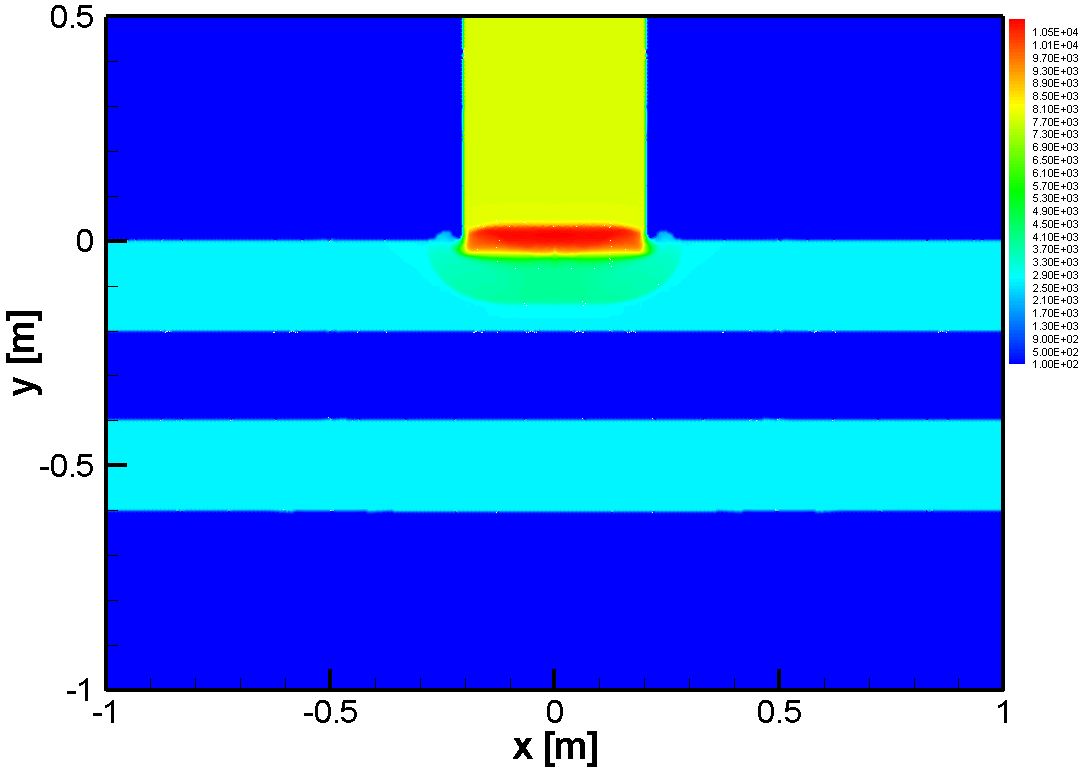}} \\
\vspace{10mm}
\subfloat[$1.0\times 10^{-6}~\mbox{s}$]
{\includegraphics[width=0.48\textwidth]
{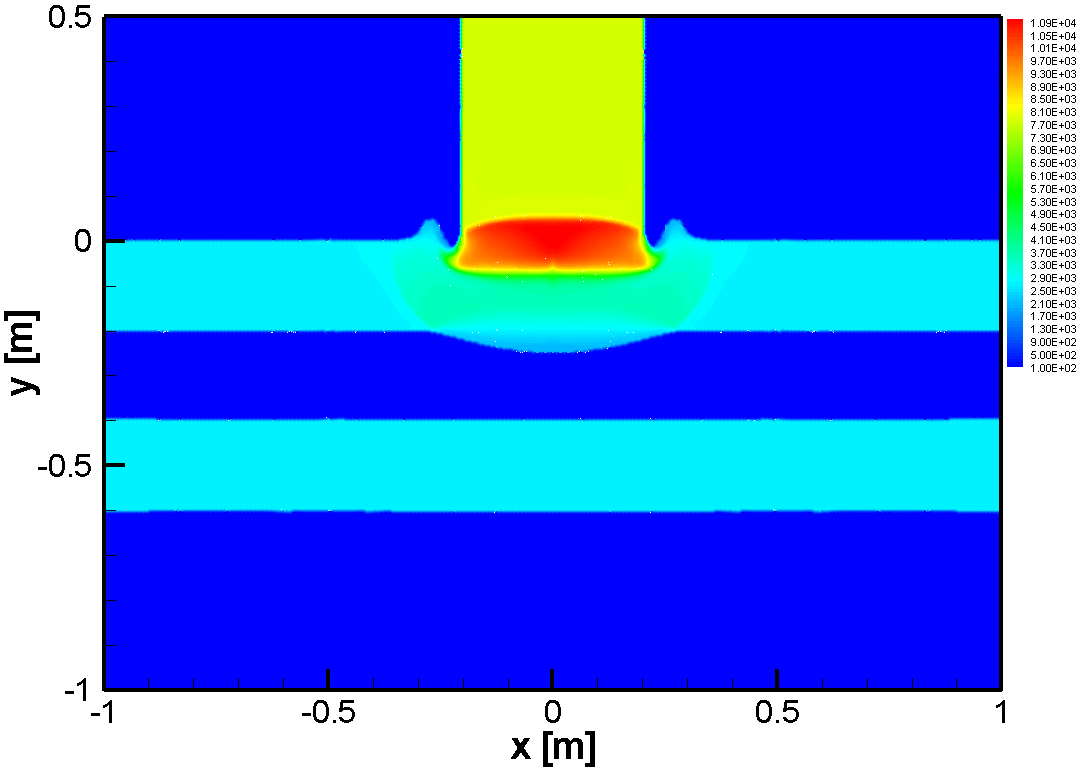}}
\subfloat[$1.8\times 10^{-6}~\mbox{s}$]
{\includegraphics[width=0.48\textwidth]
{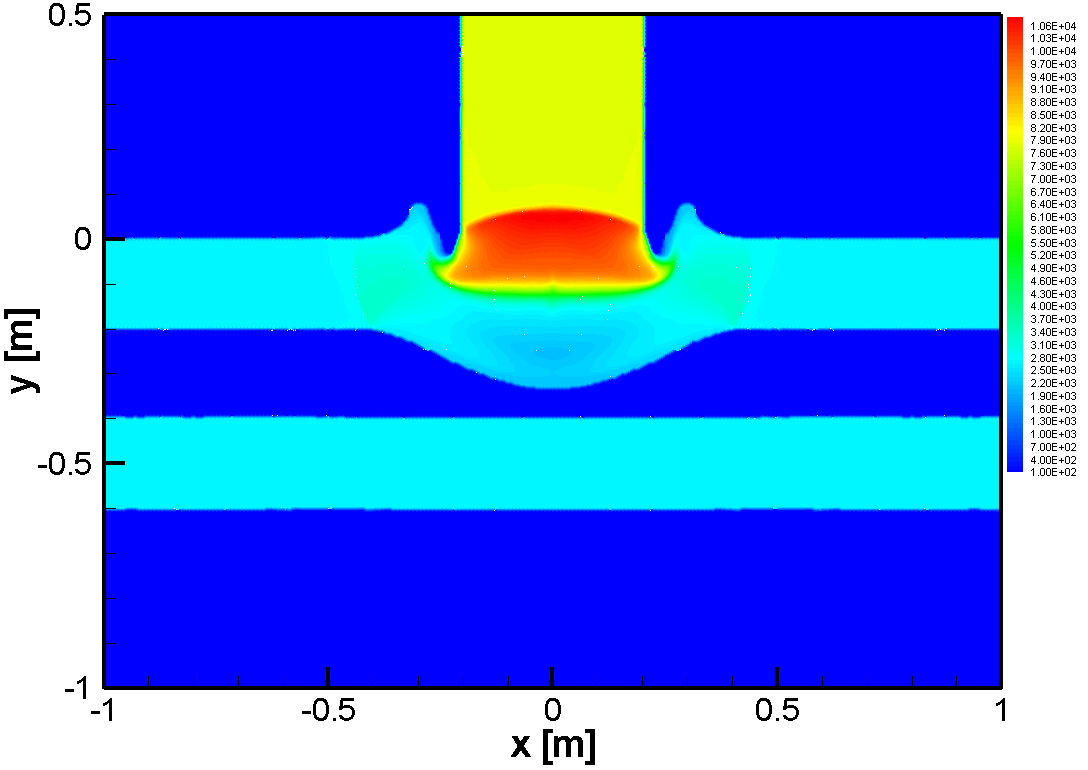}} \\
\vspace{10mm}
\subfloat[$2.5\times 10^{-6}~\mbox{s}$]
{\includegraphics[width=0.48\textwidth]
{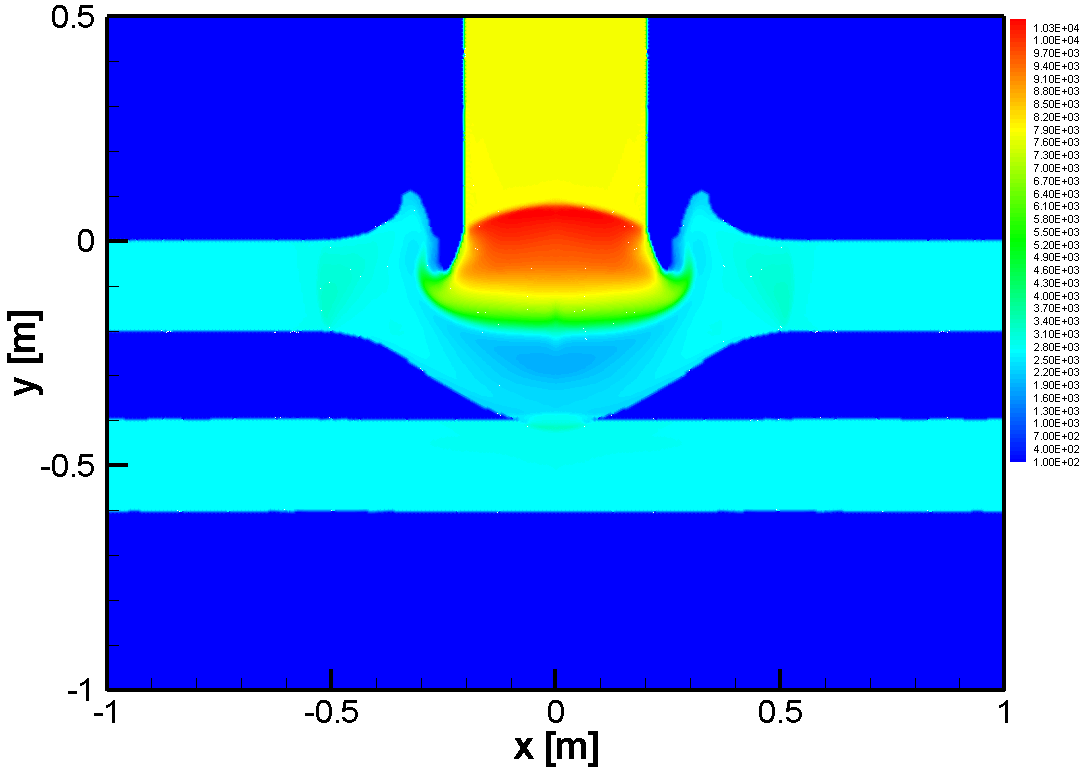}} 
\subfloat[$3.6\times 10^{-6}~\mbox{s}$]
{\includegraphics[width=0.48\textwidth]
{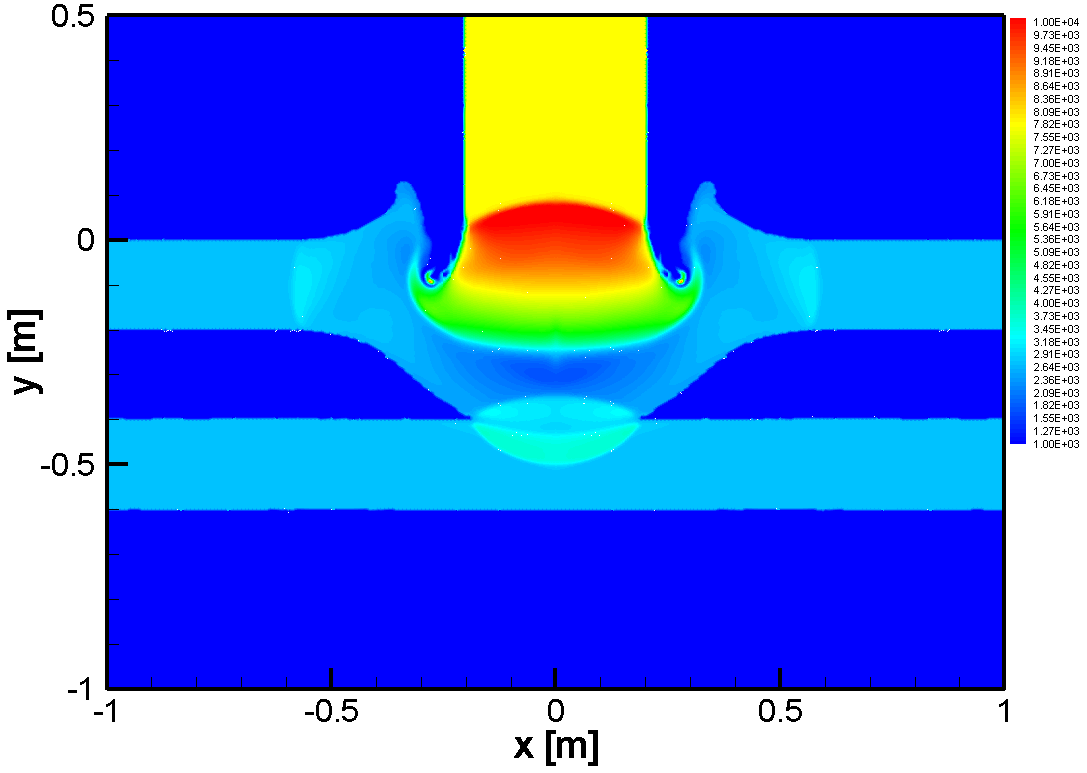}}
\caption{Density contours for high speed impact problems. }
\label{res:impact}
\end{figure}

Fig. \ref{res:impact} (b)--(f) show the density contours of the whole steel and
aluminum at different time. When the steel rod reaches the aluminum, strong
interaction occurs between them. Since the steel has a much higher density and
stiffness, it will lead to the severe deformation and penetration of the
aluminum finally. In the whole calculation, we can see that the interfaces
between each pair of the aluminum, steel and air can be captured sharply,
which show that our numerical scheme can handle the large deformation of
compressible materials and phase interfaces naturally.


\section{Conclusions}\label{sec:conclusion}
We extend the numerical scheme in Guo \textit{et al.} \cite{Guo2016} to the
multi-medium interaction problems that obey a general Mie-Gr{\"u}neisen 
equations of state for the volumetric deformation and hydro-elastoplastic 
constitutive law for the deviatoric deformation. The numerical procedures to 
solve the multi-medium Riemann problem are elaborated. A variety of preliminary 
numerical examples and engineering applications validate our methods. In our 
future work, we will generalize the framework to more complex multiphase 
problems, such as multiphase flow with chemical reaction, heat radiation, and so 
on, which have great initial density and pressure discrepencies and more complex 
physical phenomena.

\section*{Acknowledgments}

The authors appreciate the financial supports provided by the National
Natural Science Foundation of China (Grant No. 91330205, 11421110001,
11421101 and 11325102).

\newpage


\appendix
\renewcommand{\appendixname}{Appendix~\Alph{section}}

\subsubsection*{\bf Ideal gas EOS}

Most of gases can be modeled by the ideal gas law
\begin{equation}
p = (\gamma-1) \rho e,
\label{eq:ideal}
\end{equation}
where $\gamma>1$ is the adiabatic exponent.

\subsubsection*{\bf Stiffened gas EOS}

When considering water under high pressures, the following stiffened 
gas EOS is often used \cite{Rallu2009,Wang2008}:
\begin{equation}
p = (\gamma-1) \rho e-\gamma p_\infty,
\label{eq:stiffened}
\end{equation}
where $\gamma>1$ is the adiabatic exponent, and $p_\infty$ is a
constant. 

\subsubsection*{\bf Murnagham EOS}

Murnagham EOS is widely used in models of solid materials
\begin{equation}
p = \dfrac{K}{\gamma} \left[\left(  
 \dfrac{\rho}{\rho_0} \right)^{\gamma} -1 \right ] + p_0.
\label{eq:murnagham}
\end{equation}
For the steel we adopt the following values
$\rho_0=\unit[7800]{kg/m^3}$, $p_0=\unit[1.0]{\times 10^5
Pa}$, $K=\unit[2.225\times 10^{11}]{Pa}$ and $\gamma=3.7$
\cite{Tang1999, Liu2008}.

\subsubsection*{\bf Polynomial EOS}

The polynomial EOS \cite{Jha2014} can be used to model various
materials
\begin{equation}
p = 
\begin{cases}
  A_1\mu + A_2\mu^2 + A_3\mu^3 + (B_0+B_1\mu)\rho_0 e, & 
  \mu>0, \\
  T_1 \mu  + T_2\mu^2 + B_0 \rho_0e,  &\mu \le 0,
\end{cases}
\label{eq:poly}
\end{equation}
where $\mu = {\rho}/{\rho_0} -1$ and $A_1,A_2,A_3,B_0,B_1,
T_1,T_2,\rho_0$ are positive constants. In this paper, we take an
alternative formulation in the tension branch \cite{Autodyn2003},
where $p=T_1 \mu  + T_2\mu^2 + (B_0+B_1\mu)\rho_0e$ for $\mu\le0$, to
ensure the continuity of the speed of sound at $\mu=0$. Such a formulation  
avoids the occurance of anomalous waves in the Riemann problem, which does not 
exist in real physics. When $B_1\le B_0\le B_1+2$ and $T_1\ge 2T_2$, the 
polynomial EOS satisfies the conditions \textbf{(C1)} and \textbf{(C3)}. In 
addition, if the density $\rho\ge {B_0\rho_0}/{(B_1+2)}$, then the polynomial 
EOS also satisfies the condition \textbf{(C2)}.

\subsubsection*{\bf JWL EOS}

Various detonation products of high explosives can be characterized
by the JWL EOS \cite{Smith1999}
\begin{equation}
p = A_1\left(1-\frac{\omega\rho}{R_1\rho_0}\right)
 \exp\left(-\frac{R_1\rho_0}{\rho}\right) +
 A_2\left(1-\frac{\omega\rho}{R_2\rho_0}\right) \exp\left(-\frac{
 R_2\rho_0}{\rho}\right) + \omega\rho e,
\label{eq:jwl}
\end{equation}
where $A_1,A_2,\omega,R_1,R_2$ and $\rho_0$ are positive constants.
Obviously the JWL EOS \eqref{eq:jwl} satisfies the conditions
\textbf{(C1)}
and \textbf{(C2)}. To enforce the condition \textbf{(C3)} we first
notice that 
\[
\lim_{\rho\rightarrow 0^+} h'(\rho) = 0.
\]
Then it suffices to ensure that $h''(\rho)\ge 0$, which is equivalent
to
the following inequality in terms of $\nu=\rho_0/\rho$:
\[
R_1\nu-2-\omega
\ge G(\nu):=\dfrac{A_2R_2}{A_1R_1}(2+\omega -R_2\nu)
\exp((R_1-R_2)\nu).
\]
A simple calculus shows that the maximum value of the function 
$G(\nu)$ above is given by
\[
\alpha = \dfrac{A_2R_2^2}{A_1R_1(R_1-R_2)}
\exp\left(\dfrac{(2+\omega)(R_1-R_2)-R_2}{R_2}\right).
\]
Therefore a sufficient condition for
\textbf{(C3)} is that the density satisfies
\[
\rho\le \dfrac{R_1}{2+\omega+\alpha}\rho_0,
\]
which is valid for most cases.

\section*{\refname}
\bibliographystyle{unsrt}
\bibliography{reference}
\end{document}